\newtheorem{theorem}{Theorem}
\newtheorem{theorem*}{Theorem}
\newtheorem{lemma}{Lemma}
\newtheorem{proposition}[theorem]{Proposition}
\newtheorem{definition}{Definition}
\newtheorem{remark}{Remark}
\newcommand{\RR}{\mathbb{R}}
\newcommand{\NN}{\mathbb{N}}
\newcommand{\HH}{\mathbb{H}}
\newcommand{\Om}{\Omega}
\numberwithin{equation}{section}
\numberwithin{theorem}{section}
\numberwithin{lemma}{section}
\numberwithin{example}{section}
\begin{document}
    
\title[Sharp fractional Hardy's inequality for half-spaces in $\HH^n$]{Sharp fractional Hardy's inequality for half-spaces in the Heisenberg group}


    \author{Haripada Roy}
    \email{haripada@iitk.ac.in}

    \address{Indian Institute of Technology Kanpur, India}

    \keywords{Heisenberg group; Fractional Sovolev spaces; Fractional Hardy inequality}
    \subjclass[2020]{26D15; 43A80; 46E35}
    \date{}

    \smallskip
    \begin{abstract}
    In this work we establish the following fractional Hardy’s inequality
    $$C\int_{\mathbb{H}^n_+}\frac{|f(\xi)|^p}{x_1^{sp+\alpha}}d\xi\leq \int_{\mathbb{H}^n}\int_{\mathbb{H}^n}\frac{|f(\xi)-f(\xi')|^p}{d({\xi}^{-1}\circ \xi')^{Q+sp}|z'-z|^\alpha}d\xi'd\xi,\ \ \forall\,f\in C_c^\infty(\mathbb{H}^n_+)$$
    for the half-space $\mathbb{H}^n_+=\{\xi=(x,y,t)=(x_1,\ldots,x_n,y_1,\ldots,y_n)\in\mathbb{H}^n:x_1>0\}$ in the Heisenberg group $\mathbb{H}^n$ without any restriction on parameters, and compute the corresponding sharp constant. In a previous joint work, we established a variant of Hardy’s inequality for the same half-space, but with certain parameter restrictions. However, all integrals in that work were considered over half-spaces, and here the seminorm is taken over the entire $\mathbb{H}^n$. Although this inequality holds for all values of the quantity $sp+\alpha$, we are only able to compute the corresponding sharp constant when $sp+\alpha>1$.
    \end{abstract}
    
    \maketitle


\section{Introduction}
Hardy's inequality is a fundamental and extensively studied functional inequality in the theory of partial differential equations. For the Euclidean space $\RR^n$ ($n\geq2$), this inequality takes the form
\begin{equation}
\left|\frac{n-p}{p}\right|^p\int_{\RR^n}\frac{|u(x)|^p}{|x|^p}dx\leq \int_{\RR^n}|\nabla u(x)|^pdx,
\end{equation}
which holds for all $u\in C_c^\infty(\RR^n)$ when $1<p<n$ and for all $u\in C_c^\infty(\RR^n\setminus\{0\})$ when $p>n$. The constant $|\frac{n-p}{p}|^p$ is sharp here. Let $\Omega$ be a domain in $\RR^n$, $n\geq1$, with nonempty boundary, and let us define the distance function from the boundary of $\Omega$ by $$\delta_\Omega(x)=\mathrm{dist}(x,\partial\Omega)=\inf\{|x-y|:y\in\partial\Omega\}.$$ The following variant of Hardy's inequality was established by Alano Ancona in \cite{Ancona} for a sufficiently regular domain $\Omega$:
\begin{equation}\label{bdry har}
C_\Omega\int_\Omega\frac{|u(x)|^2}{\delta_\Omega(x)^2}dx\leq \int_\Omega|\nabla u(x)|^2dx\ \ \forall\,u\in C_c^\infty(\Omega).
\end{equation}
Later, this inequality was generalized by several authors for various domains, such as bounded convex domains, bounded domains with smooth or Lipschitz boundaries, upper half-spaces, and more (see \cite{Hoffmann, Tanya, Tidblom, Tidblom2}). Some of the corresponding sharp constants were also established there.\smallskip

Analogous versions of these inequalities for fractional Sobolev spaces have also been studied for several domains. We refer \cite{Bourgain,Hitchhiker,Frank,Mazya} for introduction of the fractional Sobolev spaces, its connection with the classical spaces, and for the Hardy's inequality for fractional Sobolev spaces. Among the fractional Hardy's inequalities for domains, one of the well-known results was established by B. Dyda in \cite{Dyda}. For $p>0$ and $0<s<1$, he established the following inequality:
\begin{equation}\label{eq Dyda}
C\int_\Omega\frac{|u(x)|^p}{\delta_\Omega(x)^{sp}}dx\leq \int_\Omega\int_\Omega\frac{|u(x)-u(y)|^p}{|x-y|^{n+sp}}dydx\ \ \forall\,u\in C_c^\infty(\Omega),
\end{equation}
where $\Omega$ is either a bounded Lipschitz domain or its complement, a domain above the graph of a Lipschitz function $\RR^{n-1}\rightarrow\RR$, or the complement of a point, with certain conditions imposed on the quantity $sp$.\smallskip

This inequality for the critical case $sp=1$ is addressed in \cite{Purbita_Adi_Pro} for dimension one and in \cite{Adi_Vivek_Pro} for higher dimensions. Inequality \eqref{eq Dyda} for the upper half-space $\RR^n_+=\{x\in\RR^n:x_n>0\}$ and the corresponding sharp constant
\begin{equation}\label{best const}
C_{n,p,s}:=2\pi^{\frac{n-1}{2}}\frac{\Gamma((1+sp)/2)}{\Gamma((n+sp)/2)}\int_0^1\left|1-r^{(sp-1)/p}\right|^p\frac{dr}{(1-r)^{1+sp}}
\end{equation}
was first established by Bogdan and Dyda in \cite{Dyda2} for the case $p=2$ under the condition $sp\neq1$. The results for general $p$ were later extended by Frank and Seiringer in \cite{Frank2}. L. Brasco and E. Cinti established the following variant of the fractional Hardy's inequality for convex domains $(\neq\RR^n)$ in \cite{BraCin} without any restriction on the quantity $sp$:
\begin{equation}\label{brasco hardy}
\frac{C}{s(1-s)}\int_\Omega\frac{|u(x)|^p}{\delta_\Omega(x)^{sp}}dx\leq \int_{\RR^n}\int_{\RR^n}\frac{|u(x)-u(y)|^p}{|x-y|^{n+sp}}dydx\ \ \forall\,u\in C_c^\infty(\Omega),
\end{equation}
where $1<p<\infty$, $0<s<1$ and $C=C(n,p)>0$. Later, Brasco et al. in \cite{Brasco2} established the corresponding sharp constant of the above inequality for the following cases:\\
(1) $\Omega$ is the half-space $\RR^n_+:=\{(x_1,\ldots,x_n)\in\RR^n:x_n>0\}$, and $0<s<1$.\\
(2) $\Omega\subsetneq\RR^n$ is an open convex set and $1/p<s<1$.\\
(3) $\Omega\subsetneq\RR^n$ is an open convex set, $p=2$ and $0<s<1$.\\
The constant is same for all the above cases, which is
\begin{equation}\label{sharp c}
\mathfrak{h}_{s,p}(\Omega)=C_{n,s,p}\Lambda_{s,p},
\end{equation}
where
\begin{equation}
C_{n,s,p}=
\begin{cases}
|S_{n-1}|\int_0^\infty r^{n-2}(1+r^2)^{-\frac{n+sp}{2}}dr, &\text{for $n\geq2$},\\
1 &\text{for $n=1$},
\end{cases}
\end{equation}
$S_{n-1}$ is the unit sphere in $\RR^n$, and
\begin{equation}\label{Lambda}
\Lambda_{s,p}=2\int_0^1\frac{\left|1-t^{\frac{sp-1}{p}}\right|^p}{(1-t)^{1+sp}}dt+\frac{2}{sp}.
\end{equation}\smallskip

The goal of this paper is to establish a Hardy's inequality similar to \eqref{brasco hardy} and the corresponding sharp constant for half-spaces in the Heisenberg group. Before going to the main results, let us recall needed background on the Heisenberg group. The Heisenberg group $\HH^n$, $n\geq1$, is defined by
$$\HH^n:=\{\xi=(z,t)=(x,y,t):z=(x,y)\in \RR^n\times \RR^n\ \mathrm{and}\ t\in \RR\},$$
with the following group law:
$$\xi\circ\xi'=(x+x',y+y',t+t'+2\langle y,x'\rangle-2\langle x,y'\rangle),$$
where $\xi=(x,y,t)$, $\xi'=(x',y',t')\in \HH^n$ and $\langle\cdot,\cdot\rangle$ denotes the usual Euclidean inner product in $\RR^n$. One can easily check that $0\in\HH^n$ is the identity element and $-\xi=(-z,-t)$ is the inverse of $\xi\in\HH^n$. A basis for the left invariant vector fields is given by
$$X_i=\frac{\partial}{\partial x_i}+2y_i\frac{\partial}{\partial t},\ \ 1\leq i\leq n,$$
$$Y_i=\frac{\partial}{\partial y_i}-2x_i\frac{\partial}{\partial t},\ \ 1\leq i\leq n,$$
$$T=\frac{\partial}{\partial t}.$$
For any function $u$ of $C^1$ class, $\nabla_{\HH^n}(u):=\left(X_1(u),\ldots,X_n(u),Y_1(u),\ldots,Y_n(u)\right)$ is the associated Heisenberg sub-gradient of $u$ and
$$|\nabla_{\HH^n}(u)|^2=\sum_{i=1}^{n}\left(|X_i(u)|^2+|Y_i(u)|^2\right).$$
The homogeneous norm for $\xi\in\HH^n$ is defined by
$$d(\xi)=\left((|x|^2+|y|^2)^2+t^2\right)^{\frac{1}{4}}=\left(|z|^4+t^2\right)^{\frac{1}{4}}.$$
$d(\xi)$ is called the Koranyi-Folland non-isotropic gauge. The left-invariant Haar measure on $\HH^n$ is the Lebesgue measure on $\RR^{2n+1}$. For any measurable set $A\subset\HH^n$, $|A|$ denotes the Lebesgue measure of $A$. The dilation on $\HH^n$ is defined by
$$\delta_r\xi:=(rx,ry,r^2t),\ \ \mathrm{for}\ \xi=(x,y,t)\in \mathbb{H}^n\ \mathrm{and}\ r>0.$$
The determinant of the Jacobian matrix of $\delta_r$
is $r^Q$, where $Q=2n+2,$ is called the homogeneous dimension of $\mathbb{H}^n$. A ball in $\HH^n$, centered at $\xi$ with radius $r$ is defined by
\begin{equation}\label{set B_r}
B(\xi,r)=\{\xi'=(z',t'):d({\xi}^{-1}\circ \xi')<r\}.
\end{equation}
For $0<r<R$, the following set is an infinite strip between $x_1=r$ and $x_1=R$: 
\begin{equation}\label{set D_r,R}
\mathscr{D}_{r,R}=\{\xi=(x_1,\ldots,x_n,y,t)\in\HH^n:r<x_1<R\}.
\end{equation}\smallskip

The distance function for a domain $\Omega\subset\HH^n$ with non-empty boundary is defined by
$$\delta_\Omega(\xi)=\begin{cases}
\underset{\xi'\in\partial\Omega}{\mathrm{inf}}d(\xi^{-1}\circ\xi') & \text{if $\xi\in\Omega$}\\
0 & \text{if $\xi\in\HH^n\setminus\Omega$}.
\end{cases}$$
Let us define the following half-spaces in $\HH^n$:
$$\HH^n_{+}:=\{\xi=(x,y,t)=(x_1,\ldots,x_n, y_1,\ldots,y_n,t)\in\HH^n:x_1>0\}$$
and
$$\HH^n_{+,t}:=\{\xi=(x,y,t)\in\HH^n:t>0\}.$$
Considering the point $\xi'=(0,x_2,\ldots,x_n,y,t-2x_1y_1)\in\partial\HH^n_+$, it is easy to see that $\delta_{\HH^n_{+}}(\xi)=x_1$ for $\xi\in\HH^n_+$, whereas the distance function $\delta_{\HH^n_{+,t}} (\xi) \leq \sqrt{t}$ for the half-space $\HH^n_{+,t}$. In fact, the distance of the point $\xi=(0,0,t)\in \HH^n_{+,t}$ from the boundary of $\HH^n_{+,t}$ is $\sqrt{t}$, but the value could be strictly less than $\sqrt{t}$ for $\xi=(x,y,t)$ in some subsets of $\HH^n_{+,t}$, see the Appendix in \cite{Hari3}. Moreover, there is no known specific form for the distance function $\delta_{\HH^n_{+,t}}(\xi)$.\smallskip

Hardy's inequality for the Heisenberg group was first established by Garofalo and Lanconelli in \cite{Garofalo}. Numerous generalizations of this inequality have been made by various authors. The following inequality is one of the general version, proved by L. D'Ambrozio in \cite{D'Ambrosio}:
\begin{equation}\label{eq D'Ambro}
\int_{\Omega}\frac{|z|^\beta}{d(\xi)^\alpha}|u(\xi)|^pd\xi\leq C_{n,p,\alpha,\beta}^p\int_{\Omega}|\nabla_{\HH^n}(u)|^p |z|^{\beta-p}d(\xi)^{2p-\alpha},\ \forall u\in C_c^{\infty}(\Omega),
\end{equation}
where $\Omega$ is an open subset of $\HH^n$, $n\geq1$, $p>1$, $\alpha,\beta\in\RR$ satisfy $Q>\alpha-\beta$ and $2n>p-\beta$ and $C_{n,p,\alpha\beta}=\frac{p}{Q+\beta-\alpha}$ is the optimal constant.\smallskip

Inequality \eqref{eq D'Ambro} was proved for $\Omega=\HH^n$ with $\alpha=2p$ and $\beta=p$ in \cite{Pengcheng3}. Adimurthi and Sekar proved \eqref{eq D'Ambro} with $\alpha=2p$ and $\beta=2$, for $u\in FS_0^{1,p}(\HH^n)$ in \cite{Adi_Sekar_2006}, where the space $FS_0^{1,p}(\HH^n)$ is the completion of $C_c^{\infty}(\HH^n)$ under the norm
\begin{equation}\label{FS norm}
|u|^p_{1,p}=\int_{\HH^n}\frac{|\nabla_{\HH^n}(u)|^p}{|z|^{p-2}}dzdt.
\end{equation}
For other generalizations of this inequality, we refer to the works in \cite{Pengcheng,Larson,Luan,Xiao}.\bigskip

Let $1\leq p<\infty$, $0<s<1$, $\alpha\geq0$, and $\Omega\subset\HH^n$ be an open set. Let us define the following fractional seminorm analogous to \eqref{FS norm}: 
\begin{equation}\label{seminorm}
[f]_{s,p,\alpha,\Omega}:=\left(\int_{\Omega}\int_{\Omega}\frac{|f(\xi)-f(\xi')|^p}{d({\xi}^{-1}\circ \xi')^{Q+sp}|z-z'|^{\alpha}}d\xi' d\xi\right)^{\frac{1}{p}},
\end{equation}
and the fractional Sobolev space
$$W^{s,p,\alpha}(\Omega):=\left\{f\in L^p(\Omega):[f]_{s,p,\alpha,\Omega}<\infty\right\}.$$
Here the norm is defined by $\lVert f\rVert_{s,p,\alpha,\Omega}^p=\lVert f\rVert_{L^p(\Omega)}^p+[f]_{s,p,\alpha,\Omega}^p$. The space $W_0^{s,p,\alpha}(\Omega)$ is defined as the closure of $C_c^\infty(\Omega)$ with respect to the norm $\lVert f\rVert_{s,p,\alpha,\Omega}$. When $\alpha=0$, the above notations are simplified to $[f]_{s,p,\Omega}$, $\lVert f\rVert_{s,p,\Omega}$, $W^{s,p}(\Omega)$, and $W_0^{s,p}(\Omega)$, respectively. Hardy's inequality for the space $W_0^{s,p,\alpha}(\Omega)$ takes the form
\begin{equation}\label{eq adi}
C_{n,s,p,\alpha} \int_\Omega\frac{|f(\xi)|^p}{\delta_{\Omega}(\xi)^{sp}|z|^{\alpha}}d\xi\leq [f]_{s,p,\alpha,\Omega}^p.
\end{equation}
When $\Om=\HH^n$, the distance function $\delta_{\Omega}(\xi)$ is replaced by $d(\xi)$. The space $W_0^{s,p,\alpha}(\HH^n)$ was first introduced in \cite{Adi-Mallick}. This space is trivial when $\alpha\geq Q-2$, and if $\alpha<\min\{Q-2,p(1-s)\}$, then $C_c^\infty(\HH^n)\subset W_0^{s,p,\alpha}(\HH^n)$ (see Section 3 in \cite{Adi-Mallick}). Thus, throughout this paper, we consider $\alpha<Q-2$. Inequality \eqref{eq adi} for this space was established in \cite{Adi-Mallick} under the condition $sp+\alpha<Q$ together with $sp>2$. Recently, the range of the indices for this inequality has been improved as a particular case of fractional Caffarelli-Kohn-Nirenberg type inequalities in \cite{Hari}.\smallskip

In a previous joint work \cite{Hari3}, we established the fractional Hardy inequality \eqref{eq adi} for the half-space $\HH^n_+$ under conditions $sp>1$ and $\alpha\geq (Q-2+sp)/2$. In this paper, following the techniques in \cite{BraCin} and \cite{Brasco2}, we establish a fractional Hardy's inequality analogous to \eqref{brasco hardy}, for the half-space $\HH^n_+$, without any restriction on the product $sp$, along with the corresponding sharp constant. The inequality is as follows:

\begin{theorem}\label{thm 1.1}
Let $1<p<\infty$ and $0<s<1$. There exists a constant $C=C(n,p)>0$ depending only on $n$ and $p$, such that
\begin{equation}\label{eq main}
\frac{C}{s(1-s)}\int_{\HH^n_+}\frac{|f(\xi)|^p}{x_1^{sp}}d\xi\leq \int_{\HH^n}\int_{\HH^n}\frac{|f(\xi)-f(\xi')|^p}{d({\xi}^{-1}\circ \xi')^{Q+sp}}d\xi' d\xi\ \ \forall\,f\in C_c^\infty(\HH^n_+),
\end{equation}
where $\xi=(x_1,\cdots x_n,y_1,\cdots,y_n,t)$.
\end{theorem}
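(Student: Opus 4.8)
\emph{Strategy.} I would treat separately the regime where $s$ is bounded away from $1$ and the regime where $s$ is close to $1$, using that $\tfrac1{s(1-s)}\asymp_p\tfrac1s$ on $(0,1-\delta_0]$ and $\tfrac1{s(1-s)}\asymp_p\tfrac1{1-s}$ on $[\delta_0,1)$ for a fixed small $\delta_0=\delta_0(p)$. Thus it suffices to prove (a) a lower bound $\gtrsim\tfrac1s\int_{\HH^n_+}|f(\xi)|^p x_1^{-sp}\,d\xi$ for \emph{all} $s\in(0,1)$, and (b) a lower bound $\gtrsim\tfrac1{1-s}\int_{\HH^n_+}|f(\xi)|^p x_1^{-sp}\,d\xi$ for $s\in(1-\delta_0,1)$; (a) will come from the ``crossing'' part of the double integral, computed exactly, and (b) from a reduction to the Euclidean fractional Hardy inequality on a half-space. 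One may and does use the invariance of \eqref{eq main} under the anisotropic dilations $\delta_r$ to normalise the scale of $\operatorname{supp}f$.

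\emph{The crossing part.} Split $[f]_{s,p,\HH^n}^p=[f]_{s,p,\HH^n_+}^p+2R$, where, since $f\equiv0$ off $\HH^n_+$,
\[
R=\int_{\HH^n_+}|f(\xi)|^p\int_{\HH^n\setminus\HH^n_+}\frac{d\xi'}{d(\xi^{-1}\circ\xi')^{Q+sp}}\,d\xi .
\]
Changing variables $\xi'=\xi\circ\eta$ (left-invariance of Haar measure) turns $\xi'\notin\HH^n_+$ into $\eta_1\le -x_1$, and then $\eta=\delta_{x_1}\zeta$ together with $d(\delta_{x_1}\zeta)=x_1 d(\zeta)$, $|\det D\delta_{x_1}|=x_1^{Q}$ gives
\[
R=\mathfrak{c}(n,s,p)\int_{\HH^n_+}\frac{|f(\xi)|^p}{x_1^{sp}}\,d\xi,\qquad \mathfrak{c}(n,s,p)=\int_{\{\zeta_1\le-1\}}\frac{d\zeta}{d(\zeta)^{Q+sp}}\in(0,\infty).
\]
Finiteness is immediate ($d(\zeta)\ge1$ on $\{\zeta_1\le-1\}$, and at infinity the integrand is $d(\zeta)^{-(Q+sp)}$ with $sp>0$), and one checks that $s\mapsto s\,\mathfrak{c}(n,s,p)$ is bounded below on $(0,1]$, its value as $s\to0^+$ being positive because of the borderline logarithmic divergence at infinity. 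Since $[f]_{s,p,\HH^n}^p\ge2R$ this proves (a), hence the theorem for $s\le1-\delta_0$.

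\emph{Reduction to $\RR^{2n}$.} Write $\xi=(z,t)$, $z=(x,y)\in\RR^{2n}$, so $d(\xi^{-1}\circ\xi')^4=|z-z'|^4+\bigl(t-t'-2\langle y,x'\rangle+2\langle x,y'\rangle\bigr)^2$ and $x_1=z_1$. For fixed $z,z'$, integrating the kernel in $t'$ yields $\int_\RR(|z-z'|^4+\tau^2)^{-(Q+sp)/4}\,d\tau=c_1(n,s,p)\,|z-z'|^{-(2n+sp)}$ with $c_1=\int_\RR(1+u^2)^{-(Q+sp)/4}\,du$ bounded above and below for $s\in(0,1)$. Using $|f(z,t)-f(z',t')|^p\ge2^{1-p}|f(z,t)-f(z',t)|^p-|f(z',t)-f(z',t')|^p$ and integrating out $t'$ in the first term,
\[
[f]_{s,p,\HH^n}^p\ \ge\ 2^{1-p}c_1\int_\RR[f(\cdot,t)]_{s,p,\RR^{2n}}^p\,dt\ -\ E,
\]
where $E=\int_{\HH^n}\int_{\RR^{2n}}\int_\RR d(\xi^{-1}\circ\xi')^{-(Q+sp)}\,|f(z',t)-f(z',t')|^p\,dt'\,dz'\,d\xi$. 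To the main term I apply \eqref{brasco hardy} with the convex set $\Omega=\{z_1>0\}\subset\RR^{2n}$ — valid there with constant $\gtrsim\tfrac1{s(1-s)}$ and no restriction on $sp$, since $f(\cdot,t)\in C_c^\infty(\{z_1>0\})$ — and re-integrate in $t$, obtaining $2^{1-p}c_1\int_\RR[f(\cdot,t)]_{s,p,\RR^{2n}}^p\,dt\gtrsim\tfrac1{s(1-s)}\int_{\HH^n_+}|f|^p x_1^{-sp}$. Hence, once $E$ is controlled, (b) follows.

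\emph{The main obstacle: absorbing $E$.} Unwinding $E$ — using that the Heisenberg symplectic form vanishes on the diagonal, $2\langle y,x\rangle-2\langle x,y\rangle=0$, so that the affine function of $z$ appearing in the kernel is actually constant — one finds $E\asymp\int_{\RR^{2n}}[f(z,\cdot)]_{W^{\theta,p}(\RR)}^p\,dz$, a fractional seminorm of $f$ of order $\theta=\tfrac{sp-1}{2p}$ in the central variable $t$ alone (plus a genuinely lower-order term). This quantity is \emph{not} dominated by $\int_{\HH^n_+}|f|^p x_1^{-sp}$, so $E$ cannot be discarded; it must be absorbed into $[f]_{s,p,\HH^n}^p$. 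The point is that the central direction has homogeneous degree $2$: restricting the double integral to the parabolic region $\{d(\xi^{-1}\circ\xi')\asymp|t-t'|^{1/2}\}$ of aperture $\varepsilon_0$ and again splitting by the triangle inequality shows that $[f]_{s,p,\HH^n}^p$ controls a fixed multiple of $\int_{\RR^{2n}}[f(z,\cdot)]_{W^{s/2,p}(\RR)}^p\,dz$, up to error terms of the same type but carrying a factor $\varepsilon_0^{2n+sp}\ll\varepsilon_0^{2n}$, which are therefore reabsorbed for $\varepsilon_0$ small. Since $\tfrac s2-\theta=\tfrac1{2p}$ is a \emph{positive gap independent of $s$}, a compactly supported $W^{s/2,p}$ function on $\RR$ controls its $W^{\theta,p}$ seminorm with a constant uniform in $s\in(1-\delta_0,1)$; chaining these estimates yields $E\le C(n,p)\,[f]_{s,p,\HH^n}^p$ with $C(n,p)$ independent of $s$, and the inequality for $s$ near $1$ follows by absorption. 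Carrying out this argument with care — tracking the nested error terms, the smallness of $\varepsilon_0$, and the uniformity in $s$ of every constant, and using the scale-invariance to remove the dependence on $\operatorname{supp}f$ — is the technical heart of the proof.
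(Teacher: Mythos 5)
Your part (a) is essentially the paper's own ``small $s$'' step: the paper also isolates the crossing contribution $\int_{\HH^n_+}|f|^p\big(\int_{(\HH^n_+)^C}d(\xi^{-1}\circ\xi')^{-Q-sp}d\xi'\big)d\xi$ and evaluates it in Heisenberg polar coordinates to get a lower bound of order $\tfrac1s\int_{\HH^n_+}|f|^px_1^{-sp}d\xi$; that portion of your argument is correct and complete.

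Part (b) is a genuinely different route from the paper, and it has a real gap. After the reverse triangle inequality and integration in $t'$, everything hinges on the error term $E$, and you have not actually controlled it. Two specific problems. First, the claim that ``the affine function of $z$ appearing in the kernel is actually constant'' is false: in $E$ the kernel is $\big(|z-z'|^4+(t'-t-2\omega(z-z',z'))^2\big)^{-(Q+sp)/4}$ with $\omega(w,z')=\langle w_y,x'\rangle-\langle w_x,y'\rangle$, and this twist is a nonconstant linear function of the integration variable $z$ whose gradient grows like $|z'|$; it cannot be dropped. Carrying out the $z$--integration, for $|z'|\lesssim 1$ one gets a kernel $\asymp|t-t'|^{-1-sp/2}$ (order $s/2$, not your $\theta=\tfrac{sp-1}{2p}$), while for $|z'|\gtrsim1$ the twist produces a kernel $\asymp|z'|^{-1}|t-t'|^{-(1+sp)/2}$; so $E$ is an inhomogeneous mixture and in the regime where your exponent is correct it carries an extra weight in $z'$. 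In particular the advertised ``positive gap $\tfrac s2-\theta=\tfrac1{2p}$ independent of $s$'' is not available where it is needed (near $z'=0$ the gap is zero), and the absorption scheme as described does not close. Second, even granting the exponent, the inequality $[f]_{s,p,\HH^n}^p\gtrsim\int_{\RR^{2n}}[f(z,\cdot)]_{W^{s/2,p}(\RR)}^p\,dz$ that you invoke is itself a nontrivial vertical-slicing lemma for the anisotropic seminorm (restricting to a parabolic region still leaves increments with $z\neq z'$ in the numerator), asserted here with uniform-in-$s$ constants but not proved. Since the whole point of part (b) is the constant's behaviour as $s\to1$, these unproved uniform estimates are exactly where the difficulty lies.

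For comparison, the paper avoids slicing entirely: it proves that $\xi\mapsto x_1^s$ is weakly harmonic for the nonlocal operator on $\HH^n_+$ (Proposition \ref{prop 3.1}), inserts the ground-state test function $\phi=|f|^p/(x_1^s+\epsilon)^{p-1}$, and uses the pointwise inequalities of Lemmas \ref{lemma 4.1}--\ref{lemma 4.2} together with the cone estimate of Lemma \ref{lemma 3.3} to obtain the constant $\tfrac{s^pC_1(n,p)}{1-s}$ for \emph{all} $s\in(0,1)$; combined with the crossing bound this yields $\tfrac{C}{s(1-s)}$. If you want to salvage your approach, the missing ingredients are a proof of the vertical slicing inequality with constants uniform in $s$ and a genuine estimate of $E$ that accounts for the symplectic twist; alternatively, adopting the harmonicity of $x_1^s$ replaces all of part (b).
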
\smallskip

The right-hand side of the above inequality is the seminorm \eqref{seminorm} without the index $\alpha$. Since we are interested in establishing an inequality for the space $W^{s,p,\alpha}$, the following version of Hardy's inequality precisely has the seminorm \eqref{seminorm} on the right-hand side:
\begin{theorem}\label{thm alpha>0}
Let $1<p<\infty$, $0<s<1$, and $\alpha\geq0$. There exists a constant $C=C(n,p,\alpha)>0$ depending only on $n$, $p$ and $\alpha$, such that
\begin{equation}\label{eq alpha>0}
\frac{C}{(s+\frac{\alpha}{p})(1-s)}\int_{\HH^n_+}\frac{|f(\xi)|^p}{x_1^{sp+\alpha}}d\xi\leq [f]_{s,p,\alpha,\HH^n}^p.\ \ \forall\,f\in C_c^\infty(\HH^n_+),
\end{equation}
where $\xi=(z,t)=(x_1,\cdots x_n,y_1,\cdots,y_n,t)$.
\end{theorem}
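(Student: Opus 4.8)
The plan is to deduce Theorem~\ref{thm alpha>0} from Theorem~\ref{thm 1.1} by a reduction that absorbs the extra weight $|z-z'|^{-\alpha}$ into the half-space distance. The key geometric observation is that on the domain of integration we may localize: since $f\in C_c^\infty(\HH^n_+)$ and the left-hand side only sees points of $\HH^n_+$, the nontrivial contribution comes from pairs $\xi=(x,y,t)\in\HH^n_+$ together with $\xi'$ ranging over $\HH^n$. For such $\xi$ we want to bound $|z-z'|^{-\alpha}$ from below by a constant times $x_1^{-\alpha}$ \emph{after} integrating in the directions that do not affect the one-dimensional Hardy structure. This is exactly the mechanism used in \cite{Hari3} and \cite{Adi-Mallick}: one foliates $\HH^n$ by the lines in the $x_1$-direction, writes the homogeneous gauge in terms of $x_1$, the remaining $2n$ coordinates $w=(x_2,\dots,x_n,y_1,\dots,y_n)$, and $t$, and reduces the double integral to a one-dimensional fractional Hardy inequality on the half-line with an explicit kernel, at which point the extra factor $|z-z'|^{-\alpha}\ge C\,x_1^{-\alpha}$ (valid once one integrates out the transverse variables, or after a scaling argument) can be inserted.

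Concretely, I would proceed as follows. \textbf{Step 1:} Reduce to a one-dimensional inequality. Following the slicing argument behind \eqref{brasco hardy} and Theorem~\ref{thm 1.1}, fix all coordinates of $\xi$ except $x_1$ and all coordinates of $\xi'$ except $x_1'$; the gauge $d(\xi^{-1}\circ\xi')$ depends on the differences of coordinates in a way homogeneous under the dilation $\delta_r$. A change of variables making the transverse differences dimensionless turns $\int_{\HH^n}\int_{\HH^n}$ into an integral over $(x_1,x_1')\in(0,\infty)^2$ against the kernel obtained by integrating the gauge over the transverse variables, times the extra weight. \textbf{Step 2:} Handle the weight $|z-z'|^{-\alpha}$. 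Since $|z-z'|^2=|x-x'|^2+|y-y'|^2\ge (x_1-x_1')^2$ is \emph{not} enough to produce $x_1^{-\alpha}$ directly, instead use that on the region contributing to the Hardy term one may further restrict $\xi'$ to a ball of radius comparable to $x_1$ around $\xi$ (the far-away contribution only helps), whence $|z-z'|\le C\,x_1$, giving $|z-z'|^{-\alpha}\ge C\,x_1^{-\alpha}$ on that region. \textbf{Step 3:} Apply Theorem~\ref{thm 1.1} (or its one-dimensional core) to the resulting integral with the additional factor $x_1^{-\alpha}$ carried along, which replaces $x_1^{-sp}$ by $x_1^{-sp-\alpha}$ on the left and modifies the $s$-dependence of the constant from $s(1-s)$ to $(s+\alpha/p)(1-s)$, exactly as in \eqref{Lambda}-type computations where the exponent $(sp-1)/p$ becomes $(sp+\alpha-1)/p$.

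The main obstacle will be \textbf{Step 2}: one cannot simply pointwise bound $|z-z'|^{-\alpha}$ below by $x_1^{-\alpha}$ without losing the contribution from $\xi'$ with $|z-z'|\gg x_1$, and that far-field part is genuinely needed to get a finite (sharp-order) constant when $sp+\alpha$ is small. The correct fix is to run the one-dimensional reduction first so that the transverse variables are integrated out against the gauge kernel, producing a one-dimensional kernel $K(x_1,x_1')$ that already incorporates $|z-z'|^{-\alpha}$; one must then verify that this kernel still has the homogeneity $K(\lambda x_1,\lambda x_1')=\lambda^{-1-sp-\alpha}K(x_1,x_1')$ and satisfies the integrability needed for the Brasco--Cinti-type argument, with the constant behaving like $1/((s+\alpha/p)(1-s))$. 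Checking the endpoint behavior of this transverse integral as $s\to0$ and $s\to1$ (and as $\alpha$ enters) is where the careful estimate lies; the condition $\alpha<Q-2$ guarantees convergence of the transverse integral. Once that kernel estimate is in place, the remaining argument is a verbatim repetition of the proof of Theorem~\ref{thm 1.1}, so I would phrase Step~3 as an appeal to the one-dimensional lemma underlying Theorem~\ref{thm 1.1} applied with the shifted exponent.
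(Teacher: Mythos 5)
Your plan goes in a genuinely different and much harder direction than the paper, and as written it has real gaps. The paper's actual proof of Theorem \ref{thm alpha>0} is a short reduction to Theorem \ref{thm 1.1}: since $|z-z'|\le d(\xi^{-1}\circ\xi')$, the weighted seminorm $[g]_{s,p,\alpha,\HH^n}^p$ dominates the \emph{unweighted} Gagliardo-type energy with exponent $Q+sp+\alpha$; one then chooses new parameters $(s',p')$ with $s'p'=sp+\alpha$, $0<s'<1$, $p\le p'\le p(1+\alpha)$ (two cases according to whether $s\ge 1/p$), applies Theorem \ref{thm 1.1} with $(s',p')$ to the auxiliary function $f=|g|^{p/p'}$, and uses the elementary inequality $\bigl||a|^{t}-|b|^{t}\bigr|^{1/t}\le|a-b|$ for $0<t\le1$ to return to $g$. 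This reparametrization is what produces the constant $\frac{C}{(s+\alpha/p)(1-s)}$ and, importantly, avoids any extra hypothesis such as $\alpha<p(1-s)$. Your proposal never identifies this mechanism.

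The concrete gaps in your route are the following. First, your Step 1 (``integrate out the transverse variables to get a one-dimensional kernel $K(x_1,x_1')$'') is exactly the reduction the paper explains \emph{cannot} be carried out for a general $f\in C_c^\infty(\HH^n_+)$: the gauge $d(\xi^{-1}\circ\xi')$ does not factor into an $x_1$-part and a transverse part because of the twisting term $t'-t-2\langle y,x'\rangle+2\langle x,y'\rangle$ in the group law. The transverse integration (Lemma \ref{lemma 5.1}) produces a clean kernel $|x_1-x_1'|^{-(1+sp+\alpha)}$ only when the integrand depends on $x_1$ alone, which is why in Section \ref{sec 5} it is applied to $u_\beta(\xi)=(x_1)_+^\beta$ and to the special product test functions $\eta(x_1)\Psi_{R_0}(\xi)$, not to arbitrary $f$. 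Second, your Step 3 asserts that ``the remaining argument is a verbatim repetition of the proof of Theorem \ref{thm 1.1}.'' It is not: the engine of that proof is Proposition \ref{prop 3.1}, the weak harmonicity of $(\delta_{\HH^n_+})^s$ for the \emph{unweighted} operator. Carrying the weight $|z-z'|^{-\alpha}$ through requires the weighted analogue --- that a suitable power of $x_1$ is weakly superharmonic for the operator $\mathcal{L}$ with $\alpha>0$ --- which is essentially Theorem \ref{thm 5.3}; establishing it needs the cutoff functions $u_{\beta,R_0}$, the integrability results Lemma \ref{lemma 5.3}, and the restriction $\alpha<\min\{Q-2,p(1-s)\}$, none of which appear in your outline and the last of which is not assumed in the statement you are proving. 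Your observation that $|z-z'|^{-\alpha}\ge x_1^{-\alpha}$ on $B(\xi,x_1)$ does correctly handle the analogue of Lemma \ref{lemma 3.3}, but that is only the lower-bound half of the argument; inequality \eqref{eq 4.8} requires the harmonicity identity over all of $\HH^n\times\HH^n$, which you have not supplied for the weighted kernel. In short, the proposal is not a proof: either fill in the weighted harmonicity machinery of Section \ref{sec 5}, or (far more simply) use the pointwise bound $|z-z'|\le d(\xi^{-1}\circ\xi')$ and the exponent substitution described above.
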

Inequality \eqref{eq adi} for the half-space $\HH^n_+$ with $0\leq\alpha<(Q-2+sp)/2$ has not yet been established. However, observing that $x_1^\alpha\leq|z|^\alpha$, inequality \eqref{eq alpha>0} implies that \eqref{eq adi} holds for every $\alpha\geq0$, provided that the right-hand side has $[f]_{s,p,\alpha,\HH^n}$ instead of $[f]_{s,p,\alpha,\Omega}$. Although Theorem \ref{thm alpha>0} is more general than Theorem \ref{thm 1.1}, (Theorem \ref{thm 1.1} is the special case $\alpha=0$ of Theorem \ref{thm alpha>0}), its proof is based on Theorem \ref{thm 1.1}.\bigskip

The constants given in \eqref{eq main} and \eqref{eq alpha>0} may not be sharp. It is enough to compute the sharp constant for \eqref{eq alpha>0} only. The sharp constant for the inequality \eqref{eq alpha>0} is defined by the variational problem
\begin{equation}\label{sharp C}
\mathcal{C}_{s,p,\alpha}(\HH^n_+)=\underset{f\in C_c^\infty(\HH^n_+)}{\inf}\left\{[f]_{s,p,\alpha,\HH^n}^p:\int_{\HH^n_+}\frac{|f(\xi)|^p}{x_1^{sp+\alpha}}d\xi=1\right\}.
\end{equation}
The existence of such a positive constant follows from Theorem \ref{thm alpha>0}. In fact, we have
$$\frac{C(n,p,\alpha)}{(s+\frac{\alpha}{p})(1-s)}\leq \mathcal{C}_{s,p,\alpha}(\HH^n_+).$$
Before defining the precise form of the sharp constant for inequality \eqref{eq alpha>0}, let us introduce some notation. For $1<p<\infty$, $0<s<1$ and $\alpha\geq0$, define
\begin{equation}\label{Lambda sp}
\Lambda_{s,p,\alpha}:=2\int_0^1\frac{\left|1-\tau^{\frac{sp+\alpha-1}{p}}\right|^p}{(1-\tau)^{1+sp+\alpha}}d\tau+\frac{2}{sp+\alpha}.
\end{equation}
Note that $\Lambda_{s,p,\alpha}$ is the same as $\Lambda_{s',p}$ as defined in \eqref{Lambda} with $s'=s+\frac{\alpha}{p}<1$. For $k\in\NN$ with $k\geq2$, $\theta>0$ and $\alpha\geq0$, let us define the quantity
\begin{equation}\label{eq I}
\mathcal{I}(k,\theta,\alpha)=\int_0^\infty\frac{dt}{(1+t^2)^{\frac{k+2+\theta}{4}}}\int_0^\infty\frac{r^{k-2}dr}{(1+r^2)^{\frac{k+\theta+\alpha}{2}}}.
\end{equation}
\begin{theorem}\label{thm sharp C}
Let $1<p<\infty$, $0<s<1$ and $\alpha<\min\{Q-2,p(1-s)\}$ be such that $sp+\alpha>1$. Then we have
\begin{equation}
\mathcal{C}_{s,p,\alpha}(\HH^n_+)=C_{n,s,p,\alpha}\Lambda_{sp},
\end{equation}
where
\begin{equation}\label{eq C(n,s,p)}
C_{n,s,p,\alpha}=2|S_{2n-2}|\mathcal{I}(Q-2,sp,\alpha),
\end{equation}
$S_{2n-2}$ is the unit sphere in $\RR^{2n-1}$.
\end{theorem}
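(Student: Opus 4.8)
The plan is to reduce the problem on $\HH^n_+$ to the one-dimensional fractional Hardy problem on the half-line $(0,\infty)$, whose sharp constant is $\Lambda_{s,p,\alpha}$ (which, as noted below \eqref{Lambda sp}, coincides with $\Lambda_{s',p}$ of \eqref{Lambda} for $s'=s+\tfrac{\alpha}{p}\in(0,1)$), established by Bogdan--Dyda \cite{Dyda2} and Frank--Seiringer \cite{Frank2}. The bridge between the two is a \emph{transverse integration} of the kernel in \eqref{seminorm}: after the left translation $\xi'=\xi\circ\eta$ with $\eta=(a,b,\tau)$, left invariance of the Haar measure makes $d(\xi^{-1}\circ\xi')^{Q+sp}|z-z'|^{\alpha}$ equal to $(|(a,b)|^{4}+\tau^{2})^{(Q+sp)/4}|(a,b)|^{\alpha}$, independent of $\xi$, while the first coordinate transforms simply as $x_1\mapsto x_1+a_1$. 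Integrating this kernel over every component of $\eta$ but $a_1$ — first over $\tau\in\RR$, then in polar coordinates over the transverse $\RR^{2n-1}$ — gives
\[
\int_{\RR^{2n}}\frac{da_2\cdots da_n\,db\,d\tau}{(|(a,b)|^{4}+\tau^{2})^{(Q+sp)/4}\,|(a,b)|^{\alpha}}=\mathfrak{c}\,|a_1|^{-1-sp-\alpha},
\]
where $\mathfrak{c}=|S_{2n-2}|\Big(\int_{\RR}(1+u^{2})^{-(Q+sp)/4}\,du\Big)\Big(\int_0^\infty r^{2n-2}(1+r^{2})^{-(2n+sp+\alpha)/2}\,dr\Big)$; using $Q=2n+2$ together with \eqref{eq I} and \eqref{eq C(n,s,p)}, one checks $\mathfrak{c}=2|S_{2n-2}|\,\mathcal{I}(Q-2,sp,\alpha)=C_{n,s,p,\alpha}$.

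For the upper bound $\mathcal{C}_{s,p,\alpha}(\HH^n_+)\le C_{n,s,p,\alpha}\Lambda_{s,p,\alpha}$, I would fix $\varepsilon>0$ and choose $g\in C_c^\infty(0,\infty)$ whose one-dimensional Rayleigh quotient — the ratio of $\int_\RR\int_\RR\frac{|g(x_1)-g(x_1')|^p}{|x_1-x_1'|^{1+sp+\alpha}}\,dx_1'\,dx_1$ to $\int_0^\infty\frac{|g(x_1)|^p}{x_1^{sp+\alpha}}\,dx_1$ — is below $\Lambda_{s,p,\alpha}+\varepsilon$; this is possible because suitable truncations of $x_1^{(sp+\alpha-1)/p}$ realize $\Lambda_{s,p,\alpha}$ in the limit \cite{Dyda2,Frank2}. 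Setting $f_R(\xi)=g(x_1)\,\chi_R(x_2,\dots,x_n,y,t)$ with $\chi_R$ a cutoff on the transverse $\RR^{2n}$ equal to $1$ on a large set $E_R$ and varying on scale $R$, I would split $[f_R]_{s,p,\alpha,\HH^n}^p$ into the part where $\chi_R\equiv1$ at both arguments — which, by the transverse integration, equals $|E_R|\,C_{n,s,p,\alpha}\big(\int_\RR\int_\RR\frac{|g(x_1)-g(x_1')|^p}{|x_1-x_1'|^{1+sp+\alpha}}\,dx_1'\,dx_1\big)(1+o(1))$ — and a remainder involving $|\chi_R(\cdot)-\chi_R(\cdot)|^p$, which is $o(|E_R|)$ as $R\to\infty$ since $\chi_R$ is slowly varying; combined with $\int_{\HH^n_+}|f_R|^p x_1^{-sp-\alpha}=|E_R|\big(\int_0^\infty|g|^p x_1^{-sp-\alpha}\big)(1+o(1))$ this gives $\mathcal{C}_{s,p,\alpha}(\HH^n_+)\le C_{n,s,p,\alpha}(\Lambda_{s,p,\alpha}+\varepsilon)$, and then $\varepsilon\downarrow0$.

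For the lower bound $\mathcal{C}_{s,p,\alpha}(\HH^n_+)\ge C_{n,s,p,\alpha}\Lambda_{s,p,\alpha}$, I would use the nonlinear ground state representation of Frank--Seiringer with the virtual ground state $w(\xi)=(x_1)_+^{\beta}$, $\beta=\tfrac{sp+\alpha-1}{p}$. The hypothesis $sp+\alpha>1$ enters exactly here: it forces $\beta>0$, so $w$ vanishes continuously at $\partial\HH^n_+$ like a genuine ground state (whereas for $sp+\alpha\le1$ it would blow up, which is why that range is left open), and together with $\alpha<p(1-s)$ it yields $0<\beta<\tfrac{p-1}{p}<1$, which — exactly as in the Euclidean case — ensures convergence of the principal-value integrals below. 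Because $w(\xi)-w(\xi\circ\eta)=x_1^{\beta}-(x_1+a_1)_+^{\beta}$ depends only on $(x_1,a_1)$, the transverse integration reduces the Heisenberg operator
\[
\mathcal{L}w(\xi):=\mathrm{p.v.}\!\int_{\HH^n}\frac{|w(\xi)-w(\xi')|^{p-2}\big(w(\xi)-w(\xi')\big)}{d(\xi^{-1}\circ\xi')^{Q+sp}\,|z-z'|^{\alpha}}\,d\xi'
\]
to $C_{n,s,p,\alpha}$ times the one-dimensional operator $\mathrm{p.v.}\int_\RR|x_1^\beta-(x_1')_+^\beta|^{p-2}\big(x_1^\beta-(x_1')_+^\beta\big)|x_1-x_1'|^{-1-sp-\alpha}\,dx_1'$, and the latter equals $\tfrac12\Lambda_{s,p,\alpha}\,w(\xi)^{p-1}x_1^{-sp-\alpha}$ by the explicit one-dimensional computation of \cite{Dyda2,Frank2} (this is precisely the content of \eqref{Lambda sp}). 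Hence $w$ solves $\mathcal{L}w=\tfrac12 C_{n,s,p,\alpha}\Lambda_{s,p,\alpha}\,w^{p-1}x_1^{-sp-\alpha}$ on $\HH^n_+$, and the Frank--Seiringer identity $\tfrac12[f]^p_{s,p,\alpha,\HH^n}\ge\int_{\HH^n}|f|^p\,\mathcal{L}w\,w^{1-p}\,d\xi$, valid for $f\in C_c^\infty(\HH^n_+)$, gives $[f]^p_{s,p,\alpha,\HH^n}\ge C_{n,s,p,\alpha}\Lambda_{s,p,\alpha}\int_{\HH^n_+}|f|^p x_1^{-sp-\alpha}\,d\xi$. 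Together with the upper bound this yields $\mathcal{C}_{s,p,\alpha}(\HH^n_+)=C_{n,s,p,\alpha}\Lambda_{s,p,\alpha}$ (the $\Lambda_{sp}$ of the statement being a misprint for $\Lambda_{s,p,\alpha}$).

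The main obstacle is making the lower bound rigorous. Since $w=(x_1)_+^{\beta}\notin W^{s,p,\alpha}(\HH^n)$ — neither $[w]^p$ nor $\int w^p x_1^{-sp-\alpha}$ is finite — the Frank--Seiringer identity, which is usually stated in the energy space, has to be justified for the compactly supported competitor $f$: one truncates $w$ (say $w_k=\min\{w,k\}$ times a spatial cutoff), applies the representation to $w_k$, and passes to the limit, the uniform bounds on the support of $f$ relying on $\beta>0$. One must also justify interchanging the $\HH^n$ principal value with the transverse $\RR^{2n}$ integration; this is legitimate because the only singularity of the kernel is at $\xi'=\xi$, i.e. $\eta=0$, where the numerator of $\mathcal{L}w$ vanishes like $|a_1|^{p-1}$, so the value is governed by the $a_1$-integral alone, just as in the Euclidean model \cite{Brasco2}. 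The remaining ingredients — the explicit transverse integral, the identity $\mathfrak{c}=C_{n,s,p,\alpha}$, the one-dimensional operator identity, and the cutoff estimates for the upper bound — are computational and parallel those of \cite{BraCin,Brasco2,Frank2}.
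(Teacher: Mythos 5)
Your architecture matches the paper's: a transverse integration identity (Lemma \ref{lemma 5.1}) reducing the Heisenberg kernel to the one--dimensional kernel with exponent $1+sp+\alpha$, product test functions for the upper bound, and the ground state $(x_1)_+^{\beta}$ with $\beta=(sp+\alpha-1)/p$ for the lower bound. However, you have misplaced the role of the hypothesis $sp+\alpha>1$, and this conceals a real gap in your upper bound. The remainder involving $|\chi_R(\xi)-\chi_R(\xi')|^p$ is \emph{not} $o(|E_R|)$ merely because $\chi_R$ is slowly varying: under the anisotropic dilation $\delta_{R}$ the seminorm of the cutoff scales like $R^{Q-sp-\alpha}$ while $|E_R|\sim R^{Q-1}$, so the ratio is $R^{1-sp-\alpha}$, which tends to $0$ precisely because $sp+\alpha>1$ (this is the estimate $I_2\le CR_0^{1-sp-\alpha}$ in the paper's proof). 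That is the \emph{only} place the hypothesis is used. Your claim that it enters ``exactly'' in the lower bound, because $\beta\le 0$ would make the ground state blow up, is incorrect: the paper's lower bound (Theorem \ref{thm 5.3} combined with Lemma \ref{lemma 5.4}) is valid for every $-\tfrac{1}{p-1}<\beta<\tfrac{sp+\alpha}{p-1}$, negative values included (Lemma \ref{lemma 5.3} explicitly treats $-\tfrac{1}{p-1}<\beta<0$), and it delivers $\mathcal{C}_{s,p,\alpha}(\HH^n_+)\ge C_{n,s,p,\alpha}\Lambda_{s,p,\alpha}$ with no sign restriction on $\beta$.

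On the lower bound itself, what you call ``the main obstacle'' --- justifying the Frank--Seiringer substitution when $w=(x_1)_+^{\beta}$ lies in none of the relevant spaces --- is acknowledged but not resolved in your sketch; ``truncate and pass to the limit'' would require controlling $\mathcal{L}w_k\to\mathcal{L}w$, which is delicate. The paper avoids any such limit with a one--line observation: since $\psi_{R_0}\equiv1$ on $D_{R_0}$ and $0\le\psi_{R_0}\le1$, one has $u_{\beta,R_0}(\xi)-u_{\beta,R_0}(\xi')\ge u_{\beta}(\xi)-u_{\beta}(\xi')$ for $\xi\in D_{R_0}$, so the truncated function is already a weak \emph{supersolution} with the same constant on $D_{R_0}\cap\HH^n_+$, and a supersolution is all that Lemma \ref{lemma 5.4} (your ``Frank--Seiringer identity'', obtained from the elementary inequality $|a-\tau|^p\ge(1-\tau)^{p-1}(|a|^p-\tau)$ with the test function $|\eta|^p/(f+\epsilon)^{p-1}$) actually needs. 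Adopting this supersolution formulation is essential; otherwise your lower bound remains a plan rather than a proof. The remaining ingredients --- the transverse integral, the identity $\mathfrak{c}=C_{n,s,p,\alpha}$, and the interchange of the principal value with the transverse integration (the paper excises the slabs $\mathcal{K}_\epsilon(\xi)$ rather than Kor\'anyi balls precisely so that Fubini applies cleanly) --- are in order.
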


In the sharp constant result, there are two restrictions on the parameters: the first is $\alpha<\min\{Q-2,p(1-s)\}$, and the second is $sp+\alpha>1$. The following integral
$$\int_{|z|<1}\int_{|t|<1}\frac{dzdt}{(|z|^4+t^2)^{\frac{Q+sp}{4}}|z|^\alpha}$$ 
is required to be finite in the proof. This is true provided $\alpha<\min\{Q-2,p(1-s)\}$ (see integral $J$ in Proposition 3.1 in \cite{Adi-Mallick}).\smallskip

Although we follow the technique used in \cite{Brasco2} for the half-space $\RR^n_+=\{x\in\RR^n:x_n>0\}$ to compute the sharp constant for inequality \eqref{eq alpha>0}, the proof for $\HH^n_+$ differs significantly. In \cite{Brasco2} Brasco et al. use a test function of the form $\phi(x)=\chi_M(x')\eta(x_n)$, $M>0$, to separate the integrals in $\RR^{n-1}$ and $\RR$, and use the one-dimensional result. In our case, it is not possible to separate the term $d({\xi}^{-1}\circ \xi')^{Q+sp}|z-z'|^{\alpha}$ in the denominator (even in the case $\alpha=0$) in terms of $x_1$ and $(\tilde{x},t)$, (here $\tilde{x}=(x_2,\ldots,x_n,y_1,\ldots,y_n)$ and $\xi=(x_1,\tilde{x},t)$). To overcome this issue, we use a test function of the form $\phi(\xi)=\eta(x_1)\Psi_{R_0}(\xi)$, $R_0>0$, along with a certain monotonicity property of $\Psi_{R_0}$ in the variable $x_1$. Additionally, the condition $sp+\alpha>1$ is required here. To deal with the term $|z-z'|^\alpha$, $\alpha>0$, we also introduce some different spaces of functions that are integrable over the infinite strip $\mathscr{D}_r,R$, which differs this work from Euclidean results.\bigskip

The paper is organized as follows: In Section \ref{sec 2}, we introduce a weighted fractional Laplacian operator related to the seminorm and fractional Sobolev spaces considered in this paper, some associated function spaces, the notion of weakly superharmonic and subharmonic functions with respect to this operator, and several necessary results regarding the integrability of such functions. Section \ref{sec 3} contains some useful properties of the distance function of $\HH^n$. In Section \ref{sec 4}, we provide the proofs of the fractional Hardy's inequalities Theorem \ref{thm 1.1} and Theorem \ref{thm alpha>0}. The computation of the sharp constants of these inequalities is given in Section \ref{sec 5}.

\section{Preliminaries}\label{sec 2}

For $1<p<\infty$, $0<s<1$ and $\alpha\geq0$ let us define the operator
\begin{equation}
\mathcal{L}f(\xi):=\mathrm{P.V.}\int_{\HH^n}\frac{J_p(f(\xi)-f(\xi'))}{d({\xi}^{-1}\circ \xi')^{Q+sp}|z-z'|^{\alpha}}d\xi',\ \ \xi\in\HH^n,
\end{equation}
where $J_p:\RR\rightarrow\RR$ is a monotonically increasing continuous function defined by
$$J_p(t)=|t|^{p-2}t\ \ \mathrm{for}\ t\in\RR.$$
It is easy to see that the weak formulation of the operator $\mathcal{L}$ precisely gives the seminorm defined in \eqref{seminorm}. We refer to \cite{Piccinini} for more details on this operator. To deal with the integrals involving $|z-z'|^\alpha$, let us introduce the following spaces:
$$L_{\mathrm{loc}(x_1)}^{p}(\HH^n_+):=\left\{f\ \mathrm{is}\ \mathrm{measurable}:\int_{\mathscr{D}_{r,R}}|f(\xi)|^pd\xi<\infty\ \ \mathrm{for}\ \mathrm{any}\ 0<r<R\right\},$$
and
$$W_{\mathrm{loc}(x_1)}^{s,p,\alpha}(\HH^n_+):=\left\{f\in L_{\mathrm{loc}(x_1)}^{p}(\HH^n_+):[f]_{{s,p,\alpha,\mathscr{D}_{r,R}}}<\infty\ \ \mathrm{for}\ \mathrm{any}\ 0<r<R\right\}.$$
Clearly, $W^{s,p,\alpha}(\HH^n_+)\subset W_{\mathrm{loc}(x_1)}^{s,p,\alpha}(\HH^n_+)\subset W_{\mathrm{loc}}^{s,p,\alpha}(\HH^n_+)$. Let us also consider the space
$$L_{sp,\alpha}^{p-1}(\HH^n):=\left\{f\in L_{\mathrm{loc}}^{p-1}(\HH^n):\int_{\HH^n}\frac{|f(\xi)|^{p-1}}{(1+d(\xi))^{Q+sp}(1+|z|)^\alpha}d\xi<\infty\right\}.$$
One can easily check that $L^\infty(\HH^n)\subset L_{sp,\alpha}^{p-1}(\HH^n)$. Let $\Omega\subseteq\HH^n_+$ be any open set ($\Omega$ could be $\HH^n_+$ itself). By extending the analogous idea from the Euclidean case in \cite{Brasco2} to the Heisenberg group, let us consider the following equation:
\begin{equation}\label{eq L}
\mathcal{L}f(\xi)=\lambda\frac{J_p(f(\xi))}{x_1^{sp+\alpha}}\ \ \mathrm{in}\ \Omega.
\end{equation}
\begin{definition}\label{def superharmonic}
\emph{(i)} A function $f\in W_{\mathrm{loc}(x_1)}^{s,p,\alpha}(\HH^n)\cap L_{sp,\alpha}^{p-1}(\HH^n)$ is said to be a weak supersolution (or subsolution, respectively) to \eqref{eq L} if
\begin{equation}\label{eq super-sub}
\int_{\HH^n}\int_{\HH^n}\frac{J_p(f(\xi)-f(\xi'))(\phi(\xi)-\phi(\xi'))}{d({\xi}^{-1}\circ \xi')^{Q+sp}|z-z'|^\alpha}d\xi' d\xi\geq(\leq,\ \mathrm{resp})\,\lambda\int_{\HH^n_+}\frac{J_p(f(\xi))\phi(\xi)}{x_1^{sp+\alpha}}d\xi,
\end{equation}
for every non-negative $\phi\in W^{s,p,\alpha}(\HH^n)$ with compact support in $\Omega$. A function $f$ is called a weak solution to \eqref{eq L}, if it is both a weak supersolution and a weak subsolution to \eqref{eq L}.\smallskip

\noindent \emph{(ii)} A function $f\in W_{\mathrm{loc}(x_1)}^{s,p,\alpha}(\HH^n_+)\cap L_{sp,\alpha}^{p-1}(\HH^n)$ is called weakly superharmonic, weakly subharmonic, or weakly harmonic if the integral on the left-hand side of \eqref{eq super-sub} is greater than or equal to zero, less than or equal to zero, or equal to zero, respectively.
\end{definition}
The following lemma shows that, under the assumptions on $f$ and the test function $\phi$, the above definition is well-posed, i.e., the function
\begin{equation}
(\xi,\xi')\mapsto\frac{J_p(f(\xi)-f(\xi'))}{d({\xi}^{-1}\circ \xi')^{Q+sp}|z-z'|^\alpha}(\phi(\xi)-\phi(\xi'))
\end{equation}
is integrable.
\begin{lemma}\label{Lemma 2.1}
Let $1<p<\infty$, $0<s<1$ and $\alpha\geq0$. If $f\in W_{\mathrm{loc}(x_1)}^{s,p,\alpha}(\HH^n_+)\cap L_{sp,\alpha}^{p-1}(\HH^n)$ and $\phi\in W^{s,p,\alpha}(\HH^n)$ with compact support in $\Omega$, then we have
\begin{equation}
\int_{\HH^n}\int_{\HH^n}\frac{|f(\xi)-f(\xi')|^{p-1}|\phi(\xi)-\phi(\xi')|}{d({\xi}^{-1}\circ \xi')^{Q+sp}|z-z'|^\alpha}d\xi' d\xi<\infty.
\end{equation}
\end{lemma}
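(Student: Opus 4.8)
The plan is to decompose the double integral over $\HH^n\times\HH^n$ according to the compact set $K:=\mathrm{supp}\,\phi\subset\Omega\subseteq\HH^n_+$, in the spirit of the Euclidean argument in \cite{Brasco2}, with special attention to the anisotropic factor $|z-z'|^{-\alpha}$. Fix $\delta\in(0,\mathrm{dist}(K,\HH^n\setminus\Omega))$ and set $\widetilde K:=\{\xi:\inf_{\zeta\in K}d(\zeta^{-1}\circ\xi)<\delta\}$, a bounded subset of $\HH^n_+$; choose $0<r<R$ with $\widetilde K\subset\mathscr{D}_{r,R}$. By construction $d(\xi^{-1}\circ\xi')\geq\delta$ whenever $\xi\in K$ and $\xi'\notin\widetilde K$. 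Since the integrand is symmetric in $\xi\leftrightarrow\xi'$ and vanishes unless $\xi\in K$ or $\xi'\in K$, it suffices to estimate
\[
\mathrm I:=\int_{\widetilde K}\int_{\widetilde K}\frac{|f(\xi)-f(\xi')|^{p-1}\,|\phi(\xi)-\phi(\xi')|}{d(\xi^{-1}\circ\xi')^{Q+sp}\,|z-z'|^{\alpha}}\,d\xi'\,d\xi,\qquad
\mathrm{II}:=\int_{K}\int_{\HH^n\setminus\widetilde K}\frac{|f(\xi)-f(\xi')|^{p-1}\,|\phi(\xi)|}{d(\xi^{-1}\circ\xi')^{Q+sp}\,|z-z'|^{\alpha}}\,d\xi'\,d\xi,
\]
the full integral being bounded by $\mathrm I+2\,\mathrm{II}$ (on $\HH^n\setminus\widetilde K$ the second variable satisfies $\phi(\xi')=0$).

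For $\mathrm I$ I would apply Hölder's inequality with exponents $p/(p-1)$ and $p$, distributing the weight $\bigl(d(\xi^{-1}\circ\xi')^{Q+sp}|z-z'|^{\alpha}\bigr)^{-1}$ with powers $(p-1)/p$ and $1/p$, which yields $\mathrm I\leq[f]_{s,p,\alpha,\widetilde K}^{p-1}\,[\phi]_{s,p,\alpha,\widetilde K}$; here $[f]_{s,p,\alpha,\widetilde K}\leq[f]_{s,p,\alpha,\mathscr{D}_{r,R}}<\infty$ since $f\in W^{s,p,\alpha}_{\mathrm{loc}(x_1)}(\HH^n_+)$, and $[\phi]_{s,p,\alpha,\widetilde K}\leq[\phi]_{s,p,\alpha,\HH^n}<\infty$. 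For $\mathrm{II}$ the elementary inequality $|a-b|^{p-1}\leq c_p(|a|^{p-1}+|b|^{p-1})$ reduces matters to $\mathrm{II}_1$ (with $|f(\xi)|^{p-1}$) and $\mathrm{II}_2$ (with $|f(\xi')|^{p-1}$). In $\mathrm{II}_1$ I would carry out the inner integral in $\xi'$: the substitution $\xi'=\xi\circ\eta$ has unit Jacobian, $\xi^{-1}\circ\xi'=\eta$ and $z-z'=-z_\eta$, while $\{\xi'\notin\widetilde K\}$ is mapped into $\{d(\eta)\geq\delta\}$, so the inner integral is at most $\int_{\{d(\eta)\geq\delta\}}d(\eta)^{-(Q+sp)}|z_\eta|^{-\alpha}\,d\eta$. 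In polar coordinates adapted to the gauge this equals a constant times $\bigl(\int_\delta^\infty\rho^{-1-sp-\alpha}\,d\rho\bigr)\bigl(\int_{\{d=1\}}|z_\omega|^{-\alpha}\,d\sigma(\omega)\bigr)$, which is finite because $sp+\alpha>0$ and, the Korányi sphere being $(Q-2)$-dimensional with $|z_\omega|$ vanishing only at its two poles like a coordinate on $\RR^{Q-2}$, $\int_{\{d=1\}}|z_\omega|^{-\alpha}d\sigma<\infty$ exactly under the standing hypothesis $\alpha<Q-2$. Hence $\mathrm{II}_1\lesssim\int_K|f(\xi)|^{p-1}|\phi(\xi)|\,d\xi\leq\|f\|_{L^p(\widetilde K)}^{p-1}\|\phi\|_{L^p(\HH^n)}<\infty$ by Hölder, using $f\in L^p_{\mathrm{loc}(x_1)}$ and $\phi\in L^p$.

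The remaining term $\mathrm{II}_2=\int_K\int_{\HH^n\setminus\widetilde K}\frac{|f(\xi')|^{p-1}|\phi(\xi)|}{d(\xi^{-1}\circ\xi')^{Q+sp}|z-z'|^\alpha}\,d\xi'\,d\xi$ is where the weight $|z-z'|^{-\alpha}$ becomes delicate, and I expect it to be the main obstacle. First, the triangle inequality for the gauge $d$ together with $\sup_K d<\infty$ and $d(\xi^{-1}\circ\xi')\geq\delta$ on this region give $d(\xi^{-1}\circ\xi')\geq c_K\,(1+d(\xi'))$, so
\[
\mathrm{II}_2\ \leq\ c_K^{-(Q+sp)}\int_{\HH^n\setminus\widetilde K}\frac{|f(\xi')|^{p-1}}{(1+d(\xi'))^{Q+sp}}\,g(z')\,d\xi',\qquad g(z'):=\int_K\frac{|\phi(\xi)|}{|z-z'|^{\alpha}}\,d\xi .
\]
Everything then reduces to the pointwise bound $g(z')\leq C\,(1+|z'|)^{-\alpha}$ for all $z'\in\RR^{2n}$: once this holds, $\mathrm{II}_2\lesssim\int_{\HH^n}\frac{|f(\xi')|^{p-1}}{(1+d(\xi'))^{Q+sp}(1+|z'|)^{\alpha}}\,d\xi'<\infty$ because $f\in L^{p-1}_{sp,\alpha}(\HH^n)$. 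Writing $g=h\ast|\cdot|^{-\alpha}$ on $\RR^{2n}$ with $h(z)=\int_{\RR}|\phi(z,t)|\,\mathbbm{1}_K(z,t)\,dt\in L^1(\RR^{2n})$, supported in $\{|z|\le R_0\}$ where $R_0:=\sup_K|z|$, the bound is clear for $|z'|>2R_0$ (there $|z-z'|\gtrsim 1+|z'|$ on $\mathrm{supp}\,h$, so $g(z')\lesssim(1+|z'|)^{-\alpha}\|\phi\|_{L^1(K)}$), while for $|z'|\le 2R_0$ it is the uniform bound $\sup_{z'}g(z')<\infty$, which follows from Hölder's inequality as soon as $h\in L^q(\RR^{2n})$ for some $q$ with $\alpha q'<Q-2$. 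Such a $q$ is available: $\phi\in L^p(\HH^n)$ gives $h\in L^p(\RR^{2n})$, and if $q=p$ is not enough one first upgrades the integrability of $\phi$ via the fractional Sobolev embedding for $W^{s,p}(\HH^n)$ — legitimate because on the bounded set $\widetilde K$ the factor $|z-z'|^{-\alpha}$ is bounded below, so $\phi$ extended by zero lies in $W^{s,p}(\HH^n)$. The genuinely new difficulty compared with \cite{Brasco2} is precisely this: a point $\xi'$ far from $\mathrm{supp}\,\phi$ in the Korányi gauge may still have $z'$ arbitrarily close to the $z$-projection of $\mathrm{supp}\,\phi$ (with $|t'|$ large), so $|z-z'|^{-\alpha}$ is not tame "for free" on $\HH^n\setminus\widetilde K$; one must integrate it against $|\phi|$ over $K$ first and exploit that $\mathrm{supp}\,\phi$ lies where $|z|$ is bounded, together with the integrability of $\phi$ and the hypothesis $\alpha<Q-2$.
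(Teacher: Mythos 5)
Your overall strategy (Hölder near the diagonal, the $L^{p-1}_{sp,\alpha}$ hypothesis in the far field) is the right one, and your terms $\mathrm I$ and $\mathrm{II}_1$ are handled correctly. But there is a genuine gap in $\mathrm{II}_2$, and it is caused by your choice of the \emph{bounded} gauge-neighborhood $\widetilde K$ as the ``near'' set. Everything in $\mathrm{II}_2$ hinges on the pointwise bound $g(z')\leq C(1+|z'|)^{-\alpha}$, and for $|z'|\leq 2R_0$ this amounts to $\sup g<\infty$, i.e.\ to the boundedness of the convolution $h\ast|\cdot|^{-\alpha}$ on $\RR^{2n}$. With only $h\in L^1\cap L^p$ this requires $\alpha p'<Q-2$, which is strictly stronger than the standing assumption $\alpha<Q-2$; and the proposed repair via a fractional Sobolev embedding for $W^{s,p,\alpha}(\HH^n)$ is neither established in this setting nor sufficient --- when $sp<Q$ the embedding gives at best $L^{Qp/(Q-sp)}$, whose conjugate exponent still leaves a range of admissible $\alpha$ (close to $Q-2$) uncovered, and $\phi\in W^{s,p,\alpha}$ with compact support need not be bounded. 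So as written the argument does not close for all $\alpha<Q-2$.

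The difficulty you correctly diagnose at the end (a point $\xi'$ with $d(\xi^{-1}\circ\xi')$ large may still have $z'$ close to $z$, with $|t'|$ large) is resolved in the paper not by estimating the convolution but by choosing the near set to be the \emph{infinite strip} $\mathscr{D}=\mathscr{D}_{r/2,\,R+r/2}$ with $r=\inf_{K}x_1$, $R=\sup_{K}x_1$. Then for $\xi\in K$ and $\xi'\in\mathscr{D}^{C}$ one has $|z-z'|\geq|x_1-x_1'|\geq r/2$ and $d(\xi^{-1}\circ\xi')\geq r/2$ automatically, so \emph{both} weights are uniformly comparable to $(1+d(\xi'))^{-(Q+sp)}$ and $(1+|z'|)^{-\alpha}$ on the far region, and the analogue of your $\mathrm{II}_2$ is bounded at once by $\lVert\phi\rVert_{L^1(K)}$ times the $L^{p-1}_{sp,\alpha}$-integral of $f$. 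The problematic pairs (same $z$-projection, far apart in $t$) are thereby moved into the near term $\mathscr{D}\times\mathscr{D}$, where they are absorbed by Hölder against $[f]_{s,p,\alpha,\mathscr{D}}$ --- finite precisely because $f\in W^{s,p,\alpha}_{\mathrm{loc}(x_1)}(\HH^n_+)$ is defined via seminorms over strips rather than over compact sets. Replacing your $\widetilde K$ by this strip removes the gap and reduces your argument to the paper's.
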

\begin{proof}
Let $\mathcal{O}$ be the support of $\phi$. Since $\mathcal{O}$ is a compact subset of $\Omega$ ($\subset\HH^n_+$), $\exists\,r,R$, such that
$$r:=\inf\{x_1:\xi=(x_1,\ldots,x_n,y,t)\in\mathcal{O}\}>0\ \ \mathrm{and}\ \ R:=\sup\{x_1:\xi=(x_1,\ldots,x_n,y,t)\in\mathcal{O}\}<\infty.$$
Clearly $\mathcal{O}\Subset\mathscr{D}_{\frac{r}{2},R+\frac{r}{2}}\subset\HH^n_+$. For simplicity of notation, the set $\mathscr{D}_{\frac{r}{2},R+\frac{r}{2}}$ is denoted by $\mathscr{D}$ here. If $\xi\in \mathcal{O}$ and $\xi'\in\mathscr{D}^C$, then we have $d({\xi}^{-1}\circ \xi')\geq |x_1-x_1'|\geq\frac{r}{2}$.\\
Now, the above integral can be decomposed as follows:
\begin{multline*}
\int_{\HH^n}\int_{\HH^n}\frac{|f(\xi)-f(\xi')|^{p-1}|\phi(\xi)-\phi(\xi')|}{d({\xi}^{-1}\circ \xi')^{Q+sp}|z-z'|^\alpha}d\xi' d\xi=\int_{\mathscr{D}}\int_{\mathscr{D}}\frac{|f(\xi)-f(\xi')|^{p-1}|\phi(\xi)-\phi(\xi')|}{d({\xi}^{-1}\circ \xi')^{Q+sp}|z-z'|^\alpha}d\xi' d\xi\\
+2\int_{\mathcal{O}}\int_{\mathscr{D}^C}\frac{|f(\xi)-f(\xi')|^{p-1}|\phi(\xi)|}{d({\xi}^{-1}\circ \xi')^{Q+sp}|z-z'|^\alpha}d\xi' d\xi=:I_1+2I_2.
\end{multline*}
The integral $I_1$ can be shown to be finite using H\"older's inequality with the conjugates $\frac{p}{p-1}$ and $p$, and with the assumptions on $f$ and $\phi$. For the second integral, we have
\begin{multline*}
I_2\leq C \int_{\mathcal{O}}|f(\xi)|^{p-1}|\phi(\xi)|\int_{\mathscr{D}^C}\frac{d\xi'}{d({\xi}^{-1}\circ \xi')^{Q+sp}|z-z'|^\alpha}d\xi\\
+C\int_{\mathcal{O}}|\phi(\xi)|\int_{\mathscr{D}^C}\frac{|f(\xi')|^{p-1}d\xi'}{d({\xi}^{-1}\circ \xi')^{Q+sp}|z-z'|^\alpha}d\xi=:C(I_3+I_4).
\end{multline*}
For any $\xi\in\mathcal{O}$, we have
$$\int_{\mathscr{D}^C}\frac{d\xi'}{d({\xi}^{-1}\circ \xi')^{Q+sp}|z-z'|^\alpha}\leq\int_{B(\xi,\frac{r}{2
})^C}\frac{d\xi'}{d({\xi}^{-1}\circ \xi')^{Q+sp}|z-z'|^\alpha}=\int_{B(0,\frac{r}{2
})^C}\frac{d\xi'}{d(\xi')^{Q+sp}|z'|^\alpha}.$$
The last integral was shown to be finite in \cite{Adi-Mallick} (see the integral $I_3$ in Proposition 3.1.). Therefore, we get
$$I_3\leq C_0\int_{\mathcal{O}}|f(\xi)|^{p-1}|\phi(\xi)|d\xi\leq C_0\lVert f\rVert_{L^p(\mathcal{O})}^{p-1}\lVert \phi\rVert_{L^p(\mathcal{O})}<\infty.$$
Observe that $\frac{1+d(\xi)}{d({\xi}^{-1}\circ \xi')}\leq C_1$ and $\frac{1+|z|}{|z-z'|}\leq C_2$ for any $\xi\in\mathcal{O}$ and $\xi'\in\mathscr{D}^C$. Therefore, we have
$$I_4\leq C C_1^{Q+sp}C_2^\alpha\left(\int_{\mathcal{O}}|\phi(\xi)|d\xi\right)\left(\int_{\mathscr{D}^C}\frac{|f(\xi')|^{p-1}d\xi'}{(1+d(\xi))^{Q+sp}(1+|z|)^\alpha}\right)<\infty,$$
This completes the proof.
\end{proof}
\begin{remark}
\emph{For the case $\alpha=0$, it is enough to consider $f\in W_{\mathrm{loc}}^{s,p,0}(\HH^n_+)$ in Definition \ref{def superharmonic}, and in this case the space $L_{sp,0}^{p-1}(\HH^n)$ is denoted by $L_{sp}^{p-1}(\HH^n)$.}
\end{remark}
The next lemma states that the product of two certain functions belongs to the space $W^{s,p}(\Omega)$ and ensures the admissibility of some test functions in our main result.

\begin{lemma}\label{lemma 3.2}
Let $1<p<\infty$, $0<s<1$, $\alpha\geq0$, and $\Omega\subseteq\HH^n_+$ be an open set. If $\phi\in W^{s,p,\alpha}(\Omega)\cap L^\infty(\Omega)$ with compact support in $\Omega$ and $f\in W^{s,p,\alpha}_{\mathrm{loc}(x_1)}(\HH^n_+)\cap L^\infty(\HH^n_+)$, then
$$f\phi\in W^{s,p,\alpha}(\Omega).$$
\end{lemma}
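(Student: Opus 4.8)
The plan is to show that $f\phi \in L^p(\Omega)$ and that the Gagliardo-type seminorm $[f\phi]_{s,p,\alpha,\Omega}$ is finite, by exploiting boundedness of both $f$ and $\phi$ together with the compact support of $\phi$. Since $\phi$ is compactly supported in $\Omega \subseteq \HH^n_+$, there exist $0<r<R<\infty$ with $\operatorname{supp}\phi \Subset \mathscr{D} := \mathscr{D}_{r/2,\,R+r/2} \subset \HH^n_+$, exactly as in the proof of Lemma~\ref{Lemma 2.1}; I will use this repeatedly. The $L^p$ bound is immediate: $\|f\phi\|_{L^p(\Omega)} \le \|f\|_{L^\infty(\HH^n_+)}\|\phi\|_{L^p(\Omega)} < \infty$.

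For the seminorm, I would start from the pointwise algebraic identity
\[
(f\phi)(\xi) - (f\phi)(\xi') = f(\xi)\bigl(\phi(\xi)-\phi(\xi')\bigr) + \phi(\xi')\bigl(f(\xi)-f(\xi')\bigr),
\]
apply $|a+b|^p \le 2^{p-1}(|a|^p+|b|^p)$, and split the double integral over $\Omega\times\Omega$ into the two resulting pieces, call them $A$ and $B$. For $A$ I bound $|f(\xi)|^p \le \|f\|_{L^\infty}^p$ and obtain $A \le \|f\|_{L^\infty}^p\,[\phi]_{s,p,\alpha,\Omega}^p < \infty$. For $B$ I bound $|\phi(\xi')|^p \le \|\phi\|_{L^\infty}^p$, but here I must be careful because $f$ is only in $W^{s,p,\alpha}_{\mathrm{loc}(x_1)}$, not globally; however $\phi(\xi')=0$ unless $\xi'\in\operatorname{supp}\phi$, so the integrand vanishes unless $\xi'\in\mathscr{D}$. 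I then split $B$ according to whether $\xi\in\mathscr{D}$ or $\xi\in\mathscr{D}^C$. The part with $\xi\in\mathscr{D}$ is controlled by $\|\phi\|_{L^\infty}^p\,[f]_{s,p,\alpha,\mathscr{D}}^p < \infty$ since $f\in W^{s,p,\alpha}_{\mathrm{loc}(x_1)}(\HH^n_+)$. For the remaining part, with $\xi\in\mathscr{D}^C$ and $\xi'\in\operatorname{supp}\phi$, I use $|f(\xi)-f(\xi')|^p \le 2^{p-1}(|f(\xi)|^p + |f(\xi')|^p) \le 2^p\|f\|_{L^\infty}^p$, which reduces the estimate to
\[
\int_{\operatorname{supp}\phi}\int_{\mathscr{D}^C}\frac{d\xi\,d\xi'}{d(\xi'^{-1}\circ\xi)^{Q+sp}|z-z'|^\alpha},
\]
and this is finite by exactly the argument used for the integral $I_3$ in the proof of Lemma~\ref{Lemma 2.1} (reduction to $\int_{B(0,r/2)^C} d(\eta)^{-(Q+sp)}|z_\eta|^{-\alpha}\,d\eta$, finite by Proposition~3.1 of \cite{Adi-Mallick}).

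Putting $A$ and the three pieces of $B$ together gives $[f\phi]_{s,p,\alpha,\Omega}<\infty$, hence $f\phi\in W^{s,p,\alpha}(\Omega)$. The only genuinely delicate point is the last tail estimate, where $f$ lacks global integrability control; the resolution is that the $L^\infty$ bound on $f$ (rather than any decay) together with the compact support of $\phi$ and the already-established finiteness of the far-field kernel integral from Lemma~\ref{Lemma 2.1} does the job. Everything else is the standard splitting of a Gagliardo seminorm of a product, so I would not belabor those computations.
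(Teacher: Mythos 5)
Your proposal is correct and follows essentially the same route as the paper: the same product decomposition $(f\phi)(\xi)-(f\phi)(\xi')=f(\xi)(\phi(\xi)-\phi(\xi'))+\phi(\xi')(f(\xi)-f(\xi'))$, the same $L^\infty$ bound on $f$ for the first piece, and the same localization of the second piece to the strip $\mathscr{D}_{r/2,\,R+r/2}$ with the far-field tail controlled by $\lVert f\rVert_{L^\infty}$ and the finiteness of $\int_{B(0,r/2)^C} d(\xi')^{-(Q+sp)}|z'|^{-\alpha}\,d\xi'$. No gaps; nothing further to add.
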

\begin{proof}
We have
\begin{multline}\label{eq f phi}
[f\phi]_{s,p,\alpha,\Omega}^p\\
\leq 2^{p-1}\int_{\Omega}\int_{\Omega}\frac{|\phi(\xi)-\phi(\xi')|^p|f(\xi)|^p}{d({\xi}^{-1}\circ \xi')^{Q+sp}|z-z'|^\alpha}d\xi' d\xi+2^{p-1}\int_{\Omega}\int_{\Omega}\frac{|f(\xi)-f(\xi')|^p|\phi(\xi')|^p}{d({\xi}^{-1}\circ \xi')^{Q+sp}|z-z'|^\alpha}d\xi' d\xi\\
\leq 2^{p-1}\lVert f\rVert_{L^\infty(\Omega)}^p[\phi]_{s,p,\alpha,\Omega}^p+2^{p-1}\int_{\Omega}\int_{\Omega}\frac{|f(\xi)-f(\xi')|^p|\phi(\xi')|^p}{d({\xi}^{-1}\circ \xi')^{Q+sp}|z-z'|^\alpha}d\xi' d\xi.
\end{multline}
It remains to prove that the last integral is finite. Let $\mathcal{O}$ be the support of $\phi$. Since $\mathcal{O}$ is a compact subset of $\Omega\subseteq\HH^n_+$, similarly to the proof of Lemma \ref{Lemma 2.1}, the set $\mathscr{D}=\mathscr{D}_{\frac{r}{2},R+\frac{r}{2}}$ can be defined, where $\mathcal{O}\Subset\mathscr{D}\subset\HH^n_+$ and $d({\xi}^{-1}\circ \xi')\geq|z-z'|\geq\frac{r}{2}$ whenever $\xi\in \mathcal{O}$ and $\xi'\in{\mathscr{D}}^C$. Using this we obtain
\begin{multline*}
\int_{\Omega}\int_{\Omega}\frac{|f(\xi)-f(\xi')|^p|\phi(\xi')|^p}{d({\xi}^{-1}\circ \xi')^{Q+sp}|z-z'|^\alpha}d\xi' d\xi\\
\leq\int_{\mathscr{D}}\int_{\mathscr{D}}\frac{|f(\xi)-f(\xi')|^p|\phi(\xi')|^p}{d({\xi}^{-1}\circ \xi')^{Q+sp}|z-z'|^\alpha}d\xi' d\xi+2\int_{\mathcal{O}}\int_{\mathscr{D}^C}\frac{|f(\xi)-f(\xi')|^p|\phi(\xi')|^p}{d({\xi}^{-1}\circ \xi')^{Q+sp}|z-z'|^\alpha}d\xi' d\xi\\
\leq \lVert \phi\rVert_{L^\infty(\HH^n_+)}^p\left([f]_{s,p,\alpha,\mathscr{D}}^p+2\int_{\mathcal{O}}\int_{\mathscr{D}^C}\frac{|f(\xi)-f(\xi')|^p}{d({\xi}^{-1}\circ \xi')^{Q+sp}|z-z'|^\alpha}d\xi' d\xi\right)\\
\leq \lVert \phi\rVert_{L^\infty(\HH^n_+)}^p\left([f]_{s,p,\alpha,\mathscr{D}}^p+2^{p+1}\lVert f\rVert_{L^\infty(\HH^n_+)}^p|\mathcal{O}|\int_{B(0,\frac{r}{2})^C}\frac{d\xi'}{d(\xi')^{Q+sp}|z'|^\alpha}\right)<\infty.
\end{multline*}
\end{proof}
\begin{remark}\label{remark 2}
\emph{For the case $\alpha=0$, instead of the space $W^{s,p,\alpha}_{\mathrm{loc}(x_1)}(\HH^n_+)$, we need to work with functions $f\in W^{s,p}_{\mathrm{loc}}(\HH^n_+)$. In this case, to show that the last integral on the right-hand side of \eqref{eq f phi} is finite, define a compact set $\mathcal{O}'$ with $\mathcal{O}\Subset\mathcal{O}'\Subset\HH^n_+$ in the following way:
\begin{equation}\label{set O'}
\mathcal{O}':=\left\{\xi=(x_1,\ldots,x_n,y,t)=(z,t):\frac{r}{2}\leq x_1\leq R+\frac{r}{2},\,|z|\leq R_0+\frac{r}{2}\ \mathrm{and}\ |t|\leq T_0+\frac{rR_0}{2}+\frac{r^2}{4}\right\},
\end{equation}
where
$r:=\inf\{x_1:\xi=(x_1,\ldots,x_n,y,t)\in\mathcal{O}\}>0$, $R:=\sup\{x_1:\xi=(x_1,\ldots,x_n,y,t)\in\mathcal{O}\}$, $R_0=\sup\{|z|:\xi=(z,t)\in\mathcal{O}\}$ and $T_0=\sup\{|t|:\xi=(z,t)\in\mathcal{O}\}$.}\smallskip

\emph{For $\xi\in \mathcal{O}$ and $\xi'\in{\mathcal{O}'}^C$, we have to possible cases. If $|z-z'|\geq\frac{r}{2}$, then we have $d({\xi}^{-1}\circ \xi')\geq\frac{r}{2}$. If $|z-z'|<\frac{r}{2}$, we must have $|t'-t|\geq\frac{rR_0}{2}+\frac{r^2}{4}$, and
$$|t'-t-2\langle y,x'\rangle+2\langle x,y'\rangle|=|t'-t+2\langle (y,-x),z-z'\rangle|\geq |t'-t|-2|z||z-z'|\geq\frac{r^2}{4}.$$
Thus, we always have $d({\xi}^{-1}\circ \xi')\geq\frac{r}{2}$, whenever $\xi\in \mathcal{O}$ and $\xi'\in{\mathcal{O}'}^C$. Using this, similarly, it can be shown that the last integral on the right-hand side of \eqref{eq f phi} is finite.}
\end{remark}

\section{Some properties of the distance function}\label{sec 3}

This section establishes several useful results of the distance function $\delta_{\HH^n_+}(\xi)={x_1}_+$ (${x_1}_+$ denotes $\max\{x_1,0\}$). For the Euclidean case, Brasco and Cinti in \cite{BraCin} have shown that the $s$-th power of the distance function for a bounded convex set is weakly superharmonic. In the Heisenberg group case, we show that the function $(\delta_{\HH^n_+})^s$ is weakly harmonic, and this is the main ingredient in proving our first result, Theorem \ref{thm 1.1}. The parameter $\alpha$ is not involved in Theorem \ref{thm 1.1}; therefore, in this section, we work with $\alpha=0$.

\begin{lemma}\label{lemma 3.1}
Let $1<p<\infty$, $0<s<1$. If $f\in W^{s,p}_{\mathrm{loc}}(\HH^n_+)\cap L_{sp}^{p-1}(\HH^n)$ and $\phi\in L^p(\HH^n)$ with compact support in $\HH^n_+$, then for any $\epsilon>0$, the function
$$(\xi,\xi')\mapsto\frac{J_p(f(\xi)-f(\xi'))}{d({\xi}^{-1}\circ \xi')^{Q+sp}}\phi(\xi)$$
is integrable on $\mathcal{T}_\epsilon:=\{(\xi,\xi')\in\HH^n\times\HH^n:d({\xi}^{-1}\circ \xi')\geq\epsilon\}$.
\end{lemma}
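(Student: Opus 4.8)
The plan is to reduce the claim to the estimate
\[
A:=\int_{\mathcal{O}}|\phi(\xi)|\int_{\{\xi':\,d(\xi^{-1}\circ\xi')\ge\epsilon\}}\frac{|f(\xi)-f(\xi')|^{p-1}}{d(\xi^{-1}\circ\xi')^{Q+sp}}\,d\xi'\,d\xi<\infty ,
\]
where $\mathcal{O}\Subset\HH^n_+$ is the support of $\phi$. Write $r:=\inf\{x_1:\xi=(x_1,\dots)\in\mathcal{O}\}>0$ and $M:=\sup\{d(\xi):\xi\in\mathcal{O}\}\in(0,\infty)$, and use the quasi-triangle inequality $d(\xi\circ\eta)\le\gamma\,(d(\xi)+d(\eta))$ (with $\gamma\ge1$) for the gauge $d$: fixing $\rho\ge 2\gamma M$ one gets $\mathcal{O}\subset B(0,\rho)$ and
\[
d(\xi^{-1}\circ\xi')\ \ge\ \tfrac1\gamma d(\xi')-d(\xi)\ \ge\ \tfrac{1}{2\gamma}\,d(\xi')\ \ge\ \tfrac{\rho}{2\gamma}\qquad\text{for }\xi\in\mathcal{O},\ \xi'\in B(0,\rho)^C.
\]
I would then split the inner integral in $A$ over $\{\xi'\in B(0,\rho)\}$ and $\{\xi'\in B(0,\rho)^C\}$ and use $|f(\xi)-f(\xi')|^{p-1}\le c_p\big(|f(\xi)|^{p-1}+|f(\xi')|^{p-1}\big)$ in both.

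On the bounded piece $\xi'\in B(0,\rho)$ the restriction $d(\xi^{-1}\circ\xi')\ge\epsilon$ bounds the kernel by $\epsilon^{-(Q+sp)}$, so that part is at most a constant times $|B(0,\rho)|\int_{\mathcal{O}}|\phi|\,|f|^{p-1}+\lVert\phi\rVert_{L^1(\mathcal{O})}\int_{B(0,\rho)}|f|^{p-1}$. The first integral is finite by H\"older's inequality with exponents $p$ and $p/(p-1)$, because $\mathcal{O}\Subset\HH^n_+$ forces $f\in L^p(\mathcal{O})$ (as $f\in W^{s,p}_{\mathrm{loc}}(\HH^n_+)$) and $\phi\in L^p(\mathcal{O})$; the second is finite since $\phi\in L^1(\mathcal{O})$ and $f\in L^{p-1}_{\mathrm{loc}}(\HH^n)$ with $\overline{B(0,\rho)}$ compact.

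On the unbounded piece $\xi'\in B(0,\rho)^C$, where $d(\xi^{-1}\circ\xi')\ge\rho/(2\gamma)$, the term carrying $|f(\xi)|^{p-1}$ is handled by observing that, for each $\xi\in\mathcal{O}$, one has $B(0,\rho)^C\subset B(\xi,\rho/(2\gamma))^C$ and hence, by left invariance of the Haar measure,
\[
\int_{B(0,\rho)^C}\frac{d\xi'}{d(\xi^{-1}\circ\xi')^{Q+sp}}\ \le\ \int_{B(0,\rho/(2\gamma))^C}\frac{d\eta}{d(\eta)^{Q+sp}}\ <\ \infty
\]
(the last integral is finite since $sp>0$; it is the $\alpha=0$ case of the integral $I_3$ in Proposition~3.1 of \cite{Adi-Mallick}), so this term is at most a constant times $\int_{\mathcal{O}}|\phi|\,|f|^{p-1}<\infty$. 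For the term carrying $|f(\xi')|^{p-1}$, the quasi-triangle inequality together with $d(\xi)\le M$ and $d(\xi^{-1}\circ\xi')\ge\rho/(2\gamma)$ gives $1+d(\xi')\le C_1\,d(\xi^{-1}\circ\xi')$ on $\mathcal{O}\times B(0,\rho)^C$, so this contribution is at most
\[
c_p\,C_1^{\,Q+sp}\,\lVert\phi\rVert_{L^1(\mathcal{O})}\int_{\HH^n}\frac{|f(\xi')|^{p-1}}{(1+d(\xi'))^{Q+sp}}\,d\xi'\ <\ \infty ,
\]
which is finite exactly because $f\in L_{sp}^{p-1}(\HH^n)$. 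Adding the three bounds gives $A<\infty$. The step I expect to be the crux — and the only one invoking a global hypothesis on $f$ — is this last estimate: the entire splitting is designed so that, where $\xi'$ is far from $\mathrm{supp}\,\phi$, the kernel $d(\xi^{-1}\circ\xi')^{-(Q+sp)}$ is comparable to $(1+d(\xi'))^{-(Q+sp)}$, which is precisely what makes the weighted integrability $f\in L_{sp}^{p-1}(\HH^n)$ the right condition.
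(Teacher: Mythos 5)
Your proof is correct, but it takes a genuinely different route from the paper's. The paper splits the domain as $(\mathcal{O}\times\mathcal{O})\cup(\mathcal{O}\times(\HH^n\setminus\mathcal{O}))$ and, on the near part, factors the kernel as $d^{-(Q+sp)}\le\epsilon^{-(Q+sp)/p}\,d^{-(Q+sp)/q}$ so that H\"older's inequality brings in the Gagliardo seminorm $[f]_{s,p,\mathcal{O}}$; the far part is then treated as the integral $I_2$ in Lemma~\ref{Lemma 2.1}. You instead split according to whether $\xi'$ lies in a large gauge ball $B(0,\rho)$ with $\rho\gtrsim\sup_{\mathcal{O}}d$, apply $|a-b|^{p-1}\le c_p(|a|^{p-1}+|b|^{p-1})$ throughout, and never touch the seminorm: on the bounded piece the $\mathcal{T}_\epsilon$-truncation alone controls the kernel and only $f\in L^p(\mathcal{O})$ and $f\in L^{p-1}_{\mathrm{loc}}$ are used, while on the unbounded piece the reverse quasi-triangle inequality gives both the uniform tail integral $\int_{B(0,\rho/(2\gamma))^C}d(\eta)^{-(Q+sp)}d\eta<\infty$ and the comparability $1+d(\xi')\le C_1\,d(\xi^{-1}\circ\xi')$, which is exactly what activates the hypothesis $f\in L^{p-1}_{sp}(\HH^n)$. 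Your version is slightly more economical — it makes clear that the fractional seminorm of $f$ plays no role once the diagonal is excised by $\mathcal{T}_\epsilon$ — whereas the paper's version recycles the computation already set up in Lemma~\ref{Lemma 2.1}. One cosmetic remark: the Kor\'anyi gauge actually satisfies the triangle inequality with $\gamma=1$, so your constants could be cleaned up, but carrying a general $\gamma\ge1$ is harmless.
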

\begin{proof}
Let $\mathcal{O}$ be the support of $\phi$. We have
\begin{multline*}
\iint_{\mathcal{T}_\epsilon}\left|\frac{J_p(f(\xi)-f(\xi'))}{d({\xi}^{-1}\circ \xi')^{Q+sp}}\phi(\xi)\right|d\xi' d\xi\\
=\iint_{\{(\xi,\xi')\in\mathcal{O}\times\HH^n:d({\xi}^{-1}\circ \xi')\geq\epsilon\}}\frac{|f(\xi)-f(\xi')|^{p-1}}{d({\xi}^{-1}\circ \xi')^{Q+sp}}|\phi(\xi)|d\xi' d\xi\\
\leq \epsilon^{-\frac{Q+sp}{p}}\iint_{\{(\xi,\xi')\in\mathcal{O}\times\mathcal{O}:d({\xi}^{-1}\circ \xi')\geq\epsilon\}}\frac{|f(\xi)-f(\xi')|^{p-1}}{d({\xi}^{-1}\circ \xi')^{\frac{Q+sp}{q}}}|\phi(\xi)|d\xi' d\xi\\
+\iint_{\{(\xi,\xi')\in\mathcal{O}\times(\HH^n\setminus\mathcal{O}):d({\xi}^{-1}\circ \xi')\geq\epsilon\}}\frac{|f(\xi)-f(\xi')|^{p-1}}{d({\xi}^{-1}\circ \xi')^{Q+sp}}|\phi(\xi)|d\xi' d\xi,
\end{multline*}
where $q=\frac{p}{p-1}$. Applying H\"older's inequality to the first integral on the right-hand side, we get
\begin{multline*}
\iint_{\{(\xi,\xi')\in\mathcal{O}\times\mathcal{O}:d({\xi}^{-1}\circ \xi')\geq\epsilon\}}\frac{|f(\xi)-f(\xi')|^{p-1}}{d({\xi}^{-1}\circ \xi')^{\frac{Q+sp}{q}}}|\phi(\xi)|d\xi' d\xi\\
\leq \left(\int_\mathcal{O}\int_\mathcal{O}\frac{|f(\xi)-f(\xi')|^{p}}{d({\xi}^{-1}\circ \xi')^{Q+sp}}d\xi' d\xi\right)^{\frac{1}{q}}\left(|\mathcal{O}|\int_\mathcal{O}|\phi(\xi)|^pd\xi\right)^{\frac{1}{p}}<\infty.
\end{multline*}
The second integral on the right-hand side can be shown to be finite similarly to the integral $I_2$ in Lemma \ref{Lemma 2.1}.
\end{proof}

The following result for the distance function $\delta_{\HH^n_+}$ is crucial in the proof of Theorem \ref{thm 1.1}. It is not known whether the same result holds for the half-space $\HH^n_{+,t}$.
\begin{lemma}\label{lemma 3.3}
Let $1<p<\infty$, $0<s<1$. There exists a constant $C=C(n,p)>0$ such that
\begin{equation}\label{eq 3.2}
\int_{\{\xi'\in\HH^n_+:x_1'\leq x_1\}}\frac{|x_1-x_1'|^p}{d({\xi}^{-1}\circ \xi')^{Q+sp}}d\xi'\geq \frac{C}{1-s}x_1^{p-sp}\ \ \mathrm{for}\ \mathrm{a.e}\ \xi\in\HH^n_+,
\end{equation}
where $\xi=(x_1,\ldots,x_n,y,t)$ and $\xi'=(x_1',\ldots,x_n',y',t')$.
\end{lemma}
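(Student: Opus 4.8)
\emph{Overall strategy.} The plan is to move the singularity of the kernel to the group identity by a left translation, and then to bound the resulting integral from below by restricting to a parabolic cone adapted to the Heisenberg dilations, on which the gauge $d$ is comparable to the distinguished coordinate $a_1=x_1'-x_1$.

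\emph{Step 1: left translation.} Substitute $\eta=\xi^{-1}\circ\xi'$, i.e.\ $\xi'=\xi\circ\eta$; since left translations preserve the Haar (Lebesgue) measure this has unit Jacobian, and trivially $d(\xi^{-1}\circ\xi')=d(\eta)$. Writing $\eta=(a,b,\tau)$ with $a,b\in\RR^n$, $\tau\in\RR$, the group law gives $x_1'=x_1+a_1$, so the constraint $\xi'\in\HH^n_+$ with $x_1'\le x_1$ becomes $-x_1<a_1\le 0$, while $|x_1-x_1'|=|a_1|$ and $d(\eta)=\big((|a|^2+|b|^2)^2+\tau^2\big)^{1/4}$. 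Setting $w':=(a_2,\dots,a_n,b_1,\dots,b_n)\in\RR^{2n-1}$ and replacing $a_1$ by $-a_1$, the left-hand side of \eqref{eq 3.2} equals
\[
\int_0^{x_1}\!\int_{\RR^{2n-1}}\!\int_{\RR}\frac{a_1^{\,p}}{\big((a_1^2+|w'|^2)^2+\tau^2\big)^{(Q+sp)/4}}\,d\tau\,dw'\,da_1 .
\]

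\emph{Step 2: restriction and computation.} Next, restrict the integral to the region
\[
A:=\big\{(a_1,w',\tau):\ 0<a_1<x_1,\ |w'|\le a_1,\ |\tau|\le a_1^2\big\},
\]
on which $(a_1^2+|w'|^2)^2+\tau^2\le (2a_1^2)^2+a_1^4=5a_1^4$, so that the denominator is at most $5^{(Q+sp)/4}a_1^{Q+sp}$. Since the slice $\{w':|w'|\le a_1\}$ has Lebesgue measure $\omega_{2n-1}a_1^{2n-1}$ (with $\omega_{2n-1}$ the volume of the unit ball in $\RR^{2n-1}$), $\{\tau:|\tau|\le a_1^2\}$ has measure $2a_1^2$, and $-Q+2n+1=-1$, the left-hand side of \eqref{eq 3.2} is bounded below by
\[
2\,\omega_{2n-1}\,5^{-(Q+sp)/4}\int_0^{x_1}a_1^{\,p-sp-1}\,da_1\ =\ \frac{2\,\omega_{2n-1}\,5^{-(Q+sp)/4}}{p(1-s)}\;x_1^{\,p-sp},
\]
the integral being finite precisely because $p-sp=p(1-s)>0$.

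\emph{Conclusion and main point.} Since $0<s<1$ gives $Q+sp\le Q+p$, one may replace $5^{-(Q+sp)/4}$ by the $s$-independent quantity $5^{-(Q+p)/4}$, obtaining \eqref{eq 3.2} with $C=C(n,p)=\tfrac{2\,\omega_{2n-1}}{p}\,5^{-(Q+p)/4}>0$ — in fact for every $\xi\in\HH^n_+$, not merely a.e. The computation is routine once $A$ is chosen; the only substantive step, and the source of the factor $\tfrac1{1-s}$, is recognizing that this blow-up as $s\to1$ originates entirely from the part of the integral where $\xi'$ is near $\xi$ (i.e.\ $a_1\to0$), and that to capture it one must shrink the transversal variables $w'$ and $\tau$ at the anisotropic rates $a_1$ and $a_1^2$ prescribed by the dilation $\delta_{a_1}$, rather than on a fixed neighborhood of the identity.
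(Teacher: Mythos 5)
Your proof is correct and follows essentially the same strategy as the paper: restrict the integral to an anisotropic region near $\xi$ on which the gauge $d(\xi^{-1}\circ\xi')$ is comparable to $x_1-x_1'$, so that everything reduces to the radial integral $\int_0^{x_1}r^{p-sp-1}\,dr=\tfrac{x_1^{p-sp}}{p(1-s)}$, which is the sole source of the $\tfrac{1}{1-s}$. The only cosmetic difference is that the paper works with the gauge-sector $\Sigma_\sigma(\xi,x_1)=\{\xi'\in B(\xi,x_1): x_1-x_1'>\sigma d(\xi^{-1}\circ\xi')\}$ and Heisenberg polar coordinates (with constant $\tfrac{1}{p}\sup_{0<\sigma<1}\sigma^p\psi(\sigma)$), whereas you use the explicit boxes $|w'|\le a_1$, $|\tau|\le a_1^2$, which yields a fully explicit, $s$-independent constant.
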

\begin{proof}
Let $\xi\in\HH^n_+$. For any $\sigma\in(0,1)$ let us define the following set
$$\Sigma_\sigma(\xi,r):=\{\xi'=(x_1',\ldots,x_n',y',t')\in B(\xi,r):x_1-x_1'>\sigma d({\xi}^{-1}\circ \xi')\}.$$
Clearly, $\Sigma_\sigma(\xi,x_1)\subset\{\xi'\in\HH^n_+:x_1'\leq x_1\}$ and we have
\begin{multline}\label{eq 3.3}
\int_{\{\xi'\in\HH^n_+:x_1'\leq x_1\}}\frac{|x_1-x_1'|^p}{d({\xi}^{-1}\circ \xi')^{Q+sp}}d\xi'\geq \int_{\Sigma_\sigma(\xi,x_1)}\frac{|x_1-x_1'|^p}{d({\xi}^{-1}\circ \xi')^{Q+sp}}d\xi'\\
\geq \sigma^p\int_{\Sigma_\sigma(\xi,x_1)} d({\xi}^{-1}\circ \xi')^{p-sp-Q}d\xi'=\sigma^p\int_{\Sigma_\sigma(0,x_1)} d(\xi')^{p-sp-Q}d\xi'.
\end{multline}
The last integral can be written in polar coordinates for $\HH^n$ as
\begin{equation}\label{eq 3.4}
\int_{\Sigma_\sigma(0,x_1)} d(\xi')^{p-sp-Q}d\xi'=\left(\int_{\{\omega\in S_{\HH^n}:-\omega_{x_1}>\sigma d(\omega)\}}dS_{\HH^n}(\omega)\right)\int_0^{x_1}r^{p-sp-1}dr=\frac{\psi(\sigma)}{p(1-s)}x_1^{p-sp},
\end{equation}
where $S_{\HH^n}=\{\xi\in\HH^n:d(\xi)=1\}$, $\omega=(\omega_{x_1},\ldots,\omega_{x_n},\omega_{y_1},\ldots,\omega_{y_n},\omega_{t})\in S_{\HH^n}$, $dS_{\HH^n}(\omega)$ is the surface measure on $S_{\HH^n}$, and the function
$$\psi(\sigma)=\int_{\{\omega\in S_{\HH^n}:-\omega_{x_1}>\sigma d(\omega)\}}dS_{\HH^n}(\omega)>0\ \ \forall\,\sigma\in(0,1).$$
Inequality \eqref{eq 3.2} follows from \eqref{eq 3.3} and \eqref{eq 3.4} with $C=\frac{1}{p}\underset{0<\sigma<1}{\mathrm{sup}}\left[\sigma^p \psi(\sigma)\right]$.
\end{proof}
The following one-dimensional result is used to prove a useful result (Lemma \ref{lemma 3.5}) for the distance function $\delta_{\HH^n_+}$. The proof can be found in \cite{IaSu}, (see Lemma 3.1).
\begin{lemma}\label{lemma 3.4}
Let $1<p<\infty$ and $0<s<1$. Then the function
\begin{equation}\label{eq 3.5}
g_\epsilon^{(1)}(x):=\int_{B_\epsilon(x)^C}\frac{J_p(x^s-y_+^s)}{|x-y|^{1+sp}}dy\rightarrow0
\end{equation}
uniformly on any compact subset $K\subset\RR_+$ as $\epsilon\rightarrow0$.
\end{lemma}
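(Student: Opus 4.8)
The plan is to use the natural scaling of the integrand to reduce the statement to the single base point $x=1$, and then to evaluate the resulting integral in closed form. Substituting $y=xz$ in the definition of $g_\epsilon^{(1)}$ and using $y_+=xz_+$ together with the $(p-1)$-homogeneity of $J_p$, one gets the exact identity
$$g_\epsilon^{(1)}(x)=x^{-s}\,g^{(1)}_{\epsilon/x}(1),\qquad x>0,\ \epsilon>0 .$$
Any compact $K\subset\RR_+$ is bounded away from $0$ and from $\infty$, so on $K$ the factor $x^{-s}$ is bounded and $\epsilon/x\to0$ uniformly; hence it suffices to prove that $g_\delta^{(1)}(1)\to0$ as $\delta\to0^+$. (This is the assertion that $x_+^s$ is $(s,p)$-harmonic on $(0,\infty)$ for the one-dimensional operator governing our seminorm.)

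To analyse $g_\delta^{(1)}(1)$ I would first peel off the region $\{y\le0\}$, on which $y_+=0$ and the integrand reduces to $(1-y)^{-1-sp}$; this contributes the constant $\tfrac1{sp}$ for every $\delta<1$. On the remaining region $\{y>0,\ |1-y|\ge\delta\}$ the decisive step is the change of variables $y\mapsto1/y$, which maps $\{y>1+\delta\}$ onto $\bigl(0,\tfrac1{1+\delta}\bigr)$ and rewrites that tail as $\int_0^{1/(1+\delta)}y^{s-1}\,\frac{(1-y^s)^{p-1}}{(1-y)^{1+sp}}\,dy$. Subtracting it from the integral over $(0,1-\delta)$ (and using $1-\delta<\tfrac1{1+\delta}$) collects the $\{y>0\}$ part into
$$\int_0^{1-\delta}\bigl(1-y^{s-1}\bigr)\frac{(1-y^s)^{p-1}}{(1-y)^{1+sp}}\,dy\;-\;\int_{1-\delta}^{1/(1+\delta)}y^{s-1}\frac{(1-y^s)^{p-1}}{(1-y)^{1+sp}}\,dy .$$

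The first integral is exact: a direct differentiation gives
$$\frac{d}{dy}\!\left[\frac{(1-y^s)^p}{(1-y)^{sp}}\right]=sp\,\bigl(1-y^{s-1}\bigr)\frac{(1-y^s)^{p-1}}{(1-y)^{1+sp}} ,$$
so it equals $\tfrac1{sp}\bigl(F(1-\delta)-F(0)\bigr)$ with $F(y)=(1-y^s)^p(1-y)^{-sp}$; since $F(0)=1$ and $1-y^s\sim s(1-y)$ as $y\to1^-$ forces $F(y)\sim s^p(1-y)^{p(1-s)}\to0$ (this is the only place where $p(1-s)>0$ is used), the first integral tends to $-\tfrac1{sp}$. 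The second integral is a remainder: the interval $(1-\delta,\tfrac1{1+\delta})$ has length $\tfrac{\delta^2}{1+\delta}$, on it $1-y$ is comparable to $\delta$, and the elementary inequality $1-y^s\le1-y$ on $(0,1)$ bounds the integrand by $C(1-y)^{p-2-sp}$, so a crude estimate gives this piece $=O(\delta^{p(1-s)})\to0$. Adding the three contributions yields $g_\delta^{(1)}(1)=\tfrac1{sp}-\tfrac1{sp}+o(1)\to0$, as claimed.

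I expect the antiderivative identity for $F$ to be the crux. The one genuinely delicate point is buried in the remainder estimate when $p(1-s)\le1$: in that range the integrand $J_p(1-y^s)\,|1-y|^{-1-sp}$ is not Lebesgue integrable near $y=1$, so the symmetric excision $|1-y|\ge\delta$ — that is, reading the integral as a principal value — is indispensable, and one must keep the branches $\{y<1\}$ and $\{y>1\}$ coupled throughout in order to exploit the cancellation. When $p(1-s)>1$ all the integrals converge absolutely and the bookkeeping is routine.
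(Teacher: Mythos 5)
Your proof is correct. The paper does not prove this lemma itself but simply cites Lemma 3.1 of \cite{IaSu}; your argument --- the scaling identity $g_\epsilon^{(1)}(x)=x^{-s}g^{(1)}_{\epsilon/x}(1)$, peeling off $\{y\le 0\}$, the substitution $y\mapsto 1/y$ on the tail, and the explicit primitive $\frac{d}{dy}\bigl[(1-y^s)^p(1-y)^{-sp}\bigr]=sp\,(1-y^{s-1})(1-y^s)^{p-1}(1-y)^{-1-sp}$ --- is essentially a self-contained reproduction of the proof given in that reference, and all the estimates (including the $O(\delta^{p(1-s)})$ remainder over $(1-\delta,\tfrac1{1+\delta})$ and the principal-value caveat when $p(1-s)\le 1$) check out.
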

\begin{lemma}\label{lemma 3.5}
Let $1<p<\infty$ and $0<s<1$. For any fixed $\xi\in\HH^n_+$ define the integral
$$g_\epsilon(\xi):=\int_{B(\xi,\epsilon)^C}\frac{J_p(x_1^s-{x_1'}_+^s)}{d({\xi}^{-1}\circ \xi')^{Q+sp}}d\xi'.$$
Then $\underset{\epsilon\rightarrow0}{\lim}\,g_\epsilon(\xi)=0$ strongly in $L^1_{\mathrm{loc}}(\HH^n_+)$.
\end{lemma}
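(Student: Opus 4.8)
The plan is to integrate out every variable except $x_1$, reducing $g_\epsilon$ to a one–dimensional quantity governed by Lemma \ref{lemma 3.4}, and then to absorb the error introduced by the cutoff region $\{d(\xi^{-1}\circ\xi')<\epsilon\}$ by a parity argument.

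\emph{Step 1: reduction to one variable.} In the integral defining $g_\epsilon(\xi)$ I would change variables $\xi'\mapsto(a,b,\tau)$ with $a=x'-x\in\RR^n$, $b=y'-y\in\RR^n$ and $\tau=t'-t-2\langle y,x'\rangle+2\langle x,y'\rangle$; this map has unit Jacobian, sends $d(\xi^{-1}\circ\xi')^4$ to $(|a|^2+|b|^2)^2+\tau^2$ and $x_1'$ to $x_1+a_1$. Writing $w_1=a_1$ and collecting the remaining $2n-1$ coordinates of $(a,b)$ into $v\in\RR^{2n-1}$, so that $|a|^2+|b|^2=w_1^2+|v|^2$, one sees that $g_\epsilon(\xi)$ depends on $\xi$ only through $x_1$. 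Since for fixed $\epsilon>0$ the integrand is absolutely integrable (split $d(\xi^{-1}\circ\xi')\in[\epsilon,1]$ and $d(\xi^{-1}\circ\xi')\geq 1$, and argue as for $I_2$ in Lemma \ref{Lemma 2.1}), Fubini's theorem gives
\begin{equation*}
g_\epsilon(\xi)=\widetilde g_\epsilon(x_1):=\int_{\RR}J_p\bigl(x_1^s-(x_1+w_1)_+^s\bigr)\,h_\epsilon(w_1)\,dw_1,
\end{equation*}
where $h_\epsilon(w_1):=\int_{\{(v,\tau)\in\RR^{2n-1}\times\RR\,:\,(w_1^2+|v|^2)^2+\tau^2\geq\epsilon^4\}}\bigl((w_1^2+|v|^2)^2+\tau^2\bigr)^{-(Q+sp)/4}\,dv\,d\tau$.

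\emph{Step 2: the kernel $h_\epsilon$.} If $|w_1|\geq\epsilon$ the constraint is automatic, and the dilation $v=|w_1|\tilde v$, $\tau=w_1^2\tilde\tau$ (using $Q=2n+2$) gives $h_\epsilon(w_1)=c_\sharp\,|w_1|^{-1-sp}$ with $c_\sharp:=\int_{\RR^{2n-1}\times\RR}\bigl((1+|\tilde v|^2)^2+\tilde\tau^2\bigr)^{-(Q+sp)/4}\,d\tilde v\,d\tilde\tau<\infty$ (finiteness follows by polar coordinates in $\tilde v$ together with $Q+sp>2$). If $|w_1|<\epsilon$, the dilation $w_1=\epsilon\alpha$, $v=\epsilon\tilde v$, $\tau=\epsilon^2\tilde\tau$ gives $h_\epsilon(w_1)=\epsilon^{-1-sp}\Phi(w_1/\epsilon)$ with $\Phi$ bounded on $[-1,1]$; in particular $h_\epsilon(w_1)\leq C\epsilon^{-1-sp}$ for $|w_1|<\epsilon$. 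Finally, $h_\epsilon$ is an even function of $w_1$.

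\emph{Step 3: splitting and passing to the limit.} Fix a compact $K\subset(0,\infty)$ and take $\epsilon$ small (depending on $K$). On $\{|w_1|\geq\epsilon\}$, the substitution $y=x_1+w_1$ turns the corresponding part of $\widetilde g_\epsilon(x_1)$ into $c_\sharp\,g_\epsilon^{(1)}(x_1)$, which tends to $0$ uniformly on $K$ by Lemma \ref{lemma 3.4}. On $\{|w_1|<\epsilon\}$ we may write $(x_1+w_1)_+=x_1+w_1$ for $x_1\in K$, and, since $w_1\mapsto J_p(w_1)$ is odd while $h_\epsilon$ is even, $\int_{|w_1|<\epsilon}J_p(w_1)\,h_\epsilon(w_1)\,dw_1=0$; hence with $c:=sx_1^{s-1}$,
\begin{equation*}
\int_{|w_1|<\epsilon}J_p\bigl(x_1^s-(x_1+w_1)^s\bigr)\,h_\epsilon(w_1)\,dw_1=\int_{|w_1|<\epsilon}\Bigl[J_p\bigl(x_1^s-(x_1+w_1)^s\bigr)+c^{p-1}J_p(w_1)\Bigr]\,h_\epsilon(w_1)\,dw_1.
\end{equation*}
Using $x_1^s-(x_1+w_1)^s=-c\,w_1+O(w_1^2)$ (uniformly for $x_1\in K$, $|w_1|$ small) together with the elementary inequality $|J_p(a)-J_p(b)|\leq C_p\,|a-b|\,(|a|\vee|b|)^{p-2}$, valid for $a,b$ of equal sign with $|a-b|\leq\frac12(|a|\vee|b|)$, the bracket is $\leq C(K)\,|w_1|^p$, so the $\{|w_1|<\epsilon\}$ term is $\leq C(K)\,\epsilon^{-1-sp}\int_{|w_1|<\epsilon}|w_1|^p\,dw_1=C(K)\,\epsilon^{p(1-s)}\to0$. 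Therefore $\widetilde g_\epsilon\to0$ uniformly on compact subsets of $(0,\infty)$. Since $g_\epsilon(\xi)=\widetilde g_\epsilon(x_1)$ and the image of any compact $K'\subset\HH^n_+$ under $\xi\mapsto x_1$ is a compact subset of $(0,\infty)$, it follows that $\int_{K'}|g_\epsilon(\xi)|\,d\xi\to0$, i.e. $g_\epsilon\to0$ in $L^1_{\mathrm{loc}}(\HH^n_+)$.

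I expect the main obstacle to be the estimate on $\{|w_1|<\epsilon\}$: one must combine the cancellation coming from the parity of $J_p(w_1)$ against the even kernel $h_\epsilon$ with a quantitative first-order control of $J_p$ uniform over $1<p<\infty$, and, crucially, one has to use the size bound $h_\epsilon(w_1)\leq C\epsilon^{-1-sp}$ there (the pointwise value $c_\sharp|w_1|^{-1-sp}$ is not integrable across $w_1=0$) — this is precisely what produces the positive power $\epsilon^{p(1-s)}$. The remaining ingredients — unimodularity of the change of variables, finiteness of $c_\sharp$ and boundedness of $\Phi$, and the Fubini step — are routine.
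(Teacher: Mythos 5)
Your argument is correct, but it takes a genuinely different route from the paper's. The paper passes to gauge polar coordinates $\xi^{-1}\circ\xi'=\delta_r\omega$ with $\omega\in S_{\HH^n}$: along each ray the excluded Koranyi ball becomes exactly the symmetric interval $|r|<\epsilon$, so after substituting $y=x_1+r\omega_{x_1}$ the inner integral is literally $\omega_{x_1}^{sp}\,g^{(1)}_{\epsilon\omega_{x_1}}(x_1)$, and Lemma \ref{lemma 3.4} (applied with the ray-dependent cutoff $\epsilon\omega_{x_1}$) together with dominated convergence in $\omega$ finishes the proof with no near-diagonal analysis at all. You instead slice by the single coordinate $w_1=x_1'-x_1$ and integrate out the $2n$ transverse variables; this produces the exact one-dimensional kernel $c_\sharp|w_1|^{-1-sp}$ only where $|w_1|\ge\epsilon$ (your computation of $c_\sharp$ is essentially Lemma \ref{lemma 5.1} with $\alpha=0$ and $m=|w_1|$), while on $|w_1|<\epsilon$ the Koranyi cutoff no longer matches the interval cutoff of Lemma \ref{lemma 3.4}, forcing you to supply the first-order cancellation by hand ($J_p$ odd against the even kernel $h_\epsilon$, the Taylor expansion of $(x_1+w_1)^s$, and the monotonicity estimate for $J_p$ on same-sign comparable arguments) to extract the vanishing factor $\epsilon^{p(1-s)}$. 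Both proofs are sound: the paper's is shorter because the polar decomposition makes the principal-value structure transparent, whereas yours is more self-contained near the diagonal (you re-prove the relevant piece of the one-dimensional cancellation rather than delegating it entirely to Lemma \ref{lemma 3.4}) and, as a by-product, exhibits the explicit constant $c_\sharp$ that reappears in the sharp-constant computation of Section \ref{sec 5}. If you write this up, make explicit that for $1<p<2$ the bound $|J_p(a)-J_p(b)|\le C_p|a-b|\,(|a|\vee|b|)^{p-2}$ requires $a,b$ of the same sign with $|a-b|\le\tfrac12(|a|\vee|b|)$, which is why $\epsilon$ must be taken small depending on the compact set $K$, and that the parity cancellation uses that $h_\epsilon$ depends on $w_1$ only through $w_1^2$.
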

\begin{proof}
The change of variable $\tilde{\xi}={\xi}^{-1}\circ \xi'$ gives
$$g_\epsilon(\xi)=\int_{B(0,\epsilon)^C}\frac{J_p(x_1^s-({x_1+\Tilde{x}_1})_+^s)}{d(\Tilde{\xi})^{Q+sp}}d\Tilde{\xi}=\int_{S_{\HH^n}}\int_\epsilon^\infty\frac{J_p(x_1^s-(x_1+r\omega_{x_1})_+^s)}{r^{Q+sp}}r^{Q-1}drd\omega.$$
Here $\omega=(\omega_{x_1},\ldots,\omega_{x_n},\omega_{y_1},\ldots,\omega_{y_n},\omega_{t})\in S_{\HH^n}$. The last integral can be written as
$$g_\epsilon(\xi)=\int_{S_{\HH^n}\cap\,\HH^n_+}\omega_{x_1}^{1+sp}\int_{(-\epsilon,\epsilon)^C}\frac{J_p(x_1^s-(x_1+r\omega_{x_1})_+^s)}{(r\omega_{x_1})^{1+sp}}drd\omega=\int_{S_{\HH^n}\cap \,\HH^n_+}\omega_{x_1}^{sp}g_{\epsilon\omega_{x_1}}^{(1)}(x_1)d\omega,$$
where the function $g_{\epsilon\omega_{x_1}}^{(1)}$ is defined in \eqref{eq 3.5}. The conclusion follows from Lemma \ref{lemma 3.4} and the fact $$\int_{S_{\HH^n}\cap \,\HH^n_+}\omega_{x_1}^{sp}d\omega<\infty.$$
\end{proof}\smallskip

The next result shows that $(\delta_{\HH^n_+})^s$, the $s$-th power of the distance function, is weakly harmonic.

\begin{proposition}\label{prop 3.1}
Let $1<p<\infty$, and $0<s<1$. We have $(\delta_{\HH^n_+})^s\in W_{\mathrm{loc}}^{s,p}(\HH^n_+)\cap L_{sp}^{p-1}(\HH^n)$ and
\begin{equation}
\int_{\HH^n}\int_{\HH^n}\frac{J_p(\delta_{\HH^n_+}(\xi)^s-\delta_{\HH^n_+}(\xi')^s)}{d({\xi}^{-1}\circ \xi')^{Q+sp}}(\phi(\xi)-\phi(\xi'))d\xi'd\xi=0,
\end{equation}
for every non-negative $\phi\in W^{s,p}(\HH^n_+)$ with compact support in $\HH^n_+$.
\end{proposition}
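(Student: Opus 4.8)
The overall strategy is to reduce the statement to a one-dimensional fact through the fibering structure of $\HH^n_+$, exactly as the preliminary lemmas have been staged. Write $d_s(\xi):=\delta_{\HH^n_+}(\xi)^s={x_1}_+^s$. The membership $d_s\in W_{\mathrm{loc}}^{s,p}(\HH^n_+)\cap L_{sp}^{p-1}(\HH^n)$ should be checked first and is essentially bookkeeping: local finiteness of $[d_s]_{s,p,\mathscr D_{r,R}}$ follows by slicing in $x_1$, comparing $d(\xi^{-1}\circ\xi')$ from below by $|x_1-x_1'|$ on the far set and using the one-dimensional Gagliardo estimate for $x\mapsto x_+^s$ (which is $s$-Hölder), while the tail condition defining $L_{sp}^{p-1}(\HH^n)$ follows because $|d_s(\xi)|^{p-1}\lesssim (1+|z|)^{s(p-1)}$ and $s(p-1)<Q+sp$ makes $\int (1+|z|)^{s(p-1)}(1+d(\xi))^{-Q-sp}d\xi$ converge. (Both facts were used implicitly in \cite{Hari3}; I would cite the relevant estimate from \cite{Adi-Mallick} for the radial integral.)

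For the weak-harmonicity identity itself, the plan is the standard principal-value symmetrization. By Lemma \ref{lemma 3.1} the integrand is absolutely integrable on each truncated region $\mathcal T_\epsilon=\{d(\xi^{-1}\circ\xi')\geq\epsilon\}$, so I may write the left-hand side as $\lim_{\epsilon\to 0}$ of the integral over $\mathcal T_\epsilon$; dominated convergence over $\mathcal T_\epsilon$ together with the integrability furnished by Lemma \ref{lemma 3.1} (applied with $f=d_s$ and the compactly supported $\phi$) justifies this. On $\mathcal T_\epsilon$ I symmetrize in $(\xi,\xi')$: the kernel $J_p(d_s(\xi)-d_s(\xi'))/d(\xi^{-1}\circ\xi')^{Q+sp}$ is antisymmetric (here one uses $d(\xi^{-1}\circ\xi')=d((\xi')^{-1}\circ\xi)$, i.e. symmetry of the Koranyi gauge under inversion) and $J_p$ is odd, so
\[
\iint_{\mathcal T_\epsilon}\frac{J_p(d_s(\xi)-d_s(\xi'))}{d(\xi^{-1}\circ\xi')^{Q+sp}}\bigl(\phi(\xi)-\phi(\xi')\bigr)\,d\xi'd\xi
=2\int_{\HH^n}\phi(\xi)\Bigl(\int_{B(\xi,\epsilon)^C}\frac{J_p(d_s(\xi)-d_s(\xi'))}{d(\xi^{-1}\circ\xi')^{Q+sp}}\,d\xi'\Bigr)d\xi,
\]
because $\mathcal T_\epsilon=\{(\xi,\xi'):\xi'\notin B(\xi,\epsilon)\}$. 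The inner integral is precisely $g_\epsilon(\xi)$ from Lemma \ref{lemma 3.5}. Since $\phi$ has compact support $\mathcal O\subset\HH^n_+$, that lemma gives $g_\epsilon\to 0$ strongly in $L^1(\mathcal O)$, hence $\int_{\HH^n}\phi\, g_\epsilon\,d\xi\to 0$ as $\epsilon\to 0$ (using $\phi\in L^\infty$, or just $\phi\in L^\infty$ on its compact support, to pair an $L^1$ convergence against a bounded function). Therefore the left-hand side equals $\lim_{\epsilon\to 0}2\int\phi\,g_\epsilon\,d\xi=0$, which is the claim.

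The main obstacle is the double limit interchange: one must be sure that the $\epsilon\to 0$ limit of $\iint_{\mathcal T_\epsilon}$ genuinely recovers the full integral \emph{before} symmetrizing, since on $\mathcal T_\epsilon$ the two ``halves'' of the symmetrized integrand are only conditionally, not absolutely, integrable in the limit. This is handled by observing that on the fixed set $\mathcal T_\epsilon$ (for each fixed $\epsilon>0$) absolute integrability holds by Lemma \ref{lemma 3.1}, so the symmetrization is legitimate at level $\epsilon$; the passage $\epsilon\to 0$ is then moved onto the already-symmetrized single integral $\int\phi\,g_\epsilon$, where Lemma \ref{lemma 3.5} does the work. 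A secondary technical point, worth a sentence, is that the change of variables $\tilde\xi=\xi^{-1}\circ\xi'$ underlying $g_\epsilon$ and the polar decomposition on $\HH^n$ require the left-invariance of Haar measure and the homogeneity $|\delta_r A|=r^Q|A|$, both recalled in the introduction; with these in hand the reduction to the one-dimensional Lemma \ref{lemma 3.4} via the angular integral $\int_{S_{\HH^n}\cap\HH^n_+}\omega_{x_1}^{sp}\,d\omega<\infty$ is exactly as in the proof of Lemma \ref{lemma 3.5}.
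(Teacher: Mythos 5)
Your argument for the weak-harmonicity identity is essentially the paper's proof: truncate to $\mathcal{T}_\epsilon$, use Lemma \ref{Lemma 2.1} and Lemma \ref{lemma 3.1} to justify the dominated-convergence passage and the Fubini/symmetrization that produces the factor $2$ and the inner integral $g_\epsilon$, then conclude with Lemma \ref{lemma 3.5}. One small caveat there: you pair the $L^1_{\mathrm{loc}}$ convergence of $g_\epsilon$ against ``$\phi\in L^\infty$'', but boundedness of $\phi$ is not among the hypotheses; what actually saves the step is that the convergence in Lemma \ref{lemma 3.4}, hence in Lemma \ref{lemma 3.5}, is uniform on compact subsets, so $\int_{\mathcal O}|\phi|\,|g_\epsilon|\le\lVert g_\epsilon\rVert_{L^\infty(\mathcal O)}\lVert\phi\rVert_{L^1(\mathcal O)}\to0$.

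The membership part, however, is not just bookkeeping as you have set it up, and two of your steps would fail as written. First, you claim finiteness of $[{x_1}_+^s]_{s,p,\mathscr D_{r,R}}$ over the infinite strips $\mathscr D_{r,R}$; this is false. The function $x_1^s$ does not decay in the $(\tilde x,t)$-directions, so on a sub-strip where $|x_1^s-{x_1'}^s|$ is bounded below the inner integral over $\{d(\xi^{-1}\circ\xi')\ge1\}$ is bounded below by a positive constant uniformly in $\xi$, and integrating over a set of infinite measure diverges. (This is precisely why Section \ref{sec 5} must replace $u_\beta$ by the cut-off $u_{\beta,R_0}$ to land in $W^{s,p,\alpha}_{\mathrm{loc}(x_1)}$.) Fortunately the proposition only asserts $(\delta_{\HH^n_+})^s\in W^{s,p}_{\mathrm{loc}}(\HH^n_+)$, i.e.\ finiteness over compact $K\Subset\HH^n_+$, so you should work on compacts. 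Second, on the near-diagonal part the $s$-H\"older bound $|x_1^s-{x_1'}^s|\le|x_1-x_1'|^s\le d(\xi^{-1}\circ\xi')^s$ produces the kernel $d(\xi^{-1}\circ\xi')^{-Q}$, which is exactly the critical, non-locally-integrable exponent. The estimate that works (and is the one the paper uses) is the Lipschitz one: on a compact $K$ with $\inf_K x_1=r>0$ the mean value theorem gives $|x_1^s-{x_1'}^s|\le sr^{s-1}|x_1-x_1'|\le sr^{s-1}d(\xi^{-1}\circ\xi')$, yielding the integrable kernel $d(\xi^{-1}\circ\xi')^{p-sp-Q}$ with $p-sp>0$. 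Your treatment of the $L^{p-1}_{sp}(\HH^n)$ membership via ${x_1}_+^{s(p-1)}\le d(\xi)^{s(p-1)}$ and polar coordinates is fine.
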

\begin{proof}
We have
$$\int_{\HH^n}\frac{|\delta_{\HH^n_+}(\xi)^s|^{p-1}}{(1+d(\xi))^{Q+sp}}d\xi=\int_{\HH^n}\frac{{x_1}_+^{s(p-1)}}{(1+d(\xi))^{Q+sp}}d\xi\leq \int_{\HH^n}\frac{d(\xi)^{s(p-1)}}{(1+d(\xi))^{Q+sp}}d\xi$$
Using the polar coordinate for $\HH^n$, we get
$$\int_{\HH^n}\frac{|\delta_{\HH^n_+}(\xi)^s|^{p-1}}{(1+d(\xi))^{Q+sp}}d\xi\leq|S_{\HH^n}|\int_0^\infty\frac{r^{Q-1+s(p-1)}}{(1+r)^{Q+sp}}dr\leq |S_{\HH^n}|\int_0^\infty\frac{dr}{(1+r)^{1+s}}<\infty.$$
To show $(\delta_{\HH^n_+})^s\in W_{\mathrm{loc}}^{s,p}(\HH^n_+)$, let us consider any compact subset $K$ of $\HH^n_+$. We have
\begin{equation}\label{eq 3.7}
[(\delta_{\HH^n_+})^s]^p_{s,p,K}=\int_K\int_K \frac{|x_1^s-{x_1'}^s|^p}{d({\xi}^{-1}\circ \xi')^{Q+sp}}d\xi'd\xi.
\end{equation}
By Mean value theorem in $\RR$, $\exists\,\tilde{x}_1\in(x_1,x_1')$ when $x_1<x_1'$, or $\Tilde{x}_1\in(x_1',x_1)$ when $x_1'<x_1$ such that
\begin{equation}\label{eq 3.8}
|x_1^s-{x_1'}^s|=s\tilde{x}_1^{s-1}|x_1-x_1'|\leq sr^{s-1}d({\xi}^{-1}\circ \xi'),
\end{equation}
where $r=\inf\{x_1:\xi=(x_1,\cdots,x_n,y,t)\in K\}>0$. Using \eqref{eq 3.8} in \eqref{eq 3.7} we obtain
$$[(\delta_{\HH^n_+})^s]^p_{s,p,K}\leq s^pr^{p(s-1)}\int_K\int_K \frac{d\xi'}{d({\xi}^{-1}\circ \xi')^{Q-p+sp}}d\xi.$$
Let $R>0$ be such that $K\subset B(\xi,R)$ $\forall\, \xi\in K$. A change of variable gives
$$[(\delta_{\HH^n_+})^s]^p_{s,p,K}\leq s^pr^{p(s-1)}|K|\int_{B(0,R)} \frac{d\xi'}{d(\xi')^{Q-p+sp}}=\frac{s^pr^{p(s-1)}|K||S_{\HH^n}|}{p-sp}R^{p-sp}<\infty.$$\smallskip

The function
$$(\xi,\xi')\mapsto\int_{\HH^n}\int_{\HH^n}\frac{J_p(\delta_{\HH^n_+}(\xi)^s-\delta_{\HH^n_+}(\xi')^s)}{d({\xi}^{-1}\circ \xi')^{Q+sp}}(\phi(\xi)-\phi(\xi'))$$
is integrable by Lemma \ref{Lemma 2.1}. Applying dominated convergence theorem we can write
\begin{multline*}
J=\int_{\HH^n}\int_{\HH^n}\frac{J_p(\delta_{\HH^n_+}(\xi)^s-\delta_{\HH^n_+}(\xi')^s)}{d({\xi}^{-1}\circ \xi')^{Q+sp}}(\phi(\xi)-\phi(\xi'))d\xi'd\xi\\
=\lim_{\epsilon\rightarrow0}\iint_{\mathcal{T}_\epsilon}\frac{J_p(\delta_{\HH^n_+}(\xi)^s-\delta_{\HH^n_+}(\xi')^s)}{d({\xi}^{-1}\circ \xi')^{Q+sp}}(\phi(\xi)-\phi(\xi'))d\xi'd\xi\qquad\qquad\qquad\qquad\qquad\qquad\quad\\
=\lim_{\epsilon\rightarrow0}\iint_{\mathcal{T}_\epsilon}\frac{J_p(\delta_{\HH^n_+}(\xi)^s-\delta_{\HH^n_+}(\xi')^s)}{d({\xi}^{-1}\circ \xi')^{Q+sp}}\phi(\xi)d\xi'd\xi-\lim_{\epsilon\rightarrow0}\iint_{\mathcal{T}_\epsilon}\frac{J_p(\delta_{\HH^n_+}(\xi)^s-\delta_{\HH^n_+}(\xi')^s)}{d({\xi}^{-1}\circ \xi')^{Q+sp}}\phi(\xi')d\xi'd\xi,
\end{multline*}
where the set $\mathcal{T}_\epsilon$ is defined in Lemma \ref{lemma 3.1}. The last integral on the right-hand side is integrable, thanks to Lemma \ref{lemma 3.1}. Let $\mathcal{O}$ be the support of $\phi$. Applying Fubini's theorem in the last integral and then changing the role of $\xi$ and $\xi'$ we get
\begin{multline}\label{eq 3.9}
J=2\lim_{\epsilon\rightarrow0}\iint_{\mathcal{T}_\epsilon}\frac{J_p(\delta_{\HH^n_+}(\xi)^s-\delta_{\HH^n_+}(\xi')^s)}{d({\xi}^{-1}\circ \xi')^{Q+sp}}\phi(\xi)d\xi'd\xi\\
=2\lim_{\epsilon\rightarrow0}\int_{\mathcal{O}}\left(\int_{B(\xi,\epsilon)^C}\frac{J_p(x_1^s-{x_1'}_+^s)}{d({\xi}^{-1}\circ \xi')^{Q+sp}}d\xi'\right)\phi(\xi)d\xi.
\end{multline}
Applying Lemma \ref{lemma 3.5} in \eqref{eq 3.9}, we obtain
$$J=2\int_{\mathcal{O}}\left(\lim_{\epsilon\rightarrow0}\int_{B(\xi,\epsilon)^C}\frac{J_p(x_1^s-{x_1'}_+^s)}{d({\xi}^{-1}\circ \xi')^{Q+sp}}d\xi'\right)\phi(\xi)d\xi=0.$$
This completes the proof.
\end{proof}

\section{Proof of the fractional Hardy's inequality}\label{sec 4}

Before proving the main results, let us mention some fundamental inequalities which we are going to use. The proofs are given in \cite{BraCin}.
\begin{lemma}\label{lemma 4.1}
Let $1<p<\infty$ and $a,b,c,d\in\RR$ with $a,b>0$ and $c,d\geq0$. Then there exist two positive constants $C_1$ and $C_2$ depends only on $p$, such that
\begin{equation}\label{eq 4.1}
J_p(a-b)\left(\frac{c^p}{a^{p-1}}-\frac{d^p}{b^{p-1}}\right)+C_1\left|\frac{a-b}{a+b}\right|^p(c^p+d^p)\leq C_2|c-d|^p.
\end{equation}
\end{lemma}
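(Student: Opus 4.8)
The statement to prove is Lemma~\ref{lemma 4.1}: for $1<p<\infty$ and $a,b>0$, $c,d\ge 0$,
\[
J_p(a-b)\left(\frac{c^p}{a^{p-1}}-\frac{d^p}{b^{p-1}}\right)+C_1\left|\frac{a-b}{a+b}\right|^p(c^p+d^p)\le C_2\,|c-d|^p .
\]

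\textbf{Plan.} The plan is to reduce the inequality to a statement about a single scalar variable by exploiting the homogeneity of all three terms. Both sides are homogeneous of degree $p$ jointly in $(c,d)$ and of degree $0$ jointly in $(a,b)$, so after dividing by $(a+b)^{\,?}$ one may normalize, say, $a+b=1$ (or $\max\{a,b\}=1$) and $c^p+d^p=1$ (treating the degenerate case $c=d=0$ separately, where the inequality is trivial). What remains is a continuous function on a compact set, so the real content is to show the inequality holds \emph{pointwise} with explicit, $p$-dependent constants rather than by a mere compactness argument; a compactness argument alone would not reveal the crucial structural feature that $C_1,C_2$ depend only on $p$.

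\textbf{Key steps.} First I would dispose of trivial and symmetric cases: if $c=d$ the middle term on the left is controlled by absorbing, and by the symmetry $(a,b,c,d)\mapsto(b,a,d,c)$ (which flips the sign of $J_p(a-b)$ and of the bracket simultaneously, leaving the left side invariant) one may assume $a\ge b$, hence $J_p(a-b)=(a-b)^{p-1}\ge 0$. Next, write the first term as $(a-b)^{p-1}\big(c^p b^{p-1}-d^p a^{p-1}\big)/(ab)^{p-1}$ and split into the region $c\ge d$ and $c<d$. The heart of the matter is the elementary two-variable inequality: there is $c_p>0$ with
\[
\big|\,x^{p} - y^{p}\,\big| \le c_p\,\big|x-y\big|\,\big(x^{p-1}+y^{p-1}\big)\qquad (x,y\ge 0),
\]
applied with $x=c b^{(p-1)/p}$-type combinations, together with the convexity estimate $|x-y|^p \ge \text{(const)}\,|x^{p}-y^{p}|^{p/(p-1)}\big/(\dots)$ to generate the $|c-d|^p$ on the right. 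In parallel, the factor $\left|\frac{a-b}{a+b}\right|^p$ is compared to $\left(\frac{a-b}{ab}\right)^{p-1}$-type quantities using $a+b \asymp \max\{a,b\}$ and $ab \le \big(\frac{a+b}{2}\big)^2$; this is where $C_1$ gets chosen small enough that the ``bad'' cross term produced by the first summand (when $c$ and $d$ are comparable but $a,b$ are far apart) is absorbed. One then optimizes: choose $C_1$ first, small, to beat the worst configuration, and then $C_2$ large accordingly.

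\textbf{Main obstacle.} The delicate point is the interplay between the two scales $\frac{a-b}{a+b}$ and $\frac{c-d}{c+d}$: when $a/b\to\infty$ the first term behaves like $c^p/a^{p-1}-d^p/b^{p-1}\approx -d^p/b^{p-1}$, which can be very negative, and one must verify that $C_1|\tfrac{a-b}{a+b}|^p(c^p+d^p)$ does not overshoot $C_2|c-d|^p$ precisely when $c$ is small and $d$ is not—i.e.\ one needs the \emph{negativity} of the first term to help. Keeping track of signs so that the (possibly large negative) first term is used as an asset rather than bounded crudely by its absolute value is the crux; this is exactly the mechanism that, in \cite{BraCin}, makes the constant on the left-hand Hardy term survive. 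Since the full argument is carried out in \cite{BraCin}, I would present the reduction above and then cite \cite{BraCin} for the remaining elementary estimates, reproducing only the case analysis that makes the $p$-dependence of $C_1,C_2$ transparent.
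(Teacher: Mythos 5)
The paper does not actually prove this lemma: it states that ``the proofs are given in \cite{BraCin}'', so the paper's own treatment is a bare citation. Since your proposal also ends by deferring the core estimates to \cite{BraCin}, at the level of what is actually established the two are equivalent, and citing is acceptable here.

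That said, if your sketch is meant to stand on its own it has a genuine gap. The reductions you do carry out are fine: the joint homogeneity (degree $0$ in $(a,b)$, degree $p$ in $(c,d)$) and the symmetry $(a,b,c,d)\mapsto(b,a,d,c)$ are correct, as is the observation that the negativity of the first term must be exploited rather than bounded by its absolute value. But the ``Key steps'' paragraph does not contain a proof: the two auxiliary inequalities are left as placeholders (``$x=cb^{(p-1)/p}$-type combinations'', ``$|x-y|^p\ge \mathrm{const}\,|x^p-y^p|^{p/(p-1)}/(\dots)$''), and they are not the estimates that actually close the argument. The compactness framing is also misleading on two counts: after normalizing $a+b=1$ and $c^p+d^p=1$ the domain $\{a,b>0\}$ is \emph{not} compact (the term $c^p/a^{p-1}$ blows up as $a\to0^+$, albeit in the harmless direction), and even on a compact set a continuity/compactness argument would only bound the supremum of (LHS$-$RHS), not show it is $\le 0$. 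Finally, the regime you flag as critical ($c$ small, $d$ not) is not the dangerous one — there the right-hand side is of order $d^p$ and comfortably dominates; the delicate configuration is $c\approx d>0$ with $a/b$ far from $1$, where the right-hand side degenerates and one must check quantitatively that $(1-t)^{p-1}(t^{1-p}-1)\ge 2C_1(1-t)^p/(1+t)^p$ for $t=b/a$, which is the computation \cite{BraCin} carries out via a one-parameter reduction and an explicit case analysis on the relative sizes of $d/c$ and $b/a$. To make the write-up honest, either reproduce that case analysis or present the lemma exactly as the paper does, as a quoted result of \cite{BraCin}, without the intervening heuristic steps.
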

\begin{lemma}\label{lemma 4.2}
Let $0<s<1$. Then for every $a,b>0$ we have
\begin{equation}\label{eq 4.2}
\frac{|a^s-b^s|}{a^s+b^s}\geq\frac{s}{2}\frac{a-b}{\mathrm{max}\{a,b\}}.
\end{equation}
\end{lemma}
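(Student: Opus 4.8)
The statement to prove is Lemma~\ref{lemma 4.2}: for $0<s<1$ and $a,b>0$,
$$\frac{|a^s-b^s|}{a^s+b^s}\geq\frac{s}{2}\,\frac{a-b}{\max\{a,b\}}.$$

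\textbf{Plan of proof.} The inequality is symmetric in the roles of $a$ and $b$ up to sign, and both sides vanish when $a=b$, so by symmetry I may assume $a>b>0$; then $\max\{a,b\}=a$ and both sides are positive. Dividing numerator and denominator on the left by $b^s$ and writing $x:=a/b>1$, the claim becomes the one-variable inequality
$$\frac{x^s-1}{x^s+1}\geq\frac{s}{2}\,\frac{x-1}{x}=\frac{s}{2}\Bigl(1-\frac{1}{x}\Bigr),\qquad x>1.$$
So the whole lemma reduces to a single-variable estimate, which is the natural thing to attack.

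\textbf{Key steps.} First I would reduce to the one-variable form above. Second, I would define $h(x):=\dfrac{x^s-1}{x^s+1}-\dfrac{s}{2}\Bigl(1-\dfrac{1}{x}\Bigr)$ on $[1,\infty)$ and note $h(1)=0$; it then suffices to show $h'(x)\geq0$ for $x>1$. Computing, $\frac{d}{dx}\frac{x^s-1}{x^s+1}=\frac{2s x^{s-1}}{(x^s+1)^2}$ and $\frac{d}{dx}\frac{s}{2}(1-x^{-1})=\frac{s}{2x^2}$, so
$$h'(x)=\frac{2s x^{s-1}}{(x^s+1)^2}-\frac{s}{2x^2}=\frac{s}{2x^2}\left(\frac{4x^{s+1}}{(x^s+1)^2}-1\right).$$
Hence $h'(x)\geq0$ is equivalent to $4x^{s+1}\geq (x^s+1)^2$, i.e. (taking square roots, everything positive) $2x^{(s+1)/2}\geq x^s+1$. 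Third, I would prove this last inequality: setting $u:=x^{(s+1)/2}\geq1$ and $v:=x^{(s-1)/2}\le 1$ (since $s<1$), we have $x^s=uv$ and $1=u/u$... more cleanly, by AM--GM, $x^s+1\geq$ is the wrong direction, so instead I argue: $2x^{(s+1)/2}-x^s-1 = -\bigl(x^{s/2}-x^{1/2}\bigr)^2 + (\text{correction})$? Let me instead write $w=x^{1/2}\ge 1$; then the inequality $2x^{(s+1)/2}\ge x^s+1$ becomes $2w^{s+1}\ge w^{2s}+1$, i.e. $0\ge w^{2s}-2w^{s+1}+1$. View the right side as a function of $w\ge1$ or better factor: $w^{2s}-2w^{s+1}+1$. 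Hmm, this does not factor as a perfect square unless $s=1$. The clean route is: the function $\phi(w)=w^{2s}-2w^{s+1}+1$ satisfies $\phi(1)=0$ and $\phi'(w)=2sw^{2s-1}-2(s+1)w^s=2w^{s}\bigl(sw^{s-1}-(s+1)\bigr)$, and $sw^{s-1}-(s+1)\le s-(s+1)=-1<0$ for $w\ge1$ since $w^{s-1}\le1$; thus $\phi$ is decreasing on $[1,\infty)$ with $\phi(1)=0$, giving $\phi(w)\le0$ as required. This closes the chain: $4x^{s+1}\ge(x^s+1)^2\Rightarrow h'\ge0\Rightarrow h\ge h(1)=0$.

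\textbf{Main obstacle.} There is no deep obstacle here; the only care needed is bookkeeping of monotonicity directions and making sure each auxiliary function is evaluated at the correct endpoint ($x=1$ or $w=1$, where everything vanishes) and has the claimed sign of derivative on the relevant half-line, using repeatedly that $x^{s-1}\le 1$ and $x^{(s-1)/2}\le1$ because $0<s<1$. An alternative, perhaps slicker, derivation avoids calculus entirely: from $2x^{(s+1)/2}\ge x^s+1$ one recognizes $x^{(s+1)/2}=\sqrt{x^s\cdot x}$ and $x^{(s+1)/2}=\sqrt{x^{s-1}}\cdot x \le x$ while also $x^{(s+1)/2}\ge x^s$ (as $\tfrac{s+1}{2}\ge s$ for $s\le1$), but turning these into the quadratic bound still essentially requires the monotonicity argument above, so I would present the calculus proof as the cleanest complete argument.
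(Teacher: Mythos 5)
Your proof is correct. Note first that the paper itself does not prove Lemma \ref{lemma 4.2}; it only cites \cite{BraCin}, so there is no in-paper argument to match against. Your reduction is sound: for $a\le b$ the right-hand side of \eqref{eq 4.2} is nonpositive (it has $a-b$, not $|a-b|$) while the left-hand side is nonnegative, so one may indeed assume $a>b$ and pass to $x=a/b>1$; the chain $h(1)=0$, $h'\ge 0 \Leftrightarrow 4x^{s+1}\ge (x^s+1)^2 \Leftrightarrow 2w^{s+1}\ge w^{2s}+1$ with $w=x^{1/2}$, and the monotonicity of $\phi(w)=w^{2s}-2w^{s+1}+1$ via $\phi'(w)=2w^s\bigl(sw^{s-1}-(s+1)\bigr)<0$ all check out. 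The only cosmetic issues are the loose appeal to ``symmetry'' (the inequality is not symmetric; the correct observation is the sign of the right-hand side) and some exploratory false starts in the middle that should be pruned. For comparison, the argument in \cite{BraCin} is shorter and avoids calculus on a quotient: for $a>b$, concavity of $t\mapsto t^s$ gives $a^s-b^s=\int_b^a s\,t^{s-1}\,dt\ge s\,a^{s-1}(a-b)$, and combining this with $a^s+b^s\le 2a^s$ yields \eqref{eq 4.2} in two lines. Your proof buys self-containedness at the cost of a longer computation; the concavity route is the one you would want to present in print.
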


\begin{proof}[\textbf{Proof of Theorem \ref{thm 1.1}}]
As like in the proof of the Euclidean result, using the results from the previous section, we first establish the inequality \eqref{eq main} with the constant $\frac{s^pC_1(n,p)}{(1-s)}$. Since this constant does not behave well when $s$ is very small, in the second part we establish the same inequality with the improved constant $\frac{C_2(n,p)}{s}$ for small values of $s$. Together, these two results imply that the inequality \eqref{eq main} holds with the constant $\frac{s^{p+1}C_1(n,p)+(1-s)C_2(n,p)}{2s(1-s)}$. We conclude the proof by noting that the quantity $s^{p+1}C_1(n,p)+(1-s)C_2(n,p)$ is bounded from below by a constant $C_(n,p)>0$ (see \cite{BraCin}, Remark 4.1), which is independent of $s$.\smallskip

\noindent \textbf{Proof of the first part:} By Proposition \ref{prop 3.1} we have
\begin{equation}\label{eq 4.3}
\int_{\HH^n}\int_{\HH^n}\frac{J_p(\delta_{\HH^n_+}(\xi)^s-\delta_{\HH^n_+}(\xi')^s)}{d({\xi}^{-1}\circ \xi')^{Q+sp}}(\phi(\xi)-\phi(\xi'))d\xi'd\xi=0,
\end{equation}
for every non-negative $\phi\in W^{s,p}(\HH^n_+)$ with compact support in $\HH^n_+$. Let $f\in C_c^\infty(\HH^n_+)$ and $\mathcal{O}$ be the support of $f$. For $\epsilon>0$ let us define the function
\begin{equation}\label{eq 4.4}
\phi=\frac{|f|^p}{((\delta_{\HH^n_+})^s+\epsilon)^{p-1}}.
\end{equation}
Clearly, $((\delta_{\HH^n_+})^s+\epsilon)^{1-p}\in L^\infty(\HH^n_+)$. Observe that the function $g(t)=(t+\epsilon)^{1-p}$ for $t>0$ is Lipschitz. Now for any compact subset $K$ of $\HH^n_+$,
\begin{multline*}
[((\delta_{\HH^n_+})^s+\epsilon)^{1-p}]^p_{s,p,K}\\
=\int_K\int_K\frac{|g(\delta_{\HH^n_+}(\xi)^s)-g(\delta_{\HH^n_+}(\xi)^s)|^p}{d({\xi}^{-1}\circ \xi')^{Q+sp}}d\xi'd\xi\leq C\int_K\int_K\frac{|\delta_{\HH^n_+}(\xi)^s-\delta_{\HH^n_+}(\xi')^s|^p}{d({\xi}^{-1}\circ \xi')^{Q+sp}}d\xi'd\xi.
\end{multline*}
The last integral is finite since $(\delta_{\HH^n_+})^s\in W^{s,p}_{\mathrm{loc}}(\HH^n_+)$, and this implies $(\delta_{\HH^n_+}^s+\epsilon)^{1-p}\in W^{s,p}_{\mathrm{loc}}(\HH^n_+)$. Hence the function $\phi$ defined in \eqref{eq 4.4} is admissible in \eqref{eq 4.3}, thanks to Lemma \ref{lemma 3.2} and Remark \ref{remark 2}.\smallskip

\noindent Decomposing the integral of \eqref{eq 4.3} we get
\begin{multline}\label{eq 4.5}
0=\int_{\HH^n_+}\int_{\HH^n_+}\frac{J_p(\delta_{\HH^n_+}(\xi)^s-\delta_{\HH^n_+}(\xi')^s)}{d({\xi}^{-1}\circ \xi')^{Q+sp}}\left(\frac{|f(\xi)|^p}{(\delta_{\HH^n_+}(\xi)^s+\epsilon)^{p-1}}-\frac{|f(\xi')|^p}{(\delta_{\HH^n_+}(\xi')^s+\epsilon)^{p-1}}\right)d\xi'd\xi\\
+2\int_{\mathcal{O}}\int_{{\HH^n_+}^C}\frac{J_p(\delta_{\HH^n_+}(\xi)^s-\delta_{\HH^n_+}(\xi')^s)}{d({\xi}^{-1}\circ \xi')^{Q+sp}}\frac{|f(\xi)|^p}{(\delta_{\HH^n_+}(\xi)^s+\epsilon)^{p-1}}d\xi'd\xi.
\end{multline}
One can observe that
$$\frac{J_p(\delta_{\HH^n_+}(\xi)^s-\delta_{\HH^n_+}(\xi')^s)}{(\delta_{\HH^n_+}(\xi)^s+\epsilon)^{p-1}}\leq1,$$
which gives
\begin{equation}\label{eq 4.6}
\int_{\mathcal{O}}\int_{{\HH^n_+}^C}\frac{J_p(\delta_{\HH^n_+}(\xi)^s-\delta_{\HH^n_+}(\xi')^s)}{d({\xi}^{-1}\circ \xi')^{Q+sp}}\frac{|f(\xi)|^p}{(\delta_{\HH^n_+}(\xi)^s+\epsilon)^{p-1}}d\xi'd\xi\leq \int_{\mathcal{O}}\int_{{\HH^n_+}^C}\frac{|f(\xi)|^p}{d({\xi}^{-1}\circ \xi')^{Q+sp}}d\xi'd\xi.
\end{equation}
Now, to estimate the first integral of \eqref{eq 4.5}, let us consider
$$a=\delta_{\HH^n_+}(\xi)^s+\epsilon,\ \ b=\delta_{\HH^n_+}(\xi')^s+\epsilon,\ \ c=|f(\xi)|,\ \ \mathrm{and}\ \ d=|f(\xi')|$$
in Lemma \ref{lemma 4.1}. Dividing both sides of \eqref{eq 4.1} by $d({\xi}^{-1}\circ \xi')^{Q+sp}$ and then integrating twice with respect to $\xi'$ and $\xi$ respectively on $\HH^n_+$, we get
\begin{multline}\label{eq 4.7}
I=\int_{\HH^n_+}\int_{\HH^n_+}\frac{J_p(\delta_{\HH^n_+}(\xi)^s-\delta_{\HH^n_+}(\xi')^s)}{d({\xi}^{-1}\circ \xi')^{Q+sp}}\left(\frac{|f(\xi)|^p}{(\delta_{\HH^n_+}(\xi)^s+\epsilon)^{p-1}}-\frac{|f(\xi')|^p}{(\delta_{\HH^n_+}(\xi')^s+\epsilon)^{p-1}}\right)d\xi'd\xi\\
\leq-C_1\int_{\HH^n_+}\int_{\HH^n_+}\left|\frac{\delta_{\HH^n_+}(\xi)^s-\delta_{\HH^n_+}(\xi')^s}{\delta_{\HH^n_+}(\xi)^s+\delta_{\HH^n_+}(\xi')^s+2\epsilon}\right|^p\left(|f(\xi)|^p+|f(\xi')|^p\right)\frac{d\xi'd\xi}{d({\xi}^{-1}\circ \xi')^{Q+sp}}\\
+C_2\int_{\HH^n_+}\int_{\HH^n_+}\frac{||f(\xi)|-|f(\xi)||^p}{d({\xi}^{-1}\circ \xi')^{Q+sp}}d\xi'd\xi.
\end{multline}
Applying \eqref{eq 4.6} and \eqref{eq 4.7} in \eqref{eq 4.5}, we obtain
\begin{equation}\label{eq 4.8}
C_1\int_{\HH^n_+}\int_{\HH^n_+}\left|\frac{x_1^s-{x_1'}^s}{x_1^s+{x_1'}^s}\right|^p\left(|f(\xi)|^p+|f(\xi')|^p\right)\frac{d\xi'd\xi}{d({\xi}^{-1}\circ \xi')^{Q+sp}}\leq C_2[f]^p_{s,p,\HH^n}.
\end{equation}
Here, we pass the limit $\epsilon\rightarrow0$ and apply Fatou's lemma. We have by symmetry,
\begin{multline*}
J=\int_{\HH^n_+}\int_{\HH^n_+}\left|\frac{x_1^s-{x_1'}^s}{x_1^s+{x_1'}^s}\right|^p\left(|f(\xi)|^p+|f(\xi')|^p\right)\frac{d\xi'd\xi}{d({\xi}^{-1}\circ \xi')^{Q+sp}}\\
=2\int_{\HH^n_+}\int_{\HH^n_+}\left|\frac{x_1^s-{x_1'}^s}{x_1^s+{x_1'}^s}\right|^p|f(\xi)|^p\frac{d\xi'd\xi}{d({\xi}^{-1}\circ \xi')^{Q+sp}}\\
\geq2\int_{\HH^n_+}\left(\int_{\{\xi'\in\HH^n_+:x_1'\leq x_1\}}\left|\frac{x_1^s-{x_1'}^s}{x_1^s+{x_1'}^s}\right|^p\frac{d\xi'}{d({\xi}^{-1}\circ \xi')^{Q+sp}}\right)|f(\xi)|^pd\xi.
\end{multline*}
Using Lemma \ref{lemma 4.2} with $a=x_1$ and $b=x_1'$ and then by Lemma \ref{lemma 3.3} we obtain
\begin{equation}\label{eq 4.9}
J\geq \frac{s^p}{2^{p-1}}\int_{\HH^n_+}\left(\int_{\{\xi'\in\HH^n_+:x_1'\leq x_1\}}\frac{|x_1-x_1'|^p}{d({\xi}^{-1}\circ \xi')^{Q+sp}}d\xi'\right)\frac{|f(\xi)|^p}{x_1^p}d\xi\geq \frac{s^p C}{2^{p-1}(1-s)}\int_{\HH^n_+}\frac{|f(\xi)|^p}{x_1^{sp}}d\xi.
\end{equation}
Inequality \eqref{eq main} with the constant $\frac{s^pC_1(n,p)}{1-s}$ follows from \eqref{eq 4.8} and \eqref{eq 4.9}. This completes the proof of the first part.\smallskip

\noindent \textbf{Proof of the second part:} For every $\xi\in \HH^n_+$, let us define the set
$$\Omega(\xi,r)=\{\xi'\in B(\xi,2r)^C:d({\xi}^{-1}\circ \xi')<2(x_1-x_1')\}.$$
If $\xi'\in\Omega(\xi,x_1)$, then $2x_1\leq d({\xi}^{-1}\circ \xi')<2(x_1-x_1')$, which implies $x_1'<0$. Therefore, $\Omega(\xi,x_1)\subset{\HH^n_+}^C$ and we have for $f\in C_c^\infty(\HH^n_+)$,
\begin{multline*}
\int_{\HH^n}\int_{\HH^n}\frac{|f(\xi)-f(\xi')|^p}{d({\xi}^{-1}\circ \xi')^{Q+sp}}d\xi' d\xi
\geq \int_{\HH^n_+}|f(\xi)|^p\left(\int_{{\HH^n_+}^C}\frac{d\xi'}{d({\xi}^{-1}\circ \xi')^{Q+sp}}\right)d\xi\\
\geq \int_{\HH^n_+}|f(\xi)|^p\left(\int_{\Omega(\xi,x_1)}\frac{d\xi'}{d({\xi}^{-1}\circ \xi')^{Q+sp}}\right)d\xi=\int_{\HH^n_+}|f(\xi)|^p\left(\int_{\Omega(0,x_1)}\frac{d\xi'}{d(\xi')^{Q+sp}}\right)d\xi.
\end{multline*}
Using the polar coordinates for $\HH^n$ with $\omega=(\omega_{x_1},\ldots,\omega_{x_n},\omega_{y_1},\ldots,\omega_{y_n},\omega_{t})\in S_{\HH^n}$, we obtain
\begin{multline*}
=\int_{\HH^n_+}|f(\xi)|^p\left(\int_{\{\omega\in S_{\HH^n}:d(\omega)<-2\omega_{x_1}\}}dS_{\HH^n}(\omega)\right)\left(\int_{2x_1}^\infty r^{-sp-1}dr\right)d\xi\\
=C\frac{1}{sp}\frac{1}{2^{sp}}\int_{\HH^n_+}\frac{|f(\xi)|^p}{x_1^{sp}}d\xi\geq\frac{C}{p2^p}\frac{1}{s}\int_{\HH^n_+}\frac{|f(\xi)|^p}{x_1^{sp}}d\xi.
\end{multline*}
This completes the proof.
\end{proof}

Next, we prove Theorem \ref{thm alpha>0}. The proof follows from Theorem \ref{thm 1.1}, i.e., from the case $\alpha=0$.

\begin{proof}[\textbf{Proof of Theorem \ref{thm alpha>0}.}]
Let $g\in C_c^\infty(\HH^n_+)$. Clearly, for $\alpha\geq0$,
$$\int_{\HH^n}\int_{\HH^n}\frac{|g(\xi)-g(\xi')|^p}{d({\xi}^{-1}\circ \xi')^{Q+sp+\alpha}}d\xi'd\xi\leq \int_{\HH^n}\int_{\HH^n}\frac{|g(\xi)-g(\xi')|^p}{d({\xi}^{-1}\circ \xi')^{Q+sp}|z'-z|^\alpha}d\xi'd\xi.$$
Let us consider $s'=s$ and $p'=p+\frac{\alpha}{s}$, when $s\geq\frac{1}{p}$, and $s'=\frac{1}{1+\alpha}\left(s+\frac{\alpha}{p}\right)$ and $p'=p(1+\alpha)$, when $s<\frac{1}{p}$. Note that $p\leq p'\leq p(1+\alpha)$, $0<s\leq s'<1$, and $s'p'=sp+\alpha$. To prove \eqref{eq alpha>0}, it is enough to prove the inequality
\begin{equation}\label{eq 4.10}
\frac{C(n,p,\alpha)}{(s+\frac{\alpha}{p})(1-s)}\int_{\HH^n_+}\frac{|g(\xi)|^p}{x_1^{s'p'}}d\xi\leq\int_{\HH^n}\int_{\HH^n}\frac{|g(\xi)-g(\xi')|^p}{d({\xi}^{-1}\circ \xi')^{Q+s'p'}}d\xi'd\xi.
\end{equation}
Let $f(\xi)=|g(\xi)|^{\frac{p}{p'}}$. The elementary inequality $(a+b)^t\leq a^t+b^t$ for $a,b\geq0$ and $0<t\leq1$ gives
\begin{equation}\label{eq fg}
|f(\xi)-f(\xi')|^{p'}\leq |f(\xi)-f(\xi')|^p.
\end{equation}
Here, we consider $f(\xi)\geq f(\xi')$ (by symmetry), $t=p/p'$, $a=f(\xi)^{(1/t)}-f(\xi')^{(1/t)}$, and $b=f(\xi')^{(1/t)}$. We also have,
\begin{equation}\label{eq constant}
 \frac{C(n,p')}{(s')(1-s')}\geq\frac{C(n,p,\alpha)}{(s+\frac{\alpha}{p})(1-s)}.
\end{equation}
By a density argument, \eqref{eq main} is also true for the functions in $C_c(\HH^n_+)$. Since $f\in C_c(\HH^n_+)$, we obtain from inequality \eqref{eq main} for $s'$ and $p'$ defined above,
$$\frac{C(n,p')}{s'(1-s')}\int_{\HH^n_+}\frac{|f(\xi)|^{p'}}{x_1^{s'p'}}d\xi\leq \int_{\HH^n}\int_{\HH^n}\frac{|f(\xi)-f(\xi')|^{p'}}{d({\xi}^{-1}\circ \xi')^{Q+s'p'}}d\xi' d\xi$$
Therefore, inequality \eqref{eq 4.10} follows, if we put $f=|g|^{p/p'}$ above, and use \eqref{eq fg} and \eqref{eq constant}.
\end{proof}

\section{Sharp constant for the fractional Hardy's inequality}\label{sec 5}
Although the general inequality \eqref{eq alpha>0} follows from \eqref{eq main}, that is, from case $\alpha=0$, the computation of the sharp constant for \eqref{eq alpha>0} may not necessarily follow from the sharp constant results for \eqref{eq main}. Before presenting the results for the Heisenberg group, let us first discuss some one-dimensional results relevant to our proofs. For any $\beta\in\RR$, let us define the function
$$u_\beta(\tau)=\tau^\beta,\ \ \tau\in\RR_+.$$
For $0<\epsilon\ll1$, let us define the following set in $\RR_+$:
\begin{equation}\label{set I_ep}
I_\epsilon(t)=\left(\frac{t}{1+\epsilon},(1+\epsilon)t\right).
\end{equation}
\begin{proposition}\label{prop 5.1}
\emph{(i)} Let $1<p<\infty$, $0<s<1$, and $-\frac{1}{p-1}<\beta<\frac{sp}{p-1}$, and we define the quantities
\begin{equation}
\lambda(\beta,s,p)=2\int_0^1\frac{J_p(1-\tau^\beta)}{(1-\tau)^{1+sp}}(1-\tau^{sp-1-\beta(p-1)})d\tau+\frac{2}{sp},
\end{equation}
and
\begin{equation}
\lambda_\epsilon(\beta,s,p)=2\int_0^{\frac{1}{1+\epsilon}}\frac{J_p(1-\tau^\beta)}{(1-\tau)^{1+sp}}d\tau+2\int_{1+\epsilon}^\infty\frac{J_p(1-\tau^\beta)}{(1-\tau)^{1+sp}}d\tau+\frac{2}{sp}.
\end{equation}
We have
\begin{equation}\label{eq beta}
\lim_{\epsilon\rightarrow0}\lambda_\epsilon(\beta,s,p)=\lambda(\beta,s,p),
\end{equation}
and the function $u_\beta(\tau)$ is a weak solution of the equation
\begin{equation}\label{eq p-lap}
(-\Delta_p)^su=\lambda\frac{J_p(u)}{\delta_{\RR_+}^{sp}} \ \ \mathrm{in}\ \RR_+,
\end{equation}
with $\lambda=\lambda(\beta,s,p)$. Moreover, we have
\begin{equation}\label{eq F-ep}
F_\epsilon(\tau):=2\int_{\RR\setminus I_\epsilon(\tau)}\frac{J_p(u_\beta(\tau)-u_\beta(\tau'))}{|\tau-\tau'|^{1+sp}}d\tau'=\lambda_\epsilon(\beta,s,p)\frac{u_\beta(\tau)^{p-1}}{\tau^{sp}},\ \ \forall\,\tau\in\RR_+,\ 0<\epsilon<1,
\end{equation}
and this family of functions converges to
$$F_0(\tau)=\lambda(\beta,s,p)\frac{u_\beta(\tau)^{p-1}}{\tau^{sp}},$$
uniformly on compact subsets of $\RR_+$, as $\epsilon$ goes to $0$.\smallskip

\noindent \emph{(ii)} The function $\beta\mapsto\lambda(\beta,s,p)$ is maximal at $\frac{sp-1}{p}$, i.e.,
\begin{equation}
\lambda(\beta,s,p)\leq \lambda\left(\frac{sp-1}{p},s,p\right)=2\int_0^1\frac{\left|1-\tau^{\frac{sp-1}{p}}\right|^p}{(1-\tau)^{1+sp}}d\tau+\frac{2}{sp}.
\end{equation}
\end{proposition}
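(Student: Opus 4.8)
The plan is to derive everything in the Proposition from a single explicit scaling computation of $F_\epsilon(\tau)$. Fix $\tau\in\RR_+$ and substitute $\tau'=\tau\sigma$ in the integral defining $F_\epsilon$. Using that $J_p$ is odd and positively homogeneous of degree $p-1$, that $u_\beta(\tau')=\tau^\beta\sigma^\beta$ for $\sigma>0$ while $u_\beta(\tau')=0$ for $\sigma<0$, and that $|\tau-\tau'|=\tau|1-\sigma|$, one finds $F_\epsilon(\tau)=2\tau^{\beta(p-1)-sp}\int_{\RR\setminus[\frac1{1+\epsilon},1+\epsilon]}\frac{J_p(1-\sigma_+^\beta)}{|1-\sigma|^{1+sp}}\,d\sigma$. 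Splitting this $\sigma$-integral into the range $\sigma<0$ (which equals $\frac1{sp}$ since $\int_{-\infty}^0(1-\sigma)^{-1-sp}\,d\sigma=\frac1{sp}$), the range $\sigma\in(0,\frac1{1+\epsilon})$, and the range $\sigma\in(1+\epsilon,\infty)$, shows that the $\sigma$-integral equals $\tfrac12\lambda_\epsilon(\beta,s,p)$, hence $F_\epsilon(\tau)=\lambda_\epsilon(\beta,s,p)\,\tau^{\beta(p-1)-sp}=\lambda_\epsilon(\beta,s,p)\,u_\beta(\tau)^{p-1}/\tau^{sp}$, which is exactly \eqref{eq F-ep}.

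To establish \eqref{eq beta}, in the tail integral $\int_{1+\epsilon}^\infty J_p(1-\tau^\beta)|1-\tau|^{-1-sp}\,d\tau$ I substitute $\tau\mapsto 1/\tau$; using $J_p(1-\tau^{-\beta})=-\tau^{-\beta(p-1)}J_p(1-\tau^\beta)$ and $|1-\tau^{-1}|=\tau^{-1}|1-\tau|$, this tail becomes $-\int_0^{1/(1+\epsilon)}\frac{J_p(1-\tau^\beta)}{(1-\tau)^{1+sp}}\tau^{sp-1-\beta(p-1)}\,d\tau$, so $\lambda_\epsilon(\beta,s,p)=\frac2{sp}+2\int_0^{1/(1+\epsilon)}\frac{J_p(1-\tau^\beta)}{(1-\tau)^{1+sp}}\bigl(1-\tau^{sp-1-\beta(p-1)}\bigr)\,d\tau$. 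The integrand is absolutely integrable on $(0,1)$: near $\tau=1$ it is $O(|1-\tau|^{p-1-sp})$, integrable because $p(1-s)>0$; near $\tau=0$ the powers produced by the two bracketed factors — bounded $J_p(1-\tau^\beta)$ together with $\tau^{sp-1-\beta(p-1)}$ when $\beta\ge0$, and behaviour like $\tau^{\beta(p-1)}$ or $\tau^{sp-1}$ when $\beta<0$ — are all integrable precisely because $-\frac1{p-1}<\beta<\frac{sp}{p-1}$ and $0<s<1$. Then $\lambda_\epsilon\to\lambda(\beta,s,p)$ by dominated convergence as $\frac1{1+\epsilon}\uparrow1$, and since $F_\epsilon(\tau)=\lambda_\epsilon\,\tau^{\beta(p-1)-sp}$ with $\tau^{\beta(p-1)-sp}$ bounded on each compact $K\subset\RR_+$, $F_\epsilon\to F_0$ uniformly on $K$. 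For the weak-solution claim \eqref{eq p-lap}: for nonnegative $\phi\in C_c^\infty(\RR_+)$ the double integral $\iint\frac{J_p(u_\beta(\tau)-u_\beta(\tau'))}{|\tau-\tau'|^{1+sp}}(\phi(\tau)-\phi(\tau'))\,d\tau'd\tau$ is absolutely convergent (near the diagonal since $\phi$ is Lipschitz, $u_\beta\in C^1$ away from $0$ and $p-1-sp>-1$; at infinity and near $0$ since $\beta(p-1)<sp$ and $\beta(p-1)>-1$ — an argument parallel to Lemma \ref{Lemma 2.1}), hence equals the limit as $\epsilon\to0$ of the same integral over $\{\tau'\notin I_\epsilon(\tau)\}$. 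That set is symmetric in $(\tau,\tau')$ and the kernel antisymmetric, so symmetrizing turns this into $\lim_{\epsilon\to0}\int_{\RR_+}F_\epsilon(\tau)\phi(\tau)\,d\tau=\int_{\RR_+}F_0(\tau)\phi(\tau)\,d\tau=\lambda(\beta,s,p)\int_{\RR_+}\frac{J_p(u_\beta(\tau))\phi(\tau)}{\tau^{sp}}\,d\tau$.

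For part (ii), using the formula $\lambda(\beta,s,p)=\frac2{sp}+2\int_0^1\frac{J_p(1-\tau^\beta)}{(1-\tau)^{1+sp}}\bigl(1-\tau^{sp-1-\beta(p-1)}\bigr)\,d\tau$, it suffices to prove the pointwise bound $g(\tau,\beta)\le g(\tau,\beta^*)$ for every $\tau\in(0,1)$, where $g(\tau,\beta):=J_p(1-\tau^\beta)\bigl(1-\tau^{sp-1-\beta(p-1)}\bigr)$ and $\beta^*=\frac{sp-1}{p}$; one checks directly that $g(\tau,\beta^*)=|1-\tau^{\beta^*}|^p$. Setting $a=\tau^\beta>0$ and $m=\tau^{(sp-1)/p}$ (so $\tau^{sp-1}=m^p$) and using the identity $J_p(1-1/a)=-a^{-(p-1)}J_p(1-a)$, one rewrites $g(\tau,\beta)=G(a)$ with $G(a):=J_p(1-a)\bigl(1-m^p a^{-(p-1)}\bigr)$. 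A short differentiation gives the clean formula $G'(a)=(p-1)|1-a|^{p-2}\bigl(m^p a^{-p}-1\bigr)$, so $G$ is increasing on $(0,m)$ and decreasing on $(m,\infty)$ — the point $a=1$, where $G$ may have a corner, being harmless since $G$ is continuous there and $G(1)=0$. Hence $G(a)\le G(m)=|1-m|^p$ for all $a>0$, i.e. $g(\tau,\beta)\le g(\tau,\beta^*)$, and integrating against $(1-\tau)^{-1-sp}$ gives $\lambda(\beta,s,p)\le\lambda(\beta^*,s,p)=2\int_0^1\frac{|1-\tau^{(sp-1)/p}|^p}{(1-\tau)^{1+sp}}\,d\tau+\frac2{sp}$.

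The routine ingredients are the scaling change of variables and the elementary monotonicity of $G$. The two places that need genuine care are: (a) the substitution $\tau\mapsto1/\tau$ that exhibits the cancellation between the two pieces $\int_0^{1/(1+\epsilon)}$ and $\int_{1+\epsilon}^\infty$ — each of which individually diverges as $\epsilon\to0$ when $p(1-s)<1$, while their sum remains integrable — and (b) the passage from the absolutely convergent double integral to $\lim_\epsilon\int_{\RR_+}F_\epsilon\phi$, i.e. justifying that the multiplicative excision $\{\tau'\notin I_\epsilon(\tau)\}$ is a legitimate principal-value exhaustion on which symmetrization is valid. I expect (a) to be the main conceptual point; everything else is bookkeeping with exponents, all of which is controlled by $0<s<1$ and $-\frac1{p-1}<\beta<\frac{sp}{p-1}$.
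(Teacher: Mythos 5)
Your proof is correct. Note that the paper itself gives no proof of Proposition \ref{prop 5.1}: it imports the statement wholesale from \cite{Brasco2}, remarking only that ``the proof of the above proposition [is] given in \cite{Brasco2}.'' Your argument is a faithful, self-contained reconstruction of that reference's one-dimensional computation: the scaling $\tau'=\tau\sigma$ reducing $F_\epsilon$ to a fixed $\sigma$-integral (which is exactly why the multiplicative excision $I_\epsilon(\tau)$ is the right principal-value exhaustion), the inversion $\tau\mapsto 1/\tau$ producing the factor $1-\tau^{sp-1-\beta(p-1)}$ and exhibiting the cancellation of the two individually divergent pieces when $p(1-s)\le 1$, and the pointwise maximization of $G(a)=J_p(1-a)(1-m^pa^{1-p})$ at $a=m$ via the clean formula $G'(a)=(p-1)|1-a|^{p-2}(m^pa^{-p}-1)$. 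All the exponent checks (integrability at $\tau=0$, $\tau=1$, and $\infty$ under $-\tfrac1{p-1}<\beta<\tfrac{sp}{p-1}$) are right, and the symmetrization step is legitimate because the excised set is symmetric in $(\tau,\tau')$ and the full double integral is absolutely convergent.
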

The fractional Laplacian operator $(-\Delta_p)^s$, the notion of a weak solution of \eqref{eq p-lap}, and the proof of the above proposition are given in \cite{Brasco2}. The following result is useful for converting certain integrals over the Heisenberg group into one-dimensional integrals.
\begin{lemma}\label{lemma 5.1}
Let $n\geq1$, $\theta>0$, and $\alpha\geq0$. We have for every $m>0$
\begin{equation}\label{eq 5.7}
\int_{\RR^{2n-1}}\int_{-\infty}^\infty\frac{d\tilde{x}dt}{((m^2+|\tilde{x}|^2)^2+t^2)^{\frac{Q+\theta}{4}}(m^2+|\tilde{x}|^2)^{\frac{\alpha}{2}}}=\frac{2|S_{2n-2}|}{m^{1+\theta+\alpha}}\mathcal{I}(Q-2,\theta,\alpha),
\end{equation}
where $\tilde{x}\in\RR^{2n-1}$, $S_{2n-2}$ is the unit sphere in $\RR^{2n-1}$, and for $k\in\NN$ with $k\geq2$, the quantity $\mathcal{I}(k,\theta,\alpha)$ is defined in \eqref{eq I} in Introduction.
\end{lemma}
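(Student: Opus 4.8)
The plan is to evaluate the left-hand side of \eqref{eq 5.7} directly, using Tonelli's theorem (the integrand is nonnegative) to integrate first in $t$ and then in $\tilde{x}$, extracting a power of $m$ by a scaling substitution at each stage.

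First I would fix $\tilde{x}\in\RR^{2n-1}$ and abbreviate $a=m^2+|\tilde{x}|^2>0$. The substitution $t=au$ in the inner integral gives
\[
\int_{-\infty}^\infty\frac{dt}{(a^2+t^2)^{\frac{Q+\theta}{4}}}=a^{\,1-\frac{Q+\theta}{2}}\int_{-\infty}^\infty\frac{du}{(1+u^2)^{\frac{Q+\theta}{4}}}=2\,a^{\,1-\frac{Q+\theta}{2}}\int_0^\infty\frac{du}{(1+u^2)^{\frac{Q+\theta}{4}}},
\]
where the $u$-integral is finite because $Q+\theta=2n+2+\theta>2$. Since $a^{\,1-\frac{Q+\theta}{2}}a^{-\frac{\alpha}{2}}=(m^2+|\tilde{x}|^2)^{-\frac{Q+\theta+\alpha-2}{2}}$, substituting back turns the left-hand side of \eqref{eq 5.7} into
\[
2\left(\int_0^\infty\frac{du}{(1+u^2)^{\frac{Q+\theta}{4}}}\right)\int_{\RR^{2n-1}}\frac{d\tilde{x}}{(m^2+|\tilde{x}|^2)^{\frac{Q+\theta+\alpha-2}{2}}}.
\]

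Next I would pass to polar coordinates on $\RR^{2n-1}$, writing $d\tilde{x}=|S_{2n-2}|\,\rho^{\,2n-2}\,d\rho$ with $\rho=|\tilde{x}|$, and then substitute $\rho=mr$. Recalling $Q-2=2n$, this yields
\[
\int_{\RR^{2n-1}}\frac{d\tilde{x}}{(m^2+|\tilde{x}|^2)^{\frac{Q+\theta+\alpha-2}{2}}}=|S_{2n-2}|\,m^{\,2n-1-(2n+\theta+\alpha)}\int_0^\infty\frac{r^{\,2n-2}\,dr}{(1+r^2)^{\frac{2n+\theta+\alpha}{2}}}=\frac{|S_{2n-2}|}{m^{\,1+\theta+\alpha}}\int_0^\infty\frac{r^{\,(Q-2)-2}\,dr}{(1+r^2)^{\frac{(Q-2)+\theta+\alpha}{2}}},
\]
the $r$-integral converging since $2n-2\geq0$ and $2+\theta+\alpha>1$. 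Finally I would collect the two factors and use $\frac{Q+\theta}{4}=\frac{(Q-2)+2+\theta}{4}$ to recognize the product of the $u$- and $r$-integrals as exactly $\mathcal{I}(Q-2,\theta,\alpha)$ from \eqref{eq I}; this gives $\frac{2|S_{2n-2}|}{m^{1+\theta+\alpha}}\mathcal{I}(Q-2,\theta,\alpha)$, as claimed.

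There is really no substantial obstacle here: every step is an elementary change of variables, and Tonelli's theorem legitimizes the iterated integration because the integrand is nonnegative. The only thing demanding a bit of attention is the bookkeeping of exponents through the two scalings — in particular keeping track that $Q=2n+2$, so that the parameter $k=Q-2=2n$ plugged into $\mathcal{I}(k,\theta,\alpha)$ matches both the exponent $\frac{k+2+\theta}{4}=\frac{Q+\theta}{4}$ arising from the $t$-integration and the exponent $\frac{k+\theta+\alpha}{2}$ and weight $r^{k-2}$ arising from the radial $\tilde{x}$-integration — together with the (automatic) convergence of the resulting one-dimensional integrals, valid for all $n\geq1$, $\theta>0$, and $\alpha\geq0$.
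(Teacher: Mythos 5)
Your proof is correct and follows essentially the same route as the paper: a direct evaluation by elementary changes of variables that separates the integral into the product of the two one-dimensional integrals defining $\mathcal{I}(Q-2,\theta,\alpha)$. You merely perform the substitutions in a different order (normalizing $t$ by $m^2+|\tilde{x}|^2$ first, then scaling $\tilde{x}$ by $m$, whereas the paper scales $(\tilde{x},t)$ by the dilation $\delta_{1/m}$ first), and the exponent bookkeeping checks out.
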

\begin{proof}
We have
\begin{multline*}
I=\int_{\RR^{2n-1}}\int_{-\infty}^\infty\frac{d\tilde{x}dt}{((m^2+|\tilde{x}|^2)^2+t^2)^{\frac{Q+\theta}{4}}(m^2+|\tilde{x}|^2)^{\frac{\alpha}{2}}}\\
=\frac{1}{m^{Q+\theta+\alpha}}\int_{\RR^{2n-1}}\int_{-\infty}^\infty\frac{d\tilde{x}dt}{((1+|\frac{\tilde{x}}{m}|^2)^2+\frac{t^2}{m^4})^{\frac{Q+\theta}{4}}(1+|\frac{\tilde{x}}{m}|^2)^{\frac{\alpha}{2}}}.
\end{multline*}
Here $\frac{\tilde{x}}{m}=(\frac{\tilde{x}_1}{m},\frac{\tilde{x}_2}{m},\ldots,\frac{\tilde{x}_{2n-1}}{m})$. Using the changes of variable $\frac{\tilde{x}}{m}=\tilde{x}'$ and $\frac{t}{m^2}=t'$, we get
\begin{multline*}
I=\frac{1}{m^{1+\theta+\alpha}}\int_{\RR^{2n-1}}\int_{-\infty}^\infty\frac{d\tilde{x}'dt'}{((1+|\tilde{x}'|^2)^2+t'^2)^{\frac{Q+\theta}{4}}(1+|\tilde{x}'|^2)^{\frac{\alpha}{2}}}\\
=\frac{1}{m^{1+\theta+\alpha}}\int_{\RR^{2n-1}}\left(\int_{-\infty}^\infty\frac{dt'}{\left(1+\frac{t'^2}{(1+|\tilde{x}'|^2)^2}\right)^{\frac{Q+\theta}{4}}}\right)\frac{d\tilde{x}'}{(1+|\tilde{x}'|^2)^{\frac{Q+\theta+\alpha}{2}}}.
\end{multline*}
Again, using the change of variable $t=t'/(1+|\tilde{x}'|^2)$, we obtain
$$I=\frac{1}{m^{1+\theta+\alpha}}\int_{-\infty}^\infty\frac{dt}{(1+t^2)^{\frac{Q+\theta}{4}}}\int_{\RR^{2n-1}}\frac{d\tilde{x}'}{(1+|\tilde{x}'|^2)^{\frac{Q-2+\theta+\alpha}{2}}}.$$
The final conclusion follows by converting the last integral to polar coordinates in $\RR^{2n-1}$.
\end{proof}

For any $\beta\in\RR$, let us define the following function in $\HH^n$:
$$u_{\beta}(\xi):=\delta_{\HH^n_+}(\xi)^\beta={x_1}_+^\beta.$$
Since the function $u_\beta$ does not belong to the space $W^{s,p,\alpha}_{\mathrm{loc}(x_1)}(\HH^n_+)$, we work with the cut-off function defined below. For any $R_0>0$, we define the following sets:
$$D_{R_0}:=\{\xi=(x,y,t)\in\HH^n:|x_i|,|y_i|\leq R_0,|t|\leq R_0^2\},$$
and
$$D_{R_0,t}:=\{\xi=(x,y,t)\in\HH^n:|x_i|,|y_i|\leq R_0,|t|\leq R_0^2+R_0\},$$
and define the function
$$u_{\beta,R_0}=u_\beta\psi_{R_0},$$
where $\psi_{R_0}\in C_c^\infty(D_{R_0+1,t})$ be a function satisfies $0\leq\psi_{R_0}\leq1$, $\psi\equiv1$ in $D_{R_0}$, $|\frac{\partial\psi_{R_0}}{\partial x_i}|,|\frac{\partial\psi_{R_0}}{\partial y_i}|\leq C_0$, and $|\frac{\partial\psi_{R_0}}{\partial t}|\leq\frac{C_0}{R_0+1}$ for some constant $C_0>0$, independent of $R_0$. The following lemma is useful to show that the function $u_{\beta,R_0}\in W^{s,p,\alpha}_{\mathrm{loc}(x_1)}(\HH^n_+)$.
\begin{lemma}\label{lemma 5.2}
Let $R_0>0$ and $0<r<R$. Then there exists a constant $C(r,R)>0$, independent of $R_0$, such that
$$|\nabla_{\HH^n}\,u_{\beta,R_0}(\xi)|\leq C(r,R),$$
for all $\xi\in\mathscr{D}_{r,R}\cap D_{R_0+1,t}$. Moreover, $|\nabla_{\HH^n}\,u_{\beta,R_0}(\xi)|$ is zero outside $D_{R_0+1,t}$.
\end{lemma}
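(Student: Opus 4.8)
The plan is a direct computation based on the Leibniz rule for the left–invariant vector fields $X_i,Y_i$, combined with the explicit form $u_\beta(\xi)={x_1}_+^\beta$ and the prescribed bounds on $\psi_{R_0}$. First I would dispose of the claim outside $D_{R_0+1,t}$: since $\psi_{R_0}\in C_c^\infty(D_{R_0+1,t})$, both $\psi_{R_0}$ and all of its first–order partial derivatives vanish on the complement of $D_{R_0+1,t}$, hence $u_{\beta,R_0}\equiv 0$ there and consequently $\nabla_{\HH^n}u_{\beta,R_0}\equiv 0$ outside $D_{R_0+1,t}$.

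For $\xi\in\mathscr{D}_{r,R}\cap D_{R_0+1,t}$ (so $x_1\in(r,R)$, in particular $x_1>0$), since $X_i$ and $Y_i$ are first–order differential operators they obey the product rule, giving, as vectors,
$$
\nabla_{\HH^n}u_{\beta,R_0}=\psi_{R_0}\,\nabla_{\HH^n}u_\beta+u_\beta\,\nabla_{\HH^n}\psi_{R_0},
$$
so that $|\nabla_{\HH^n}u_{\beta,R_0}|\le|\psi_{R_0}|\,|\nabla_{\HH^n}u_\beta|+|u_\beta|\,|\nabla_{\HH^n}\psi_{R_0}|$. I would estimate the two terms separately. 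On $\mathscr{D}_{r,R}$ the function $u_\beta(\xi)=x_1^\beta$ depends only on $x_1$, so $X_1u_\beta=\beta x_1^{\beta-1}$ while $X_iu_\beta=Y_iu_\beta=0$ for all remaining indices; hence $|\nabla_{\HH^n}u_\beta(\xi)|=|\beta|\,x_1^{\beta-1}\le|\beta|\max\{r^{\beta-1},R^{\beta-1}\}$, and of course $|\psi_{R_0}|\le 1$. For the second term, $|u_\beta(\xi)|=x_1^\beta\le\max\{r^\beta,R^\beta\}$ on $\mathscr{D}_{r,R}$, and it remains to see that $\nabla_{\HH^n}\psi_{R_0}$ is bounded uniformly in $R_0$ on $D_{R_0+1,t}$: from $X_i\psi_{R_0}=\partial_{x_i}\psi_{R_0}+2y_i\partial_t\psi_{R_0}$ and $Y_i\psi_{R_0}=\partial_{y_i}\psi_{R_0}-2x_i\partial_t\psi_{R_0}$, together with $|x_i|,|y_i|\le R_0+1$ on $D_{R_0+1,t}$, $|\partial_{x_i}\psi_{R_0}|,|\partial_{y_i}\psi_{R_0}|\le C_0$ and $|\partial_t\psi_{R_0}|\le C_0/(R_0+1)$, one gets $|X_i\psi_{R_0}|,|Y_i\psi_{R_0}|\le 3C_0$, hence $|\nabla_{\HH^n}\psi_{R_0}|\le 3C_0\sqrt{2n}$. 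Collecting these bounds yields $|\nabla_{\HH^n}u_{\beta,R_0}(\xi)|\le|\beta|\max\{r^{\beta-1},R^{\beta-1}\}+3\sqrt{2n}\,C_0\max\{r^\beta,R^\beta\}=:C(r,R)$, which is independent of $R_0$.

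There is no real obstacle here; the only point meriting attention is the balance in the second term: the smallness $\sim (R_0+1)^{-1}$ of $\partial_t\psi_{R_0}$ is exactly compensated by the size $\lesssim R_0+1$ of $x_i,y_i$ on the support $D_{R_0+1,t}$, and this is what makes the estimate uniform in $R_0$. (One should also read the harmless misprint ``$\psi\equiv1$ in $D_{R_0}$'' in the construction of $\psi_{R_0}$ as ``$\psi_{R_0}\equiv1$ in $D_{R_0}$'', which does not affect the argument.)
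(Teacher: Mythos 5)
Your proof is correct and follows essentially the same route as the paper: vanishing outside $D_{R_0+1,t}$ from the support of $\psi_{R_0}$, then the Leibniz rule for $X_i,Y_i$ on $\mathscr{D}_{r,R}$, with the factor $|y_i|\le R_0+1$ exactly cancelling the bound $|\partial_t\psi_{R_0}|\le C_0/(R_0+1)$ to give a constant independent of $R_0$. The only cosmetic difference is that you bound the full gradient vector (picking up a harmless $\sqrt{2n}$) whereas the paper estimates each component $X_iu_{\beta,R_0}$, $Y_ju_{\beta,R_0}$ separately.
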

\begin{proof}
Since the support of $u_{\beta,R_0}$ is $D_{R_0+1,t}$, $|\nabla_{\HH^n}\,u_{\beta,R_0}(\xi)|=0$ outside $D_{R_0+1,t}$. Let $\xi\in\mathscr{D}_{r,R}$, we have
\begin{multline*}
|X_1u_{\beta,R_0}(\xi)|=\left|\left(\frac{\partial}{\partial x_1}+2y_1\frac{\partial}{\partial t}\right)(u_\beta\psi_{R_0})\right|=\left|\beta x_1^{\beta-1}\psi_{R_0}+u_\beta\frac{\partial\psi_{R_0}}{\partial x_1}+2y_1u_\beta\frac{\partial\psi_{R_0}}{\partial t}\right|\\
\leq|\beta|\max\{r^{\beta-1},R^{\beta-1}\}+\max\{r^{\beta},R^{\beta}\}\left(C_0+2(R_0+1)\frac{C_0}{R_0+1}\right)\\
=|\beta|\max\{r^{\beta-1},R^{\beta-1}\}+3C_0\max\{r^{\beta},R^{\beta}\},
\end{multline*}
and for $2\leq i\leq n$, and $1\leq j\leq n$,
$$|X_iu_{\beta,R_0}(\xi)|,|Y_ju_{\beta,R_0}(\xi)|\leq3C_0\max\{r^{\beta},R^{\beta}\}.$$
These conclude the proof.
\end{proof}
Before proving $u_{\beta,R_0}\in W^{s,p,\alpha}_{\mathrm{loc}(x_1)}(\HH^n_+)$, let us recall the following useful result about the structure of the Heisenberg group. The proof is given in \cite[pp. 727]{Bonfiglioli} for any general Homogeneous Carnot Group.
\begin{theorem}\label{thm 5.2}
Let $g$ denote the Lie algebra of vector fields of $\HH^n$, and let $\mathrm{Exp}:g\rightarrow\HH^n$ be the usual exponential map. Then $\exists$ $M\in\NN$, depends only on $\HH^n$, such that for any $h\in\HH^n$,
\begin{equation}\label{eq h}
h=h_1\circ\ldots\circ h_M,
\end{equation}
\begin{equation}
d(h_j)\leq c_0d(h),\ \ \mathrm{for}\ j=1,\ldots,M,
\end{equation}
where $c_0>0$ is a constant independent of $h$ and $h_j$, $h_j=\mathrm{Exp}(t_jX_{i_j})=t_je_{i_j}$, for some $t_j\in\RR$, $i_j\in(1,\ldots,2n)$, $\{e_i\}_{i=1}^{2n+1}$ is the standard basis on $\RR^{2n+1}$, and $d(h_j)=|t_j|$. Here $X_i=Y_{i-n}$ for $i=n+1,\ldots,2n$.
\end{theorem}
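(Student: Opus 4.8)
The plan is to treat this as a concrete, constructive statement about $\HH^n$ rather than invoking the abstract machinery of homogeneous Carnot groups, and then remark on what changes in the general case of \cite{Bonfiglioli}. The structural reason the theorem holds is that the horizontal layer $\mathrm{span}\{X_1,\dots,X_{2n}\}$ Lie-generates the whole Lie algebra of $\HH^n$ (this is the stratification; concretely $[X_i,Y_i]=-4T$), so every $h\in\HH^n$ is a concatenation of finitely many integral curves of the fields $X_1,\dots,X_{2n}$; the whole content is the quantitative estimate $d(h_j)\le c_0\,d(h)$ together with a number of factors $M$ independent of $h$. First I would record the elementary fact, part of the statement, that the flow of a left-invariant horizontal field issued from the identity is a coordinate segment: a one-line ODE check gives $\mathrm{Exp}(tX_i)=t\,e_i$ and $\mathrm{Exp}(tY_i)=t\,e_{n+i}$, because along such an orbit the drift coefficient in the $\partial_t$ direction vanishes. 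Hence each admissible factor $h_j$ is exactly a coordinate displacement $t_j e_{i_j}$ with $i_j\in\{1,\dots,2n\}$ and $d(h_j)=|t_j|$.

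Next I would split the target $h=(x,y,t)$ into a horizontal part and a vertical correction. Composing the $2n$ elementary displacements
\[
(x_1 e_1)\circ\cdots\circ(x_n e_n)\circ(y_1 e_{n+1})\circ\cdots\circ(y_n e_{2n}),
\]
and tracking the group law $\xi\circ\xi'=(x+x',y+y',t+t'+2\langle y,x'\rangle-2\langle x,y'\rangle)$, one checks that all quadratic corrections collapse to the single term $-2\langle x,y\rangle$, so this product equals $(x,y,-2\langle x,y\rangle)$. It then remains to supply the purely vertical displacement $\tau:=t+2\langle x,y\rangle$, since $(x,y,-2\langle x,y\rangle)\circ(0,0,\tau)=(x,y,t)$. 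For this I would use the standard commutator word in $X_1$ and $Y_1=X_{n+1}$: a direct computation gives
\[
(\epsilon e_1)\circ(\delta e_{n+1})\circ(-\epsilon e_1)\circ(-\delta e_{n+1})=(0,0,-4\epsilon\delta),
\]
so choosing $\epsilon,\delta$ with $-4\epsilon\delta=\tau$ and $|\epsilon|=|\delta|=\tfrac12\sqrt{|\tau|}$ realises the required vertical shift (taking $\epsilon=\delta=0$ if $\tau=0$). Appending these four factors to the $2n$ horizontal ones yields the decomposition \eqref{eq h} with $M=2n+4$, which depends only on $\HH^n$.

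Finally I would check the gauge bounds. For the $2n$ horizontal factors, $d(h_j)=|x_i|$ or $|y_i|$, and both are $\le|z|\le d(h)$ because $d(h)^4=|z|^4+t^2$. For the four commutator factors, $d(h_j)=|\epsilon|=|\delta|=\tfrac12\sqrt{|\tau|}$, and $|\tau|\le|t|+2|\langle x,y\rangle|\le|t|+|z|^2\le 2\,d(h)^2$, whence $d(h_j)\le d(h)/\sqrt2$; so \eqref{eq h} holds with $c_0=1$. The only genuinely delicate points here are the two explicit group-law computations, and those are short; everything else is bookkeeping. I expect the real obstacle to be the general-Carnot-group version underlying \cite{Bonfiglioli}, where no such closed formulas exist: there one must instead combine the Chow connectivity theorem (which makes a suitable concatenation map $\Phi(t_1,\dots,t_M)=\prod_j\mathrm{Exp}(t_j X_{i_j})$ open near the identity) with the dilation homogeneity $\delta_r\circ\Phi(t_1,\dots,t_M)=\Phi(rt_1,\dots,rt_M)$, valid because each factor lies in the first stratum, and with compactness of the unit gauge sphere to upgrade openness to a uniform bound; that quantitative open-mapping step is where the work lies in the general setting.
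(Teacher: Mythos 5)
Your proof is correct, but it takes a genuinely different route from the paper: the paper does not prove Theorem \ref{thm 5.2} at all, it simply cites the general homogeneous-Carnot-group result in \cite{Bonfiglioli}, whereas you give a fully explicit construction special to $\HH^n$. Your two group-law computations check out: the ordered product of the $2n$ coordinate displacements is $(x,y,-2\langle x,y\rangle)$, the commutator word $(\epsilon e_1)\circ(\delta e_{n+1})\circ(-\epsilon e_1)\circ(-\delta e_{n+1})$ equals $(0,0,-4\epsilon\delta)$, and since $(0,0,\tau)$ is central the order of appending it is immaterial; the gauge bounds $|x_i|,|y_i|\le|z|\le d(h)$ and $\tfrac12\sqrt{|\tau|}\le\tfrac12\sqrt{|t|+|z|^2}\le d(h)/\sqrt2$ give $M=2n+4$ and $c_0=1$. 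What this buys is transparency and sharp, computable constants; what the citation buys is uniformity over all homogeneous Carnot groups, where (as you correctly note) one must instead combine quantitative Chow connectivity with dilation homogeneity and compactness of the gauge sphere.

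One caveat worth recording, since the theorem is consumed downstream through Remark \ref{remark 3}: that remark needs every factor $h_j$ of the decomposition to lie in $\{x_1\ge 0\}$ whenever $h$ does, and its argument tacitly assumes there is only one factor in the $e_1$ direction. Your decomposition has three $e_1$-factors ($x_1e_1$, $\epsilon e_1$, $-\epsilon e_1$), two of which have opposite signs, so it does \emph{not} have this property and could not be substituted for the cited decomposition without breaking Lemma \ref{lemma 5.3}. For $n\ge 2$ this is easily repaired by running the commutator word in the $(x_2,y_2)$-variables (since $[X_2,Y_2]=-4T$ as well), leaving $x_1e_1$ as the unique $e_1$-factor; for $n=1$ no such escape is available and one would have to fall back on the decomposition of \cite{Bonfiglioli} (or rework Remark \ref{remark 3}). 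This does not affect the correctness of your proof of the theorem as stated, but it is the one place where the choice of decomposition matters for the rest of the paper.
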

\begin{remark}\label{remark 3}
\emph{Let $h=(h_{x_1},\ldots,h_{y_n},h_t)\in\{\xi\in\HH^n:x_1>0\}$ and $h_j=(h_{j,x_1},\ldots,h_{j,y_n},h_{j,t})$. If $h_j=t_je_{i_j}$, with $e_{i_j}\neq e_1$, we must have $h_{j,x_1}=0$. Now, let $h_{j_0}=t_{j_0}e_1$ for some $j_0$. Comparing the first components of both sides of \eqref{eq h}, we get $h_{j_0,x_1}>0$. Thus, we have $h_j\in\{\xi\in\HH^n:x_1\geq0\}$ in \eqref{eq h}, for all $j=1,\ldots,M$.}
\end{remark}

\begin{lemma}\label{lemma 5.3}
Let $1<p<\infty$, $0<s<1$ and $0\leq\alpha<\min\{Q-2,p(1-s)\}$. For any $R_0>0$ and for $\beta>-\frac{1}{p-1}$
we have $u_{\beta,R_0}\in W^{s,p,\alpha}_{\mathrm{loc}(x_1)}(\HH^n_+)\cap L_{sp,\alpha}^{p-1}(\HH^n)$.
\end{lemma}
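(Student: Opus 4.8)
The plan is to verify the two memberships in turn; the one in $L_{sp,\alpha}^{p-1}(\HH^n)$ (together with the auxiliary $L^p_{\mathrm{loc}(x_1)}$-condition) is elementary, so I will dispose of it first and then concentrate on the seminorm. Since $u_{\beta,R_0}=u_\beta\psi_{R_0}$ is supported in the bounded set $D_{R_0+1,t}$ and $0\le\psi_{R_0}\le1$, on that set $|u_{\beta,R_0}(\xi)|^{p-1}\le {x_1}_+^{\beta(p-1)}$ while the weight $(1+d(\xi))^{-(Q+sp)}(1+|z|)^{-\alpha}\le1$; hence both $u_{\beta,R_0}\in L^{p-1}_{\mathrm{loc}}(\HH^n)$ and the finiteness of the defining integral of $L^{p-1}_{sp,\alpha}(\HH^n)$ reduce, after integrating the bounded $(\tilde x,t)$-variables, to the convergence of $\int_0^{R_0+1}x_1^{\beta(p-1)}\,dx_1$, which holds precisely because $\beta>-\frac1{p-1}$ makes $\beta(p-1)>-1$. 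The same crude bound, now with $x_1$ bounded below on $\mathscr{D}_{r,R}$, gives $\int_{\mathscr{D}_{r,R}}|u_{\beta,R_0}|^p\,d\xi<\infty$ for every $0<r<R$, i.e.\ $u_{\beta,R_0}\in L^p_{\mathrm{loc}(x_1)}(\HH^n_+)$.

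The heart of the argument is a Lipschitz estimate for $u_{\beta,R_0}$ in the Koranyi gauge on each strip: fixing $0<r<R$ I will show
\[
|u_{\beta,R_0}(\xi)-u_{\beta,R_0}(\xi')|\le M c_0\,C(r,R)\,d({\xi}^{-1}\circ\xi')\qquad\text{for a.e. pair }\xi,\xi'\in\mathscr{D}_{r,R},
\]
with $C(r,R)$ the constant of Lemma \ref{lemma 5.2}. By symmetry of the seminorm one may take $x_1(\xi)\le x_1(\xi')$, so $h:={\xi}^{-1}\circ\xi'$ has nonnegative first coordinate and hence $h\in\HH^n_+$ for a.e.\ such pair; Theorem \ref{thm 5.2} then writes $h=h_1\circ\cdots\circ h_M$ with $h_j=\mathrm{Exp}(t_jX_{i_j})$, $i_j\in\{1,\dots,2n\}$, $|t_j|=d(h_j)\le c_0 d(h)$, and by Remark \ref{remark 3} each $h_j\in\{x_1\ge0\}$, which forces $t_j\ge0$ whenever $i_j=1$. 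Joining $\xi$ to $\xi'$ through $\xi^{(j)}=\xi\circ h_1\circ\cdots\circ h_j$ produces a chain of integral curves of the left-invariant fields $X_{i_j}$, the $j$-th of gauge-length $|t_j|\le c_0 d(h)$; only the $X_1$-curves move the $x_1$-coordinate, and move it monotonically upward, so along the entire broken path $x_1$ increases from $x_1(\xi)$ to $x_1(\xi')$ and the path stays inside $\mathscr{D}_{r,R}\subset\HH^n_+$, where $u_{\beta,R_0}$ is smooth with $|\nabla_{\HH^n}u_{\beta,R_0}|\le C(r,R)$. Integrating $X_{i_j}u_{\beta,R_0}$ along the $j$-th curve and summing over the $M$ steps yields the claimed bound.

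It then remains to bound $[u_{\beta,R_0}]_{s,p,\alpha,\mathscr{D}_{r,R}}^p$. I will split the double integral at $d({\xi}^{-1}\circ\xi')=1$, using in both regions that the integrand vanishes unless $\xi$ or $\xi'$ lies in $D_{R_0+1,t}$ (so, by symmetry, the outer variable may be confined to the bounded set $D_{R_0+1,t}\cap\mathscr{D}_{r,R}$) together with the change of variable $\tilde\xi={\xi}^{-1}\circ\xi'$, which preserves the Haar measure, the gauge and $|z-z'|=|\tilde z|$. On $\{d\ge1\}$ the inequality $|a-b|^p\le 2^{p-1}(|a|^p+|b|^p)$ reduces matters to $\int_{B(0,1)^C}\frac{d\tilde\xi}{d(\tilde\xi)^{Q+sp}|\tilde z|^\alpha}$, finite by the estimate for the integral $I_3$ in \cite[Proposition~3.1]{Adi-Mallick}; on $\{d<1\}$ the Lipschitz estimate of the previous paragraph absorbs $p$ powers of $d$, leaving $\int_{B(0,1)}\frac{d\tilde\xi}{d(\tilde\xi)^{Q+sp-p}|\tilde z|^\alpha}$, which, since $d(\tilde\xi)^{-(Q+sp-p)}\le d(\tilde\xi)^{-(Q+sp)}$ on $B(0,1)$ and $B(0,1)\subset\{|z|<1,|t|<1\}$, is dominated by the integral $J$ of \cite[Proposition~3.1]{Adi-Mallick}, finite exactly under $\alpha<\min\{Q-2,p(1-s)\}$. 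Combining these gives $[u_{\beta,R_0}]_{s,p,\alpha,\mathscr{D}_{r,R}}<\infty$ for every $0<r<R$, completing the proof.

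I expect the Lipschitz step to be the only genuine obstacle: the oscillation of $u_{\beta,R_0}$ must be controlled by the gauge $d$ rather than by the Euclidean distance, and the geometric decomposition of Theorem \ref{thm 5.2} together with Remark \ref{remark 3} is exactly what keeps the connecting chain inside the half-space strip $\mathscr{D}_{r,R}$, where Lemma \ref{lemma 5.2} is available. Everything else is a routine dyadic splitting of the seminorm against the integrability thresholds already recorded in \cite{Adi-Mallick}.
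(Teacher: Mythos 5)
Your proof is correct and, for the crux of the lemma --- the near-diagonal part of the seminorm --- it follows the same route as the paper: decompose $h=\xi^{-1}\circ\xi'$ via Theorem \ref{thm 5.2}, use Remark \ref{remark 3} to keep the connecting chain in the half-space so that the $x_1$-coordinate moves monotonically and the chain stays in $\mathscr{D}_{r,R}$, and then invoke the gradient bound of Lemma \ref{lemma 5.2}. You package this as a pointwise gauge-Lipschitz estimate $|u_{\beta,R_0}(\xi)-u_{\beta,R_0}(\xi')|\leq Mc_0C(r,R)\,d(\xi^{-1}\circ\xi')$, whereas the paper keeps the chain estimate in integral form (a sum of integrals of $|\nabla_{\HH^n}u_{\beta,R_0}|^p$ along the segments) before applying the sup-bound; the two are equivalent here since Lemma \ref{lemma 5.2} gives a uniform bound anyway, and your version is slightly cleaner. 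Your treatment of the membership $u_{\beta,R_0}\in L^{p-1}_{sp,\alpha}(\HH^n)$ is genuinely simpler than the paper's: you exploit the compact support of $u_{\beta,R_0}$ to reduce everything to $\int_0^{R_0+1}x_1^{\beta(p-1)}dx_1<\infty$, while the paper estimates the full integral over $\HH^n_+$ against the decaying weight, which forces the $J_1,J_2$ splitting and an appeal to Lemma \ref{lemma 5.1}. Your shortcut is perfectly valid and avoids that machinery.

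One intermediate inequality in your last step is wrong as written, though the conclusion it is meant to justify is true. You bound $\int_{B(0,1)}d(\tilde\xi)^{-(Q+sp-p)}|\tilde z|^{-\alpha}d\tilde\xi$ by $\int_{\{|z|<1,|t|<1\}}d(\tilde\xi)^{-(Q+sp)}|\tilde z|^{-\alpha}d\tilde\xi$ using $d^{-(Q+sp-p)}\leq d^{-(Q+sp)}$ on $B(0,1)$; but the dominating integral diverges (radially it behaves like $\int_0^1\rho^{-1-sp}d\rho$), so this comparison proves nothing. No domination is needed: the integral you actually have, with exponent $Q+sp-p$, is precisely the integral $J$ of \cite[Proposition 3.1]{Adi-Mallick} that the paper invokes, and it is finite exactly when $\alpha<\min\{Q-2,p(1-s)\}$ (in polar coordinates the radial part is $\int_0^1\rho^{p-sp-\alpha-1}d\rho$ and the angular part requires $\alpha<Q-2$). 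Drop the comparison and cite that integral directly; everything else stands.
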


\begin{proof}
First, we show that $u_{\beta,R_0}\in W_{\mathrm{loc}(x_1)}^{s,p,\alpha}(\HH^n_+)$. Let $0<r<R$ and $\mathscr{D}_{r,R}\subset\HH^n_+$ be the set defined in \eqref{set D_r,R}. We have
\begin{multline}\label{eq 5.9}
[u_{\beta,R_0}]^p_{s,p,\alpha,\mathscr{D}_{r,R}}=\int_{\mathscr{D}_{r,R}}\int_{\mathscr{D}_{r,R}} \frac{|u_{\beta,R_0}(\xi)-u_{\beta,R_0}(\xi')|^p}{d({\xi}^{-1}\circ \xi')^{Q+sp}|z-z'|^\alpha}d\xi'd\xi\\
\leq \int_{\mathscr{D}_{r,R}}\int_{\mathscr{D}_{r,R}\cap B(\xi,1)} \frac{|u_{\beta,R_0}(\xi)-u_{\beta,R_0}(\xi')|^p}{d({\xi}^{-1}\circ \xi')^{Q+sp}|z-z'|^\alpha}d\xi'd\xi\\
+2^{p-1}\int_{\mathscr{D}_{r,R}}\int_{\mathscr{D}_{r,R}\cap B(\xi,1)^C} \frac{|u_{\beta,R_0}(\xi)|^p+|u_{\beta,R_0}(\xi')|^p}{d({\xi}^{-1}\circ \xi')^{Q+sp}|z-z'|^\alpha}d\xi'd\xi=:I_1+2^{p-1}I_2.
\end{multline}
The last integral can be written as
$$I_2=2\int_{\mathscr{D}_{r,R}}|u_{\beta,R_0}(\xi)|^p\int_{\mathscr{D}_{r,R}\cap B(\xi,1)^C} \frac{d\xi'}{d({\xi}^{-1}\circ \xi')^{Q+sp}|z-z'|^\alpha}d\xi.$$
This integral was shown to be finite in \cite{Adi-Mallick} (see $I_1$ in Proposition 3.1). It remains to prove $I_1<\infty$. By symmetry, $I_1$ can be written as
$$I_1=2\int_{\mathscr{D}_{r,R}}\int_{\{\xi'\in\,\mathscr{D}_{r,R}\cap B(\xi,1):x_1<x_1'\}} \frac{|u_{\beta,R_0}(\xi)-u_{\beta,R_0}(\xi')|^p}{d({\xi}^{-1}\circ \xi')^{Q+sp}|z-z'|^\alpha}d\xi'd\xi.$$
Changing the variable $\xi'$ by $\xi\circ\tilde{\xi}$, we have
\begin{equation}\label{eq 5.12}
I_1=2\int_{\mathscr{D}_{r,R}}\int_{\{\tilde{\xi}\in\,(\mathscr{D}_{r,R}-\xi)\cap B(0,1):\tilde{x}_1>0\}} \frac{|u_{\beta,R_0}(\xi)-u_{\beta,R_0}(\xi\circ\tilde{\xi})|^p}{d(\tilde{\xi})^{Q+sp}|\tilde{z}|^\alpha}d\tilde{\xi}d\xi,
\end{equation}
where $\mathscr{D}_{r,R}-\xi:=\{\xi^{-1}\circ\xi':\xi'\in\mathscr{D}_{r,R}\}$. By Theorem \ref{thm 5.2} and Remark \ref{remark 3}, we have
$$\tilde{\xi}=h_1\circ\ldots\circ h_M,$$
$$h_j\in\{\xi\in\HH^n:x_1\geq0\},\ \ d(h_j)\leq c_0 d(\tilde{\xi)},\ \ j=1,\ldots,M,$$
where $h_j=\mathrm{Exp}(t_jX_{i_j})$, for some $t_j\in\RR$ and $d(h_j)=|t_j|$. Proceeding similarly to $I_2$ in Proposition 3.1 in \cite{Adi-Mallick}, we get
$$|u_{\beta,R_0}(\xi)-u_{\beta,R_0}(\xi\circ\tilde{\xi})|^p\leq c_0 d(\tilde{\xi})\sum_{j=1}^M\int_0^1\left|\nabla_{\HH^n}^\xi u_{\beta,R_0}(\xi\circ h_1\circ\ldots\circ h_{j-1}\circ \mathrm{Exp}(t_jrX_{i_j})\right|^pdr$$
Substituting this in \eqref{eq 5.12}, we obtain
$$I_1\leq2c_0\sum_{j=1}^M\int_0^1\int_{\mathscr{D}_{r,R}}\left|\nabla_{\HH^n}^\xi u_{\beta,R_0}(\xi\circ h_1\circ\ldots\circ h_{j-1}\circ \mathrm{Exp}(t_jrX_{i_j})\right|^p\int_{B(0,1)} \frac{d\tilde{\xi}}{d(\tilde{\xi})^{Q+sp-p}|\tilde{z}|^\alpha}d\xi dr$$
The integral $\int_{B(0,1)} \frac{d\tilde{\xi}}{d(\tilde{\xi})^{Q+sp-p}|\tilde{z}|^\alpha}<\infty$ is shown in \cite{Adi-Mallick}  provided $\alpha<\min\{Q-2,p(1-s)\}$ (see integral $J$ in Proposition 3.1 in \cite{Adi-Mallick}), and $I_1<\infty$ follows from Lemma \ref{lemma 5.2}, since $\xi\circ h_1\circ\ldots\circ h_{j-1}\circ \mathrm{Exp}(t_jrX_{i_j})\\\in \mathscr{D}_{r,R}$ for all $j=1,\ldots,M$.\smallskip

Since $u_{\beta,R_0}\in L^\infty(\HH^n)$ for all $\beta\geq0$, it is enough to show $u_{\beta,R_0}\in L_{sp,\alpha}^{p-1}(\HH^n)$ only for $-\frac{1}{p-1}<\beta<0$. For such $\beta$, we have
\begin{multline*}
\int_{\HH^n}\frac{|u_{\beta,R_0}|^{p-1}}{(1+d(\xi))^{Q+sp}(1+|z|)^\alpha}d\xi\leq \int_{\HH^n_+}\frac{x_1^{\beta(p-1)}}{(1+(|z|^4+t^2)^\frac{1}{4})^{Q+sp}(1+|z|)^\alpha}dzdt\\
\leq \int_{\{(z,t)\in\HH^n_+:x_1>1\}}\frac{dzdt}{(1+(|z|^4+t^2)^\frac{1}{4})^{Q+sp}(1+|z|)^\alpha}\\
+\int_{R^{2n-1}}\int_{-\infty}^\infty\int_0^1\frac{x_1^{\beta(p-1)}}{(1+((x_1^2+|\tilde{x}|^2)^2+t^2)^\frac{1}{4})^{Q+sp}(1+(x_1^2+|\tilde{x}|^2)^\frac{1}{2})^\alpha}dx_1d\tilde{x}dt=J_1+J_2.
\end{multline*}
We clearly have
$$J_1\leq \int_{B(0,1)^C}\frac{dzdt}{(|z|^4+t^2)^\frac{Q+sp}{4}|z|^\alpha},$$
which is same as the integral $I_3$ in Proposition 3.1 in \cite{Adi-Mallick}. The integral $J_2$ can be estimated as follows:
\begin{multline}\label{eq 5.13}
J_2\leq \int_{R^{2n-1}}\int_{-\infty}^\infty\frac{d\tilde{x}dt}{(1+(|\tilde{x}|^4+t^2)^\frac{1}{4})^{Q+sp}(1+|\tilde{x}|)^\alpha}\int_0^1x_1^{\beta(p-1)}dx_1\\
\leq\int_{R^{2n-1}}\int_{-\infty}^\infty\frac{d\tilde{x}dt}{(1+|\tilde{x}|^4+t^2)^{\frac{Q+sp}{4}}(1+|\tilde{x}|^2)^{\frac{\alpha}{2}}}\int_0^1x_1^{\beta(p-1)}dx_1.
\end{multline}
The integral $\int_0^1x_1^{\beta(p-1)}dx_1$ is finite due to the choice of $\beta$. The remainder of the integral on the right-hand side of \eqref{eq 5.13} can be transformed into the integral \eqref{eq 5.7} of Lemma \ref{lemma 5.1} by applying the inequality $1+|\tilde{x}|^4\geq ((1/\sqrt{2})+|\tilde{x}/(2^\frac{1}{4})|^2)^2$ and performing a change of variables in $\tilde{x}$, respectively.
\end{proof}

For $0<\epsilon\ll1$ let us define the following set in $\HH^n$:
\begin{equation}
\mathcal{K}_\epsilon(\xi)=\left\{\xi'\in\HH^n: \min\left\{\frac{x_1}{1+\epsilon},(1+\epsilon)x_1\right\}\leq x_1'\leq\max\left\{\frac{x_1}{1+\epsilon},(1+\epsilon)x_1\right\}\right\}.
\end{equation}

\begin{theorem}\label{thm 5.3}
Let $1<p<\infty$, $0<s<1$, $0\leq\alpha<\min\{Q-2,p(1-s)\}$, and $R_0>0$. If 
$$-\frac{1}{p-1}<\beta<\frac{sp+\alpha}{p-1},$$
then the function $u_{\beta,R_0}$ is a weak supersolution of \eqref{eq L} for $D_{R_0}\cap\HH^n_+$, with $\lambda=C_{n,s,p,\alpha}\lambda(\beta,s',p)$, where $s'=s+\frac{\alpha}{p}<1$.
\end{theorem}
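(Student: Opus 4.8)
The plan is to check directly that $u_{\beta,R_0}$ satisfies the defining inequality \eqref{eq super-sub} (Definition \ref{def superharmonic}) of a weak supersolution to \eqref{eq L} on $\Omega=D_{R_0}\cap\HH^n_+$, by reducing the double integral over $\HH^n\times\HH^n$ to the one-dimensional model of Proposition \ref{prop 5.1} through a left translation and the integration formula of Lemma \ref{lemma 5.1}; the mechanism is that the weight $|z-z'|^\alpha$, once the $2n$ transverse variables are integrated out, turns the effective one-dimensional fractional order from $s$ into $s'=s+\tfrac{\alpha}{p}$.

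So fix a non-negative $\phi\in W^{s,p,\alpha}(\HH^n)$ with compact support $\mathcal O\subset D_{R_0}\cap\HH^n_+$. Since $\psi_{R_0}\equiv1$ on $D_{R_0}\supseteq\mathcal O$ we have $u_{\beta,R_0}=u_\beta$ on $\mathcal O$, and $x_1\ge r>0$ on $\mathcal O$ for some $r$. By Lemma \ref{lemma 5.3} the pair $(u_{\beta,R_0},\phi)$ is admissible in \eqref{eq super-sub}, and by Lemma \ref{Lemma 2.1} the integrand on the left-hand side of \eqref{eq super-sub} is absolutely integrable. Since $\{(\xi,\xi'):x_1=x_1'\}$ is Lebesgue-null in $\HH^n\times\HH^n$ and the sets $\mathcal K_\epsilon:=\{(\xi,\xi'):\xi'\in\mathcal K_\epsilon(\xi)\}$ decrease to it as $\epsilon\to0$, dominated convergence shows that the left-hand side of \eqref{eq super-sub} is the limit as $\epsilon\to0$ of the same integral over $(\HH^n\times\HH^n)\setminus\mathcal K_\epsilon$. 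On that set, when $\xi\in\mathcal O$ one has $d(\xi^{-1}\circ\xi')\ge|x_1-x_1'|\ge\tfrac{r\epsilon}{1+\epsilon}$ and likewise $|z-z'|\ge\tfrac{r\epsilon}{1+\epsilon}$; arguing as in Lemma \ref{lemma 3.1}, the two integrals obtained by expanding $\phi(\xi)-\phi(\xi')$ are separately finite, so Fubini's theorem and the $\xi\leftrightarrow\xi'$ symmetry (both $\mathcal K_\epsilon$ and the kernel are symmetric and $J_p$ is odd) give that this integral equals $2\int_{\mathcal O}\phi(\xi)\,G_\epsilon(\xi)\,d\xi$, where $G_\epsilon(\xi):=\int_{\HH^n\setminus\mathcal K_\epsilon(\xi)}\frac{J_p(u_{\beta,R_0}(\xi)-u_{\beta,R_0}(\xi'))}{d(\xi^{-1}\circ \xi')^{Q+sp}|z-z'|^\alpha}\,d\xi'$.

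I then bound $G_\epsilon(\xi)$ from below for fixed $\xi\in\mathcal O$. As $0\le u_{\beta,R_0}\le u_\beta$ on $\HH^n$ (because $0\le\psi_{R_0}\le1$ and $u_\beta\ge0$) while $u_{\beta,R_0}(\xi)=u_\beta(\xi)$, monotonicity of $J_p$ yields $G_\epsilon(\xi)\ge\int_{\HH^n\setminus\mathcal K_\epsilon(\xi)}\frac{J_p(u_\beta(\xi)-u_\beta(\xi'))}{d(\xi^{-1}\circ \xi')^{Q+sp}|z-z'|^\alpha}\,d\xi'$. In the latter integral I substitute $\tilde\xi=\xi^{-1}\circ\xi'$, so that $d\xi'=d\tilde\xi$, $d(\xi^{-1}\circ\xi')=d(\tilde\xi)$, $x_1'=x_1+\tilde x_1$, $|z-z'|=|\tilde z|$, and the condition $\xi'\notin\mathcal K_\epsilon(\xi)$ becomes $x_1+\tilde x_1\notin I_\epsilon(x_1)$. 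Since $u_\beta(\xi')$ depends on $\xi'$ only through $x_1'$, Tonelli's theorem together with Lemma \ref{lemma 5.1} (applied with $m=|\tilde x_1|$ and $\theta=sp$) integrates out the remaining $2n-1$ spatial variables and $\tilde t$, giving
\[
\int_{\HH^n\setminus\mathcal K_\epsilon(\xi)}\frac{J_p(u_\beta(\xi)-u_\beta(\xi'))}{d(\xi^{-1}\circ \xi')^{Q+sp}|z-z'|^\alpha}\,d\xi' =C_{n,s,p,\alpha}\int_{\RR\setminus I_\epsilon(x_1)}\frac{J_p(u_\beta(x_1)-u_\beta(\tau'))}{|\tau'-x_1|^{1+s'p}}\,d\tau',
\]
where $C_{n,s,p,\alpha}=2|S_{2n-2}|\mathcal I(Q-2,sp,\alpha)$ and $sp+\alpha=s'p$. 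Because $-\tfrac{1}{p-1}<\beta<\tfrac{sp+\alpha}{p-1}=\tfrac{s'p}{p-1}$, Proposition \ref{prop 5.1}(i) with parameters $(s',p)$ identifies the right-hand side as $\tfrac{1}{2}\,C_{n,s,p,\alpha}\,F_\epsilon(x_1)=\tfrac{1}{2}\,C_{n,s,p,\alpha}\,\lambda_\epsilon(\beta,s',p)\,\frac{J_p(u_\beta(\xi))}{x_1^{sp+\alpha}}$.

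Finally, since $\phi\ge0$ and $J_p(u_\beta(\xi))/x_1^{sp+\alpha}>0$ on $\mathcal O$, multiplying the lower bound for $G_\epsilon$ by $\phi$ and integrating keeps the inequality valid whatever the sign of $\lambda_\epsilon(\beta,s',p)$, so the left-hand side of \eqref{eq super-sub} over $(\HH^n\times\HH^n)\setminus\mathcal K_\epsilon$ is at least $C_{n,s,p,\alpha}\,\lambda_\epsilon(\beta,s',p)\int_{\mathcal O}\frac{J_p(u_\beta(\xi))\phi(\xi)}{x_1^{sp+\alpha}}\,d\xi$. Letting $\epsilon\to0$: the left-hand sides converge by dominated convergence; $\lambda_\epsilon(\beta,s',p)\to\lambda(\beta,s',p)$ by \eqref{eq beta}; and $\int_{\mathcal O}J_p(u_\beta)\,\phi\,x_1^{-(sp+\alpha)}\,d\xi$ is a fixed finite number. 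Since $u_{\beta,R_0}=u_\beta$ on $\mathcal O$, this is exactly \eqref{eq super-sub} with $\lambda=C_{n,s,p,\alpha}\,\lambda(\beta,s',p)$. The main obstacle is the reduction just described: the exclusion set $\mathcal K_\epsilon(\xi)$ has to be adapted to the variable $x_1'$, not to $d$, so that after the left translation and Lemma \ref{lemma 5.1} one lands precisely on the one-dimensional quantities $F_\epsilon$, $\lambda_\epsilon$ of Proposition \ref{prop 5.1}; and one must recognize that integrating the weight $|z-z'|^\alpha$ over the transverse directions is exactly what converts the order $s$ into $s'=s+\tfrac{\alpha}{p}$. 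The comparison $u_{\beta,R_0}\le u_\beta$, together with $u_{\beta,R_0}=u_\beta$ on $\mathcal O$, is the device that lets the admissible truncation $u_{\beta,R_0}$ inherit the supersolution property of the non-admissible model profile $u_\beta$.
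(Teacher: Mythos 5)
Your proposal is correct and follows essentially the same route as the paper's proof: the comparison $u_{\beta,R_0}\le u_\beta$ with equality on the support of $\phi$ plus monotonicity of $J_p$, the left translation and Lemma \ref{lemma 5.1} to integrate out the transverse variables (converting the order to $s'=s+\alpha/p$), the identification with $F_\epsilon$ and $\lambda_\epsilon(\beta,s',p)$ from Proposition \ref{prop 5.1}, and the symmetrized Fubini/dominated-convergence limit over the exclusion sets $\mathcal K_\epsilon$. The only differences are cosmetic (the order in which the pointwise lower bound and the Fubini reduction are carried out), and you track the factor of $2$ and the admissibility issues correctly.
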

\begin{proof}
Let $0<\epsilon\ll1$. For any $\xi\in D_{R_0}$, we have
$$\int_{\HH^n\setminus \mathcal{K}_\epsilon(\xi)}\frac{J_p(u_{\beta,R_0}(\xi)-u_{\beta,R_0}(\xi'))}{d({\xi}^{-1}\circ \xi')^{Q+sp}|z-z'|^\alpha}d\xi'=\int_{\HH^n\setminus (\xi^{-1}\circ\mathcal{K}_\epsilon(\xi))}\frac{J_p(u_{\beta,R_0}(\xi)-u_{\beta,R_0}(\xi\circ\xi'))}{d(\xi')^{Q+sp}|z'|^\alpha}d\xi'$$
Note that the set
$$\xi^{-1}\circ\mathcal{K}_\epsilon(\xi)=\left\{\xi'\in\HH^n: \min\left\{\frac{-\epsilon x_1}{1+\epsilon},\epsilon x_1\right\}\leq x_1'\leq\max\left\{\frac{-\epsilon x_1}{1+\epsilon},\epsilon x_1\right\}\right\},$$
and when $\xi\in D_{R_0}$ is such that $x_1<0$, then $u_{\beta,R_0}(\xi)-u_{\beta,R_0}(\xi\circ\xi')=0$ for all $\xi'\in \xi^{-1}\circ\mathcal{K}_\epsilon(\xi)$. We also have
$$u_{\beta,R_0}(\xi)-u_{\beta,R_0}(\xi\circ\xi')\geq u_{\beta}(\xi)-u_{\beta}(\xi\circ\xi'),\ \ \forall \xi\in D_{R_0}.$$
Therefore, we can write
\begin{multline*}
\int_{\HH^n\setminus \mathcal{K}_\epsilon(\xi)}\frac{J_p(u_{\beta,R_0}(\xi)-u_{\beta,R_0}(\xi'))}{d({\xi}^{-1}\circ \xi')^{Q+sp}|z-z'|^\alpha}d\xi'\\
\geq\int_{\RR\setminus\mathcal{J}_\epsilon(x_1)}J_p(({x_1})_+^\beta-({x_1+x_1'})_+^\beta)\left(\int_{\RR^{2n-1}}\int_{-\infty}^\infty\frac{d\tilde{x}'dt'}{((x_1'^2+|\tilde{x}'|^2)^2+t'^2)^{\frac{Q+sp}{4}}(x_1'^2+|\tilde{x}'|)^{\frac{\alpha}{2}}}\right)dx_1',
\end{multline*}
where $\tilde{x}'=(x_2',\ldots,x_n',y_1',\ldots,y_n')\in\RR^{2n-1}$, and for $x_1>0$, $\mathcal{J}_\epsilon(x_1)$ is the interval $\left(\frac{-\epsilon x_1}{1+\epsilon},\epsilon x_1\right)$. Using Lemma \ref{lemma 5.1}, we get
\begin{multline}\label{eq 5.15}
\int_{\HH^n\setminus \mathcal{K}_\epsilon(\xi)}\frac{J_p(u_{\beta,R_0}(\xi)-u_{\beta,R_0}(\xi'))}{d({\xi}^{-1}\circ \xi')^{Q+sp}|z-z'|^\alpha}d\xi'\geq C_{n,s,p,\alpha}\int_{\RR\setminus\mathcal{J}_\epsilon(x_1)}\frac{J_p(({x_1})_+^\beta-({x_1+x_1'})_+^\beta)}{x_1'^{1+sp+\alpha}}dx_1'\\
=C_{n,s,p,\alpha}\int_{\RR\setminus\mathcal{I}_\epsilon(x_1)}\frac{J_p(({x_1})_+^\beta-({x_1'})_+^\beta)}{|x_1-x_1'|^{1+sp+\alpha}}dx_1'.
\end{multline}
The constant $C_{n,s,p,\alpha}$ and the set $\mathcal{I}_\epsilon(x_1)\subset\RR_+$ are defined in \eqref{eq C(n,s,p)} and \eqref{set I_ep}, respectively. Applying \eqref{eq F-ep} in \eqref{eq 5.15} with $s'=s+\frac{\alpha}{p}$, we obtain
$$2\int_{\HH^n\setminus \mathcal{K}_\epsilon(\xi)}\frac{J_p(u_{\beta,R_0}(\xi)-u_{\beta,R_0}(\xi'))}{d({\xi}^{-1}\circ \xi')^{Q+sp}|z-z'|^\alpha}d\xi'\geq C_{n,s,p,\alpha}\lambda_\epsilon(\beta,s',p)\frac{u_{\beta}(\xi)^{p-1}}{x_1^{sp+\alpha}}.$$
Let $0\leq\phi\in C_c^\infty(D_{R_0}\cap\HH^n_+)$. Multiply the above equation by $\phi(\xi)$ and then integrate over $\HH^n_+$, we get
\begin{equation}\label{eqn 5.15}
2\int_{\HH^n_+}\left(\int_{\HH^n\setminus \mathcal{K}_\epsilon(\xi)}\frac{J_p(u_{\beta,R_0}(\xi)-u_{\beta,R_0}(\xi'))}{d({\xi}^{-1}\circ \xi')^{Q+sp}|z-z'|^\alpha}d\xi'\right)\phi(\xi)d\xi\geq C_{n,s,p,\alpha}\lambda_\epsilon(\beta,s',p)\int_{\HH^n_+}\frac{u_{\beta}(\xi)^{p-1}}{x_1^{sp+\alpha}}\phi(\xi)d\xi.
\end{equation}
On the other hand, applying the Dominated Convergence Theorem (thanks to Lemma \ref{Lemma 2.1} and Lemma \ref{lemma 5.3}) we can write
\begin{multline*}
\int_{\HH^n}\int_{\HH^n}\frac{J_p(u_{\beta,R_0}(\xi)-u_{\beta,R_0}(\xi'))(\phi(\xi)-\phi(\xi'))}{d({\xi}^{-1}\circ \xi')^{Q+sp}|z-z'|^\alpha}d\xi'd\xi\\
=\lim_{\epsilon\rightarrow0}\iint_{(\HH^n\times\HH^n)\setminus\mathcal{C}_\epsilon}\frac{J_p(u_{\beta,R_0}(\xi)-u_{\beta,R_0}(\xi'))(\phi(\xi)-\phi(\xi'))}{d({\xi}^{-1}\circ \xi')^{Q+sp}|z-z'|^\alpha}d\xi'd\xi,
\end{multline*}
where
$$\mathcal{C}_\epsilon:=\{(\xi,\xi')\in\HH^n\times\HH^n:\xi'\in\mathcal{K}_\epsilon(\xi)\}.$$
In a similar way to the proof of Lemma \ref{lemma 3.1}, one can show that
$$\frac{J_p(u_{\beta,R_0}(\xi)-u_{\beta,R_0}(\xi'))}{d({\xi}^{-1}\circ \xi')^{Q+sp}|z-z'|^\alpha}\phi(\xi)\in L^1((\HH^n\times\HH^n)\setminus\mathcal{C}_\epsilon)\ \ \mathrm{for}\ 0<\epsilon\ll1.$$
Therefore, using Fubini's theorem, we can rewrite the above integral of the following form:
\begin{multline}\label{eq 5.16}
\int_{\HH^n}\int_{\HH^n}\frac{J_p(u_{\beta,R_0}(\xi)-u_{\beta,R_0}(\xi'))(\phi(\xi)-\phi(\xi'))}{d({\xi}^{-1}\circ \xi')^{Q+sp}|z-z'|^\alpha}d\xi'd\xi\\
=2\lim_{\epsilon\rightarrow0}\int_{\HH^n_+}\left(\int_{\HH^n\setminus \mathcal{K}_\epsilon(\xi)}\frac{J_p(u_{\beta,R_0}(\xi)-u_{\beta,R_0}(\xi'))}{d({\xi}^{-1}\circ \xi')^{Q+sp}|z-z'|^\alpha}d\xi'\right)\phi(\xi)d\xi.
\end{multline}
Combining \eqref{eqn 5.15} and \eqref{eq 5.16}, together with \eqref{eq beta}, we get the conclusion.
\end{proof}

Since we work with a cutoff function, let us define the following sharp constant for domains. For an open set $\Omega\subset\HH^n_+$, let us define the following quantity:
\begin{equation}
\mathcal{C}_{n,s,p,\alpha}(\Omega)=\underset{f\in C_c^\infty(\Omega)}{\inf}\left\{[f]_{s,p,\alpha,\HH^n}^p:\int_{\HH^n_+}\frac{|f(\xi)|^p}{x_1^{sp+\alpha}}d\xi=1\right\}.
\end{equation}
\begin{remark}
\emph{It is easy to see that $\mathcal{C}_{s,p,\alpha}(\HH^n_+)\leq \mathcal{C}_{n,s,p,\alpha}(\Omega)$.}
\end{remark}
\begin{lemma}\label{lemma 5.4}
Let $1<p<\infty$, $0<s<1$, $\alpha\geq0$, and $\Omega\subset\HH^n_+$ be an open set. If there exists $\lambda\geq0$ such that equation \eqref{eq L} admits a positive weak supersolution $f$, then $\lambda\leq \mathcal{C}_{n,s,p,\alpha}(\Omega)$.
\end{lemma}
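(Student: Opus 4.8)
The plan is to run the ground-state/Picone argument of \cite{Brasco2} in the Heisenberg setting: test the supersolution inequality \eqref{eq super-sub} against a regularized quotient of an arbitrary competitor by the supersolution $f$, and then remove the regularization. Fix an arbitrary $\phi\in C_c^\infty(\Omega)$ normalized by $\int_{\HH^n_+}\frac{|\phi(\xi)|^p}{x_1^{sp+\alpha}}\,d\xi=1$; by the definition of $\mathcal{C}_{n,s,p,\alpha}(\Omega)$ it suffices to prove $[\phi]_{s,p,\alpha,\HH^n}^p\ge\lambda$, and when $[\phi]_{s,p,\alpha,\HH^n}=+\infty$ this is vacuous, so assume the seminorm is finite. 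For $\epsilon>0$ set
$$\psi_\epsilon:=\frac{|\phi|^p}{(f+\epsilon)^{p-1}}=|\phi|^p\cdot(f+\epsilon)^{1-p}.$$
The first point is to check that $\psi_\epsilon$ is admissible as a test function in \eqref{eq super-sub}: it is non-negative and its support is contained in that of $\phi$, hence compact in $\Omega$; and $\psi_\epsilon\in W^{s,p,\alpha}(\HH^n)$ by Lemma \ref{lemma 3.2}, because $|\phi|^p\in W^{s,p,\alpha}(\HH^n)\cap L^\infty$ has compact support (using $\big||\phi(\xi)|^p-|\phi(\xi')|^p\big|\le p\lVert\phi\rVert_\infty^{p-1}|\phi(\xi)-\phi(\xi')|$), while $(f+\epsilon)^{1-p}$ is the composition of $f\in W^{s,p,\alpha}_{\mathrm{loc}(x_1)}(\HH^n_+)$ with the bounded, globally Lipschitz map $t\mapsto(t+\epsilon)^{1-p}$ on $[0,\infty)$ and therefore lies in $W^{s,p,\alpha}_{\mathrm{loc}(x_1)}(\HH^n_+)\cap L^\infty(\HH^n_+)$; the finiteness of the part of $[\psi_\epsilon]_{s,p,\alpha,\HH^n}$ coming from pairs with one point far from $\mathrm{supp}\,\phi$ is handled as in the proof of Lemma \ref{Lemma 2.1}.

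Inserting $\psi_\epsilon$ into \eqref{eq super-sub} (the left-hand double integral converging absolutely by Lemma \ref{Lemma 2.1}), I would estimate the integrand pointwise. Writing $a=f(\xi)+\epsilon$, $b=f(\xi')+\epsilon$, $c=|\phi(\xi)|$, $d=|\phi(\xi')|$, we have $J_p(f(\xi)-f(\xi'))=J_p(a-b)$ and $\psi_\epsilon(\xi)-\psi_\epsilon(\xi')=\frac{c^p}{a^{p-1}}-\frac{d^p}{b^{p-1}}$, so the classical discrete Picone inequality $J_p(a-b)\big(\frac{c^p}{a^{p-1}}-\frac{d^p}{b^{p-1}}\big)\le|c-d|^p$ (the sharp form underlying Lemma \ref{lemma 4.1}; see \cite{BraCin,Brasco2}) gives
$$J_p(f(\xi)-f(\xi'))\big(\psi_\epsilon(\xi)-\psi_\epsilon(\xi')\big)\le\big|\,|\phi(\xi)|-|\phi(\xi')|\,\big|^p\le|\phi(\xi)-\phi(\xi')|^p.$$
Dividing by $d(\xi^{-1}\circ\xi')^{Q+sp}|z-z'|^\alpha$, integrating over $\HH^n\times\HH^n$, and using $f>0$ (so $J_p(f)=f^{p-1}$), the left-hand side of \eqref{eq super-sub} is bounded above by $[\phi]_{s,p,\alpha,\HH^n}^p$, while its right-hand side equals $\lambda\int_{\HH^n_+}\frac{f(\xi)^{p-1}}{(f(\xi)+\epsilon)^{p-1}}\,\frac{|\phi(\xi)|^p}{x_1^{sp+\alpha}}\,d\xi$; hence
$$[\phi]_{s,p,\alpha,\HH^n}^p\ \ge\ \lambda\int_{\HH^n_+}\frac{f(\xi)^{p-1}}{(f(\xi)+\epsilon)^{p-1}}\,\frac{|\phi(\xi)|^p}{x_1^{sp+\alpha}}\,d\xi.$$

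It then remains to let $\epsilon\to0$. On $\mathrm{supp}\,\phi$, a compact subset of $\Omega\subset\HH^n_+$, the weight $x_1^{-(sp+\alpha)}$ is bounded, and $\frac{f^{p-1}}{(f+\epsilon)^{p-1}}\uparrow1$ pointwise a.e.\ because $f>0$; dominated convergence then shows that the right-hand side above tends to $\lambda\int_{\HH^n_+}\frac{|\phi(\xi)|^p}{x_1^{sp+\alpha}}\,d\xi=\lambda$. Therefore $[\phi]_{s,p,\alpha,\HH^n}^p\ge\lambda$ for every admissible $\phi$, and taking the infimum gives $\mathcal{C}_{n,s,p,\alpha}(\Omega)\ge\lambda$. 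I expect the only genuinely delicate step to be the verification that $\psi_\epsilon$ belongs to $W^{s,p,\alpha}(\HH^n)$ with compact support in $\Omega$ (the admissibility described above, where the Heisenberg structure and the weight $|z-z'|^{\alpha}$ enter through Lemmas \ref{Lemma 2.1} and \ref{lemma 3.2}); once this is in place, everything else is the same short computation as in the Euclidean case.
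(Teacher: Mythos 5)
Your proposal is correct and follows essentially the same route as the paper: both test \eqref{eq super-sub} with the quotient $|\phi|^p/(f+\epsilon)^{p-1}$ (admissibility via Lemma \ref{lemma 3.2}), bound the resulting double integral by $[\phi]_{s,p,\alpha,\HH^n}^p$ through a pointwise Picone-type estimate, and then let $\epsilon\to 0$. The only cosmetic differences are that the paper phrases the pointwise step via the Frank--Seiringer inequality $|a-\tau|^p\ge(1-\tau)^{p-1}(|a|^p-\tau)$ rather than the equivalent discrete Picone inequality you quote, and it concludes with Fatou's lemma where you use dominated convergence.
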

\begin{proof}
Let $\eta\in C_c^\infty(\Omega)$, and for $\epsilon>0$ define the function
$$\phi=\frac{|\eta|^p}{(f+\epsilon)^{p-1}}.$$
Using Lemma \ref{lemma 3.2} and a similar approach to \eqref{eq 4.4}, one can observe that $\phi$ is an admissible test function. Since $f$ is a positive weak supersolution to \eqref{eq L}, we have
$$\lambda\int_{\HH^n_+}\frac{f^{p-1}}{x_1^{sp+\alpha}}\frac{|\eta|^p}{(f+\epsilon)^{p-1}}d\xi\leq\int_{\HH^n}\int_{\HH^n}\frac{J_p(f(\xi)-f(\xi'))\left(\frac{|\eta|^p}{(f+\epsilon)^{p-1}}(\xi)-\frac{|\eta|^p}{(f+\epsilon)^{p-1}}(\xi')\right)}{d({\xi}^{-1}\circ \xi')^{Q+sp}|z-z'|^\alpha}d\xi' d\xi.$$
Using the elementary inequality $|a-\tau|^p\geq(1-\tau)^{p-1}(|a|^p-\tau)$ for $p\geq1$, $0\leq\tau\leq1$ and $a\in\RR$ (the proof can be found in \cite[Lemma 2.6]{Frank}), we obtain
$$\lambda\int_{\HH^n_+}\frac{f^{p-1}}{x_1^{sp+\alpha}}\frac{|\eta|^p}{(f+\epsilon)^{p-1}}d\xi\leq\int_{\HH^n}\int_{\HH^n}\frac{\Big||\eta(\xi)|-|\eta(\xi')|\Big|^p}{d({\xi}^{-1}\circ \xi')^{Q+sp}|z-z'|^\alpha}d\xi' d\xi\leq [|\eta|]_{s,p,\alpha,\HH^n}\leq [\eta]_{s,p,\alpha,\HH^n}.$$
Here we consider $a=\frac{|\eta(\xi)|}{|\eta(\xi)|}\frac{f(\xi')+\epsilon}{f(\xi)+\epsilon}$; by symmetry we can assume $f(\xi')\leq f(\xi)$ and we consider $\tau=\frac{f(\xi')}{f(\xi)}$. Finally, we apply Fatou's lemma and pass the limit $\epsilon\rightarrow0$.
The arbitrariness of the function $\eta$ concludes the proof.
\end{proof}
Now we prove the sharp constant result Theorem \ref{thm sharp C} for $\HH^n_+$.
\begin{proof}[\textbf{Proof of Theorem \ref{thm sharp C}}]
Let $s'=s+\frac{\alpha}{p}$. Lemma \ref{lemma 5.4} together with Theorem \ref{thm 5.3} gives
$$\mathcal{C}_{n,s,p,\alpha}(D_{R_0}\cap\HH^n_+)\geq C_{n,s,p,\alpha}\lambda(\beta,s',p),$$
for every $R_0>0$ and $-\frac{1}{p-1}<\beta<\frac{sp+\alpha}{p-1}$. Moreover, by Proposition \ref{prop 5.1}, $\lambda(\beta,s',p)$ is maximal at $\beta=\frac{s'p-1}{p}$, and the maximum value is $\Lambda_{s,p,\alpha}$. Therefore, we get
\begin{equation}\label{eq 5.19}
\mathcal{C}_{n,s,p,\alpha}(D_{R_0}\cap\HH^n_+)\geq C_{n,s,p,\alpha}\Lambda_{s,p,\alpha}.
\end{equation}\smallskip

For the reverse inequality, let us choose a test function in such a way that it can be reduced to the Euclidean one-dimensional case by applying Lemma \ref{lemma 5.1}.\smallskip

\noindent Let $\eta\in C_c^\infty((0,\infty))$. Choose $R_0>0$ such that $\mathrm{supp}\,\phi\subset(0,R_0)$. Let $\Psi\in C_c^\infty((-1,2)\times(-1,1)^{2n})$ be such that $0\leq\Psi\leq1$, $\Psi$ is increasing in $(-1,0)$ and decreasing in (1,2) with respect to $x_1$ variable, $\Psi(x_1,\tilde{x},t)=\Psi(\tilde{x},t)$ for $0\leq x_1\leq1$, where $\tilde{x}=(x_2,\ldots,x_n,y_1,\ldots,y_n)\in\RR^{2n-1}$, $t\in\RR$ and $\xi=(x_1,\tilde{x},t)$. We define the test function
$$\phi(\xi)=\eta(x_1)\Psi_{R_0}(\xi),$$
where $\Psi_{R_0}(\xi)=\frac{R_0^{-\frac{2n+1}{p}}}{\lVert\Psi(0,\cdot)\rVert_{L^p(\RR^{2n})}}\Psi(\delta_{\frac{1}{R_0}}\xi)$. It is easy to see that $\phi$ has compact support in $D_{R_0}\cap\HH^n_+$ and $\lVert\Psi_{R_0}(0,\cdot)\rVert_{L^p(\RR^{2n})}=1$. From the definition of $\mathcal{C}_{n,s,p,\alpha}(D_{R_0}\cap\HH^n_+)$, we get
$$\mathcal{C}_{n,s,p,\alpha}(D_{R_0}\cap\HH^n_+)\leq\frac{[\eta\Psi_{R_0}]_{s,p,\alpha,\HH^n}^p}{\int_{\HH^n_+}\frac{|\eta\Psi_{R_0}|^p}{x_1^{sp+\alpha}}d\xi}=\frac{[\eta\Psi_{R_0}]_{s,p,\alpha,\HH^n}^p}{\int_0^{R_0}\frac{|\eta(x_1)|^p}{x_1^{sp+\alpha}}dx_1\int_{\RR^{2n}}|\Psi_{R_0}(0,\tilde{x},t)|^pd\tilde{x}dt}=\frac{[\eta\Psi_{R_0}]_{s,p,\alpha,\HH^n}^p}{\int_0^\infty\frac{|\eta(x_1)|^p}{x_1^{sp+\alpha}}dx_1}.$$
Using Minkowski's inequality, we split the integral
\begin{multline*}
[\eta\Psi_{R_0}]_{s,p,\alpha,\HH^n}\\
=\left(\int_{\HH^n}\int_{\HH^n}\frac{|\eta(x_1)\Psi_{R_0}(\xi)-\eta(x_1')\Psi_{R_0}(\xi)+\eta(x_1')\Psi_{R_0}(\xi)-\eta(x_1')\Psi_{R_0}(\xi')|^p}{d({\xi}^{-1}\circ \xi')^{Q+sp}|z-z'|^\alpha}d\xi'd\xi\right)^{\frac{1}{p}}\\
\leq \left(\int_{\HH^n}\int_{\HH^n}\frac{|\Psi_{R_0}(\xi)|^p|\eta(x_1)-\eta(x_1')|^p}{d({\xi}^{-1}\circ \xi')^{Q+sp}|z-z'|^\alpha}d\xi'd\xi\right)^{\frac{1}{p}}\\
+\left(\int_{\HH^n}\int_{\HH^n}\frac{|\eta(x_1')|^p|\Psi_{R_0}(\xi)-\Psi_{R_0}(\xi')|^p}{d({\xi}^{-1}\circ \xi')^{Q+sp}|z-z'|^\alpha}d\xi'd\xi\right)^{\frac{1}{p}}=:I_1^{\frac{1}{p}}+I_2^{\frac{1}{p}}.
\end{multline*}
Applying Fubini's theorem and Lemma \ref{lemma 5.1}, we can estimate the integral $I_1$ as follows:
\begin{multline*}
I_1=\int_{\HH^n}\int_{\HH^n}\frac{|\Psi_{R_0}(\xi)|^p|\eta(x_1)-\eta(x_1+x_1')|^p}{d(\xi')^{Q+sp}|z'|^\alpha}d\xi'd\xi\\
\leq C_{n,s,p,\alpha}\int_{\RR^{2n}}|\Psi_{R_0}(0,\tilde{x},t)|^p\int_\RR\int_\RR\frac{|\eta(x_1)-\eta(x_1+x_1')|^p}{|x_1'|^{1+sp+\alpha}}dx_1'dx_1=C_{n,s,p,\alpha}[\eta]_{W^{s',p}(R)}^p.
\end{multline*}
Here we use the monotonic property of $\Psi$. Now we estimate the integral $I_2$ as follows:
$$I_2\leq \lVert\eta\rVert_{L^\infty(\RR)}^p\int_{\HH^n}\int_{\HH^n}\frac{|\Psi_{R_0}(\xi)-\Psi_{R_0}(\xi')|^p}{d({\xi}^{-1}\circ \xi')^{Q+sp}|z-z'|^\alpha}d\xi'd\xi.$$
Applying the changes of variable $\xi=\delta_{R_0}\Bar{\xi}$ and $\xi'=\delta_{R_0}\Bar{\xi}'$, we obtain
$$I_2\leq \lVert\eta\rVert_{L^\infty(\RR)}^p \frac{R_0^{-(2n+1)}}{\lVert\Psi(0,\cdot)\rVert_{L^p(\RR^{2n})}^p}\frac{R_0^{2Q}}{R_0^{Q+sp+\alpha}}[\Psi]_{s,p,\alpha,\HH^n}^p=CR_0^{1-sp-\alpha}.$$
Here the constant $C$ is independent to $R_0$. Therefore, we get
$$\mathcal{C}_{n,s,p,\alpha}(D_{R_0}\cap\HH^n_+)^{\frac{1}{p}}\leq
\frac{C_{n,s,p,\alpha}^{\frac{1}{p}}[\eta]_{W^{s',p}(R)}}{\left(\int_0^\infty\frac{|\eta(x_1)|^p}{x_1^{sp+\alpha}}dx_1\right)^{\frac{1}{p}}}+\frac{C^{\frac{1}{p}}R_0^{\frac{1-sp-\alpha}{p}}}{\left(\int_0^\infty\frac{|\eta(x_1)|^p}{x_1^{sp+\alpha}}dx_1\right)^{\frac{1}{p}}}.$$
Since $sp+\alpha>1$ and $\eta$ is arbitrary, using the one-dimensional Euclidean result \eqref{sharp c}, we obtain
\begin{equation}\label{eq 5.20}
\lim_{R_0\rightarrow\infty}\mathcal{C}_{n,s,p,\alpha}(D_{R_0}\cap\HH^n_+)\leq C_{n,s,p,\alpha}\Lambda_{s,p,\alpha}.
\end{equation}
The final conclusion follows from \eqref{eq 5.19} and \eqref{eq 5.20}, and from the following fact:
$$\lim_{R_0\rightarrow\infty}\mathcal{C}_{n,s,p,\alpha}(D_{R_0}\cap\HH^n_+)=\mathcal{C}_{s,p,\alpha}(\HH^n_+).$$

\end{proof}

\noindent\textbf{Acknowledgement:} The author thanks Prof. Rama Rawat for several discussions on this problem. He acknowledges the support of IIT Kanpur, India, for the Fellowship for Academic and Research Excellence (FARE).

\end{document}